\date{\today}
\newtheorem{theorem}{Theorem}[section]
\newtheorem{proposition}[theorem]{Proposition}
\newtheorem{corollary}[theorem]{Corollary}
\newtheorem{lemma}[theorem]{Lemma}
\theoremstyle{definition}
\newtheorem{example}[theorem]{Example}
\newtheorem{remark}[theorem]{Remark}
\begin{document}

\title[On the closure of the extended bicyclic semigroup]{On
the closure of the extended bicyclic semigroup}

\author[I. Fihel]{Iryna~Fihel}

\address{Department of Mechanics and Mathematics, Ivan Franko
National University of Lviv, Universytetska 1, Lviv, 79000, Ukraine}
\email{figel.iryna@gmail.com}

\author[O. Gutik]{Oleg~Gutik}

\address{Department of Mechanics and Mathematics, Ivan Franko
National University of Lviv, Universytetska 1, Lviv, 79000, Ukraine}
\email{o\_\,gutik@franko.lviv.ua, ovgutik@yahoo.com}

\keywords{Topological semigroup, semitopological semigroup,
topological inverse semigroup, bicyclic semigroup, closure, locally
compact space, ideal, group of units.}

\subjclass[2010]{22A15, 20M18, 20M20, 54H15}

\begin{abstract}
In the paper we study the semigroup $\mathscr{C}_{\mathbb{Z}}$ which
is a generalization of the bicyclic semigroup. We describe main
algebraic properties of the semigroup $\mathscr{C}_{\mathbb{Z}}$ and
prove that every non-trivial congruence $\mathfrak{C}$ on the
semigroup $\mathscr{C}_{\mathbb{Z}}$ is a group congruence, and
moreover the quotient semigroup
$\mathscr{C}_{\mathbb{Z}}/\mathfrak{C}$ is isomorphic to a cyclic
group. Also we show that the semigroup $\mathscr{C}_{\mathbb{Z}}$ as
a Hausdorff semitopological semigroup admits only the discrete
topology. Next we study the closure
$\operatorname{cl}_T\left(\mathscr{C}_{\mathbb{Z}}\right)$ of the
semigroup $\mathscr{C}_{\mathbb{Z}}$ in a topological semigroup $T$.
We show that the non-empty remainder of $\mathscr{C}_{\mathbb{Z}}$
in a topological inverse semigroup $T$ consists of a group of units
$H(1_T)$ of $T$ and a two-sided ideal $I$ of $T$ in the case when
$H(1_T)\neq\varnothing$ and $I\neq\varnothing$. In the case when $T$
is a locally compact topological inverse semigroup and
$I\neq\varnothing$ we prove that an ideal $I$ is topologically
isomorphic to the discrete additive group of integers and describe
the topology on the subsemigroup $\mathscr{C}_{\mathbb{Z}}\cup I$.
Also we show that if the group of units $H(1_T)$ of the semigroup
$T$ is non-empty, then $H(1_T)$ is either singleton or $H(1_T)$ is
topologically isomorphic to the discrete additive group of integers.
\end{abstract}

\maketitle


\section{Introduction and preliminaries}

In this paper all topological spaces are assumed to be Hausdorff. We
shall follow the terminology of~\cite{CHK, CP, Engelking1989,
GHKLMS}. If $Y$ is a subspace of a topological space $X$ and
$A\subseteq Y$, then by $\operatorname{cl}_Y(A)$ we shall denote the
topological closure of $A$ in $Y$.  We denote by $\mathbb{N}$ the
set of positive integers.

An algebraic semigroup $S$ is called {\it inverse} if for any
element $x\in S$ there exists the unique $x^{-1}\in S$ such that
$xx^{-1}x=x$ and $x^{-1}xx^{-1}=x^{-1}$. The element $x^{-1}$ is
called the {\it inverse of} $x\in S$. If $S$ is an inverse
semigroup, then the function $\operatorname{inv}\colon S\to S$ which
assigns to every element $x$ of $S$ its inverse element $x^{-1}$ is
called an {\it inversion}.

A congruence $\mathfrak{C}$ on a semigroup $S$ is called
\emph{non-trivial} if $\mathfrak{C}$ is distinct from universal and
identity congruence on $S$, and \emph{group} if the quotient
semigroup $S/\mathfrak{C}$ is a group.

If $S$ is a semigroup, then we shall denote the subset of
idempotents in $S$ by $E(S)$. If $S$ is an inverse semigroup, then
$E(S)$ is closed under multiplication and we shall refer to $E(S)$ a
\emph{band} (or the \emph{band of} $S$). If the band $E(S)$ is a
non-empty subset of $S$, then the semigroup operation on $S$
determines the following partial order $\leqslant$ on $E(S)$:
$e\leqslant f$ if and only if $ef=fe=e$. This order is called the
{\em natural partial order} on $E(S)$. A \emph{semilattice} is a
commutative semigroup of idempotents. A semilattice $E$ is called
{\em linearly ordered} or a \emph{chain} if its natural order is a
linear order.

Let $E$ be a semilattice and $e\in E$. We denote ${\downarrow} e=\{
f\in E\mid f\leqslant e\}$ and ${\uparrow} e=\{ f\in E\mid
e\leqslant f\}$.

If $S$ is a semigroup, then we shall denote by $\mathscr{R}$,
$\mathscr{L}$, $\mathscr{D}$ and $\mathscr{H}$ the Green relations
on $S$ (see \cite{CP}):
\begin{align*}
    &\qquad a\mathscr{R}b \mbox{ if and only if } aS^1=bS^1;\\
    &\qquad a\mathscr{L}b \mbox{ if and only if } S^1a=S^1b;\\
    &\qquad a\mathscr{J}b \mbox{ if and only if } S^1aS^1=S^1bS^1;\\
    &\qquad \mathscr{D}=\mathscr{L}\circ\mathscr{R}=\mathscr{R}
    \circ\mathscr{L};\\
    &\qquad \mathscr{H}=\mathscr{L}\cap\mathscr{R}.
\end{align*}

A semigroup $S$ is called \emph{simple} if $S$ does not contain
proper two-sided ideals and \emph{bisimple} if $S$ has only one
$\mathscr{D}$-class.

A {\it semitopological} (resp. \emph{topological}) {\it semigroup}
is a Hausdorff topological space together with a separately (resp.
jointly) continuous semigroup operation \cite{CHK, Ruppert1984}. An
inverse topological semigroup with the continuous inversion is
called a \emph{topological inverse semigroup}. A topology $\tau$ on
a (inverse) semigroup $S$ which turns $S$ to be a topological
(inverse) semigroup is called a (\emph{inverse}) \emph{semigroup
topology} on $S$.

An element $s$ of a topological semigroup $S$ is called
\emph{topologically periodic} if for every open neighbourhood $U(s)$
of $s$ in $S$ there exists a positive integer $n\geqslant 2$ such
that $s^n\in U(s)$. Obviously, if there exists a subgroup $H(e)$
with a neutral element $e$ in $S$, then $s\in H(e)$ is topologically
periodic if and only if for every open neighbourhood $U(e)$ of $e$
in $S$ there exists a positive integer $n$ such that $s^n\in U(e)$.

The bicyclic semigroup ${\mathscr{C}}(p,q)$ is the semigroup with
the identity $1$ generated by elements $p$ and $q$ subject only to
the condition $pq=1$. The distinct elements of ${\mathscr{C}}(p,q)$
are exhibited in the following useful array:
\begin{equation*}
\begin{array}{ccccc}
  1      & p      & p^2    & p^3    & \cdots \\
  q      & qp     & qp^2   & qp^3   & \cdots \\
  q^2    & q^2p   & q^2p^2 & q^2p^3 & \cdots \\
  q^3    & q^3p   & q^3p^2 & q^3p^3 & \cdots \\
  \vdots & \vdots & \vdots & \vdots & \ddots \\
\end{array}
\end{equation*}
The bicyclic semigroup is bisimple and every one of its congruences
is either trivial or a group congruence. Moreover, every
non-annihilating homomorphism $h$ of the bicyclic semigroup is
either an isomorphism or the image of ${\mathscr{C}}(p,q)$ under $h$
is a cyclic group~(see \cite[Corollary~1.32]{CP}). The bicyclic
semigroup plays an important role in algebraic theory of semigroups
and in the theory of topological semigroups. For example the
well-known Andersen's result~\cite{Andersen} states that a
($0$--)simple semigroup is completely ($0$--)simple if and only if
it does not contain the bicyclic semigroup. The bicyclic semigroup
admits only the discrete semigroup topology and a topological
semigroup $S$ can contain the bicyclic semigroup ${\mathscr C}(p,q)$
as a dense subsemigroup only as an open
subset~\cite{EberhartSelden1969}. Also Bertman and West in
\cite{BertmanWest1976} proved that the bicyclic semigroup as a
Hausdorff semitopological semigroup admits only the discrete
topology. The problem of an embedding of the bicycle semigroup into
compact-like topological semigroups solved in the papers \cite{AHK,
BanakhDimitrovaGutik2009, BanakhDimitrovaGutik2010, GutikRepovs2007,
HildebrantKoch1988} and the closure of the bicycle semigroup in
topological semigroups studied in~\cite{EberhartSelden1969}.

Let $\mathbb{Z}$ be the additive group of integers. On the Cartesian
product $\mathscr{C}_{\mathbb{Z}}=\mathbb{Z}\times\mathbb{Z}$ we
define the semigroup operation as follows:
\begin{equation}\label{f1}
    (a,b)\cdot(c,d)=
\left\{
  \begin{array}{ll}
    (a-b+c,d), & \hbox{if }~b<c; \\
    (a,d),     & \hbox{if }~b=c; \\
    (a,d+b-c), & \hbox{if }~b>c,\\
  \end{array}
\right.
\end{equation}
for $a,b,c,d\in\mathbb{Z}$. The set $\mathscr{C}_{\mathbb{Z}}$ with
such defined operation is called the \emph{extended bicycle
semigroup} \cite{Warne1968}.

In this paper we study the semigroup $\mathscr{C}_{\mathbb{Z}}$. We
describe main algebraic properties of the semigroup
$\mathscr{C}_{\mathbb{Z}}$ and prove that every non-trivial
congruence $\mathfrak{C}$ on the semigroup
$\mathscr{C}_{\mathbb{Z}}$ is a group congruence, and moreover the
quotient semigroup $\mathscr{C}_{\mathbb{Z}}/\mathfrak{C}$ is
isomorphic to a cyclic group. Also we show that the semigroup
$\mathscr{C}_{\mathbb{Z}}$ as a Hausdorff semitopological semigroup
admits only the discrete topology. Next we study the closure
$\operatorname{cl}_T\left(\mathscr{C}_{\mathbb{Z}}\right)$ of the
semigroup $\mathscr{C}_{\mathbb{Z}}$ in a topological semigroup $T$.
We show that the non-empty remainder of $\mathscr{C}_{\mathbb{Z}}$
in a topological inverse semigroup $T$ consists of a group of units
$H(1_T)$ of $T$ and a two-sided ideal $I$ of $T$ in the case when
$H(1_T)\neq\varnothing$ and $I\neq\varnothing$. In the case when $T$
is a locally compact topological inverse semigroup and
$I\neq\varnothing$ we prove that an ideal $I$ is topologically
isomorphic to the discrete additive group of integers and describe
the topology on the subsemigroup $\mathscr{C}_{\mathbb{Z}}\cup I$.
Also we show that if the group of units $H(1_T)$ of the semigroup
$T$ is non-empty, then $H(1_T)$ is either singleton or $H(1_T)$ is
topologically isomorphic to the discrete additive group of integers.

\section{Algebraic properties of the semigroup
$\mathscr{C}_{\mathbb{Z}}$}

\begin{proposition}\label{proposition-1} The following statements
hold:
\begin{itemize}
  \item[$(i)$] $E(\mathscr{C}_{\mathbb{Z}})=\{(a,a)\mid
   a\in\mathbb{Z}\}$, and $(a,a)\leqslant(b,b)$ in
   $E(\mathscr{C}_{\mathbb{Z}})$ if and only if $a\geqslant b$
   in~$\mathbb{Z}$, and hence $E(\mathscr{C}_{\mathbb{Z}})$ is
   isomorphic to the linearly ordered semilattice
   $(\mathbb{Z},\max)$;

  \item[$(ii)$] $\mathscr{C}_{\mathbb{Z}}$ is an inverse semigroup,
   and the elements $(a,b)$ and $(b,a)$ are inverse in
   $\mathscr{C}_{\mathbb{Z}}$;

  \item[$(iii)$] for any idempotents
   $e,f\in\mathscr{C}_{\mathbb{Z}}$ there exists
   $x\in\mathscr{C}_{\mathbb{Z}}$ such that $x\cdot x^{-1}=e$ and
   $x^{-1}\cdot x=f$;

  \item[$(iv)$] elements $(a,b)$ and $(c,d)$ of the semigroup
   $\mathscr{C}_{\mathbb{Z}}$ are:
   \begin{itemize}
     \item[$(a)$] $\mathscr{R}$-equivalent if and only if $a=c$;
     \item[$(b)$] $\mathscr{L}$-equivalent if and only if $b=d$;
     \item[$(c)$] $\mathscr{H}$-equivalent if and only if $a=c$ and
      $b=d$;
     \item[$(d)$] $\mathscr{D}$-equivalent for all
      $a,b,c,d\in\mathbb{Z}$;
     \item[$(e)$] $\mathscr{J}$-equivalent for all
      $a,b,c,d\in\mathbb{Z}$;
   \end{itemize}

  \item[$(v)$] $\mathscr{C}_{\mathbb{Z}}$ is a bisimple semigroup
   and hence it is simple;

  \item[$(vi)$] if $(a,b)\cdot(c,d)=(x,y)$ in
   $\mathscr{C}_{\mathbb{Z}}$ then $x-y=a-b+c-d$.

  \item[$(vii)$] every maximal subgroup of $\mathscr{C}_{\mathbb{Z}}$
   is trivial.

  \item[$(viii)$] for every integer $n$ the subsemigroup
   $\mathscr{C}_{\mathbb{Z}}[n]=\{(a,b)\mid a\geqslant n\;\&\;
   b\geqslant n\}$ of $\mathscr{C}_{\mathbb{Z}}$ is isomorphic to
   the bicyclic semigroup ${\mathscr{C}}(p,q)$, and moreover an
   isomorphism $h\colon\mathscr{C}_{\mathbb{Z}}[n]\rightarrow
   {\mathscr{C}}(p,q)$ is defined by the formula
   $\left((a,b)\right)h=q^{a-n}p^{b-n}$;

  \item[$(ix)$] $\mathscr{L\!I}_{\mathscr{C}_{\mathbb{Z}}}=
   \{\mathscr{L}^a\mid a\in\mathbb{Z}\}$,
   where $\mathscr{L}^a=\{(x,y)\in\mathscr{C}_{\mathbb{Z}}\mid
   y\geqslant a\}$, is the family of
   all left ideals of the semigroup $\mathscr{C}_{\mathbb{Z}}$;

  \item[$(x)$] $\mathscr{R\!I}_{\mathscr{C}_{\mathbb{Z}}}=
   \{\mathscr{R}^a\mid a\in\mathbb{Z}\}$,
   where $\mathscr{R}^a=\{(x,y)\in\mathscr{C}_{\mathbb{Z}}\mid
   x\geqslant a\}$, is the family of
   all right ideals of the semigroup $\mathscr{C}_{\mathbb{Z}}$.
\end{itemize}
\end{proposition}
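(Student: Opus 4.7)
My plan is to handle the ten parts roughly in the order they appear, since the later items lean on the earlier ones. I would begin with (i) and (vi), both of which are pure case splits on the definition of multiplication in \eqref{f1}. Computing $(a,b)\cdot(a,b)$ in each of $b<a$, $b=a$, $b>a$ shows that the product returns $(a,b)$ exactly when $a=b$, and running the same three-case check for $(a,a)\cdot(b,b)$ pins down the natural partial order and identifies $E(\mathscr{C}_{\mathbb{Z}})$ with $(\mathbb{Z},\max)$. The identity $x-y=a-b+c-d$ of (vi) is then a one-line verification in all three cases of \eqref{f1}.

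Next I would do (ii) and (iii) simultaneously. The computations $(a,b)\cdot(b,a)=(a,a)$ and $(b,a)\cdot(a,b)=(b,b)$ are immediate from \eqref{f1}, and they do three jobs at once: they provide the required element $x=(a,b)$ in (iii) with $e=(a,a)$, $f=(b,b)$; they exhibit $(b,a)$ as an inverse of $(a,b)$; and, together with (i) (the idempotents form a chain and hence commute), they let me invoke the standard result \cite{CP} that a regular semigroup with commuting idempotents is inverse.

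For (iv) I would use the classical inverse-semigroup description of Green's relations: $x\mathscr{R}y$ iff $xx^{-1}=yy^{-1}$ and $x\mathscr{L}y$ iff $x^{-1}x=y^{-1}y$. Combined with $(a,b)(b,a)=(a,a)$ and $(b,a)(a,b)=(b,b)$, parts (a)--(c) fall out. For (d), part (iii) connects any two idempotents by some $x$, so all idempotents are $\mathscr{D}$-equivalent, and every element $(a,b)$ is $\mathscr{D}$-related to its associated idempotents $(a,a)$ and $(b,b)$. Item (e) follows from $\mathscr{D}\subseteq\mathscr{J}$. Parts (v) and (vii) are then mechanical: one $\mathscr{D}$-class yields bisimplicity, one $\mathscr{J}$-class yields simplicity, and in an inverse semigroup the maximal subgroups are exactly the $\mathscr{H}$-classes of idempotents, which by (iv)(c) are singletons.

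For (viii), the map $h$ is obviously bijective onto ${\mathscr{C}}(p,q)$; to verify it respects multiplication I would expand $q^{a-n}p^{b-n}\cdot q^{c-n}p^{d-n}$ using $pq=1$ and compare with $h((a,b)\cdot(c,d))$, where the three cases $b<c$, $b=c$, $b>c$ of \eqref{f1} mirror the reduction in the bicyclic semigroup exactly. Finally for (ix) and (x), I would first check that each $\mathscr{L}^a$ is a left ideal: in $(c,d)\cdot(x,y)$ the second coordinate is either $y$, or $y$, or $y+d-x\geq y$, so it remains $\geq a$. Conversely, for a proper left ideal $L$ and any $(x,y)\in L$, varying the left factor produces $(c,y')$ for every $c\in\mathbb{Z}$ and every $y'\geq y$ (choose $(c-x+d',d')\cdot(x,y)$ with $d'<x$ to adjust the first coordinate, and $(c,d')\cdot(x,y)$ with $d'>x$ to raise the second), so $L\supseteq\mathscr{L}^y$; hence $L=\mathscr{L}^a$ with $a=\min\{y:(x,y)\in L\text{ for some }x\}$, the minimum existing because the set of such $y$ is upward-closed in $\mathbb{Z}$. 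Right ideals are symmetric. The only real obstacle throughout is keeping the case splits organised; mathematically everything is routine once (i), (vi) and the inverse-semigroup machinery are in place.
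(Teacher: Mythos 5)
Your proof is correct and follows essentially the same route as the paper, which simply declares parts $(i)$--$(iv)$ and $(vi)$--$(viii)$ trivial, obtains $(v)$ from $(iii)$ via Munn's lemma (whose content you prove directly by exhibiting a single $\mathscr{D}$-class), and establishes $(ix)$--$(x)$ by the same computation of the principal left and right ideals. One minor point in $(ix)$: upward-closedness of $\{y\mid (x,y)\in L \text{ for some } x\}$ alone does not guarantee a minimum in $\mathbb{Z}$; you also need $L\neq\mathscr{C}_{\mathbb{Z}}$, which your restriction to proper left ideals already supplies, since that set being all of $\mathbb{Z}$ would force $L=\mathscr{C}_{\mathbb{Z}}$.
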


\begin{proof}
The proofs of statements $(i)$, $(ii)$, $(iii)$, $(iv)$, $(vi)$,
$(vii)$ and $(viii)$ are trivial. Statement $(v)$ follows from
statement $(iii)$ and Lemma~1.1 of \cite{Munn1966}.

Simple verifications (see: formula (\ref{f1})) show that
\begin{equation*}
    (a,b)\mathscr{C}_{\mathbb{Z}}=\left\{
    (x,y)\in\mathscr{C}_{\mathbb{Z}}\mid x\geqslant a\right\}
\qquad \hbox{ and } \qquad
    \mathscr{C}_{\mathbb{Z}}(a,b)=\left\{
    (x,y)\in\mathscr{C}_{\mathbb{Z}}\mid y\geqslant b\right\}
\end{equation*}
for every $(a,b)\in\mathscr{C}_{\mathbb{Z}}$. This completes the
proof of statements $(ix)$ and $(x)$.
\end{proof}

\begin{proposition}\label{proposition-2}
Every non-trivial congruence $\mathfrak{C}$ on the semigroup
$\mathscr{C}_{\mathbb{Z}}$ is a group congruence, and moreover the
quotient semigroup $\mathscr{C}_{\mathbb{Z}}/\mathfrak{C}$ is
isomorphic to a cyclic group.
\end{proposition}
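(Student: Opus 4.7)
The plan is to reduce to the known congruence theorem for the bicyclic semigroup via statement $(viii)$ of Proposition~\ref{proposition-1} and then to propagate one identification across all of $\mathscr{C}_{\mathbb{Z}}$ using the multiplication table~\eqref{f1}.

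First, since $\mathfrak{C}$ is non-trivial there exist distinct pairs $(a,b),(c,d)\in\mathscr{C}_{\mathbb{Z}}$ with $(a,b)\,\mathfrak{C}\,(c,d)$. I would choose any integer $n\leqslant\min\{a,b,c,d\}$, so that both pairs lie in the bicyclic subsemigroup $\mathscr{C}_{\mathbb{Z}}[n]$. The restriction $\mathfrak{C}\cap(\mathscr{C}_{\mathbb{Z}}[n]\times\mathscr{C}_{\mathbb{Z}}[n])$ is then a non-identity congruence on a copy of $\mathscr{C}(p,q)$, and by Corollary~1.32 of~\cite{CP} every such congruence is either universal or a group congruence with non-trivial cyclic quotient. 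In either case all idempotents of $\mathscr{C}_{\mathbb{Z}}[n]$ collapse into a single $\mathfrak{C}$-class; in particular $(n,n)\,\mathfrak{C}\,(n+1,n+1)$.

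The heart of the argument is to upgrade this single identification to the ``diagonal shift'' $(k,l)\,\mathfrak{C}\,(k+1,l+1)$ valid for every $k,l\in\mathbb{Z}$. I would do this in two moves dictated by~\eqref{f1}: right-multiplying $(n,n)\,\mathfrak{C}\,(n+1,n+1)$ by $(n,l)$ to obtain $(n,l)\,\mathfrak{C}\,(n+1,l+1)$ for every $l$, and then left-multiplying by $(k,n)$ to obtain $(k,l)\,\mathfrak{C}\,(k+1,l+1)$ for every $k,l$. Each step splits into two of the three cases of~\eqref{f1} (the $b=c$ case and one of the strict inequalities), and this is the one spot in the proof where the case analysis must be done carefully.

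Once the diagonal shift is in hand, the remainder is formal. By $(vi)$ of Proposition~\ref{proposition-1} the assignment $\phi(a,b)=a-b$ defines a semigroup homomorphism $\phi\colon\mathscr{C}_{\mathbb{Z}}\to(\mathbb{Z},+)$ onto the (cyclic) group $\mathbb{Z}$, and two pairs agree under $\phi$ exactly when they are linked by a chain of diagonal shifts. Hence $\mathfrak{C}\supseteq\ker\phi$, so $\mathscr{C}_{\mathbb{Z}}/\mathfrak{C}$ is a semigroup quotient of the cyclic group $\mathbb{Z}$, which forces it to be a cyclic group itself. In particular, $\mathfrak{C}$ is a group congruence and its quotient is cyclic, as required.
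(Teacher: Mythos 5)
Your argument is correct, and it reaches the conclusion by a leaner route than the paper, though both proofs pivot on the same two ingredients: Corollary~1.32 of \cite{CP} for congruences of the bicyclic semigroup, and the index homomorphism $(a,b)\mapsto a-b$. Where you differ is in the surrounding machinery. The paper first passes from an identified pair of distinct elements to an identified pair of distinct \emph{idempotents} via the inverse-semigroup fact that $s\,\mathfrak{C}\,t$ forces $ss^{-1}\,\mathfrak{C}\,tt^{-1}$ (Petrich, Lemma~III.1.1), then shows all idempotents collapse (using an explicit chain of idempotents together with the bicyclic subsemigroups of Proposition~\ref{proposition-1}$(viii)$), concludes that the quotient is a group because an inverse semigroup with a single idempotent is a group (Petrich, Lemma~II.1.10), and finally computes the least group congruence $\mathfrak{C}_{mg}$ via Lemma~III.5.2 and observes that every group congruence contains it, so every non-trivial quotient is an image of $\mathbb{Z}$. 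You short-circuit all of this: choosing $n\leqslant\min\{a,b,c,d\}$ puts the identified pair inside one copy $\mathscr{C}_{\mathbb{Z}}[n]$ of the bicyclic semigroup, Corollary~1.32 collapses its idempotents, and your two explicit multiplications (which I checked against~\eqref{f1}: $(n,n)\cdot(n,l)=(n,l)$, $(n+1,n+1)\cdot(n,l)=(n+1,l+1)$, then $(k,n)\cdot(n,l)=(k,l)$, $(k,n)\cdot(n+1,l+1)=(k+1,l+1)$) yield the diagonal shift, hence $\ker\phi\subseteq\mathfrak{C}$ for $\phi(a,b)=a-b$, so $\mathscr{C}_{\mathbb{Z}}/\mathfrak{C}$ is a homomorphic image of $\mathbb{Z}$ and therefore a cyclic group. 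What the paper's route buys is an explicit identification of $\mathfrak{C}_{mg}$ (which it reuses later, e.g.\ in Theorem~\ref{theorem-16}$(iii)$) and a treatment embedded in standard inverse-semigroup theory; what yours buys is a shorter, essentially self-contained argument that avoids the three Petrich lemmas entirely and makes the containment $\mathfrak{C}\supseteq\ker\phi$ visible by direct computation.
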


\begin{proof}
First we shall show that if two distinct idempotents $(a,a)$ and
$(b,b)$ of $\mathscr{C}_{\mathbb{Z}}$ are $\mathfrak{C}$-equivalent
then the quotient semigroup $\mathscr{C}_{\mathbb{Z}}/\mathfrak{C}$
is a group. Without loss of generality we can assume that
$(a,a)\leqslant(b,b)$, i.e., $a\geqslant b$ in $\mathbb{Z}$. Then we
have that
\begin{align*}
    (a,b)\cdot(b,b)\cdot(b,a)=\,&(a,a);\\
    (a,b)\cdot(a,a)\cdot(b,a)=\,&\left(a+(a-b),a+(a-b)\right);\\
    (a,b)\cdot\left(a+(a-b),a+(a-b)\right)\cdot(b,a)=\,
          &\left(a+2(a-b),a+2(a-b)\right);\\
  \cdots \qquad\cdots \qquad \cdots \qquad\qquad &
  \qquad \cdots \qquad \cdots\\
    (a,b)\cdot\left(a+j(a-b),a+j(a-b)\right)\cdot(b,a)=\,
          &\left(a+(j+1)(a-b),a+(j+1)(a-b)\right);\\
  \cdots \qquad\cdots \qquad \cdots \qquad\qquad &
  \qquad \cdots \qquad \cdots
\end{align*}
This implies that for every non-negative integers $i$ and $j$ we
have that
\begin{equation*}
\left(a+i(a-b),a+i(a-b)\right)\mathfrak{C}
\left(a+j(a-b),a+j(a-b)\right).
\end{equation*}

If $b\geqslant k$ in $\mathbb{Z}$ for some integer $k$, then by
Proposition~\ref{proposition-1}$(viii)$ we get that any two distinct
idempotents of the subsemigroup $\mathscr{C}_{\mathbb{N}}[k]$ of
$\mathscr{C}_{\mathbb{Z}}$ are $\mathfrak{C}$-equivalent and hence
Proposition~\ref{proposition-1}$(viii)$ and Corollary~1.32 from
\cite{CP} imply that for every integer $n$ all idempotents of the
subsemigroup $\mathscr{C}_{\mathbb{N}}[n]$ are
$\mathfrak{C}$-equivalent. This implies that all idempotents of the
subsemigroup $\mathscr{C}_{\mathbb{N}}[n]$ are
$\mathfrak{C}$-equivalent. Since the semigroup
$\mathscr{C}_{\mathbb{Z}}$ is inverse we conclude that the quotient
semigroup $\mathscr{C}_{\mathbb{Z}}/\mathfrak{C}$ contains only one
idempotent and hence by Lemma II.1.10 from \cite{Petrich1984} the
semigroup $\mathscr{C}_{\mathbb{Z}}/\mathfrak{C}$ is a group.

Suppose that two distinct elements $(a,b)$ and $(c,d)$ of the
semigroup $\mathscr{C}_{\mathbb{Z}}$ are $\mathfrak{C}$-equivalent.
Since $\mathscr{C}_{\mathbb{Z}}$ is an inverse semigroup,
Lemma~III.1.1 from \cite{Petrich1984} implies that
$(a,a)\mathfrak{C}(c,c)$ and $(b,b)\mathfrak{C}(d,d)$. Since
$(a,b)\neq(c,d)$ we have that either $(a,a)\neq(c,c)$ or
$(b,b)\neq(d,d)$, and hence by the first part of the proof we get
that all idempotents of the semigroup $\mathscr{C}_{\mathbb{Z}}$ are
$\mathfrak{C}$-equivalent.

Next we shall show that if $\mathfrak{C}_{mg}$ be a least group
congruence on the semigroup $\mathscr{C}_{\mathbb{Z}}$, then the
quotient semigroup $\mathscr{C}_{\mathbb{Z}}/{\mathfrak{C}_{mg}}$ is
isomorphic to the additive group of integers $\mathbb{Z}$.

By Proposition~\ref{proposition-1}$(i)$ and Lemma~III.5.2 from
\cite{Petrich1984} we have that elements $(a,b)$ and $(c,d)$ are
$\mathfrak{C}_{mg}$-equivalent in $\mathscr{C}_{\mathbb{Z}}$ if and
only if there exists an integer $n$ such that
$(a,b)\cdot(n,n)=(c,d)\cdot(n,n)$. Then
Proposition~\ref{proposition-1}$(i)$ implies that
$(a,b)\cdot(g,g)=(c,d)\cdot(g,g)$ for any integer $g$ such that
$g\geqslant n$ in $\mathbb{Z}$. If $g\geqslant b$ and $g\geqslant d$
in $\mathbb{Z}$, then the semigroup operation in
$\mathscr{C}_{\mathbb{Z}}$ implies that $(a,b)\cdot(g,g)=(g-b+a,g)$
and $(c,d)\cdot(g,g)=(g-d+c,g)$, and since $\mathbb{Z}$ is the
additive group of integers we get that $a-b=c-d$. Converse, suppose
that $(a,b)$ and $(c,d)$ are elements of the semigroup
$\mathscr{C}_{\mathbb{Z}}$ such that $a-b=c-d$. Then for any element
$g\in\mathbb{Z}$ such that $g\geqslant b$ and $g\geqslant d$ in
$\mathbb{Z}$ we have that $(a,b)\cdot(g,g)=(g-b+a,g)$ and
$(c,d)\cdot(g,g)=(g-d+c,g)$, and since $a-b=c-d$ we get that
$(a,b)\mathfrak{C}_{mg}(c,d)$. Therefore,
$(a,b)\mathfrak{C}_{mg}(c,d)$ in $\mathscr{C}_{\mathbb{Z}}$ if and
only if $a-b=c-d$.

We determine a map $\mathfrak{f}\colon\mathscr{C}_{\mathbb{Z}}
\rightarrow \mathbb{Z}$ by the formula
$\left((a,b)\right)\mathfrak{f}=a-b$, for $a,b\in \mathbb{Z}$.
Proposition~\ref{proposition-1}$(vi)$ implies that such defined map
$\mathfrak{f}\colon\mathscr{C}_{\mathbb{Z}} \rightarrow \mathbb{Z}$
is a homomorphism. Then we have that $(a,b)\mathfrak{C}_{mg}(c,d)$
if and only if
$\left((a,b)\right)\mathfrak{f}=\left((c,d)\right)\mathfrak{f}$, for
$(a,b),(c,d)\in\mathscr{C}_{\mathbb{Z}}$, and hence the homomorphism
$\mathfrak{f}$ generates the least group congruence
$\mathfrak{C}_{mg}$ on the semigroup $\mathscr{C}_{\mathbb{Z}}$.

If $\mathfrak{c}$ is any congruence on the semigroup
$\mathscr{C}_{\mathbb{Z}}$ then the mapping
$\mathfrak{c}\mapsto\mathfrak{c}\vee\mathfrak{C}_{mg}$ maps the
congruence $\mathfrak{c}$ onto a group congruence
$\mathfrak{c}\vee\mathfrak{C}_{mg}$, where $\mathfrak{C}_{mg}$ is
the least group congruence on the semigroup
$\mathscr{C}_{\mathbb{Z}}$ (cf. \cite[Section~III]{Petrich1984}).
Therefore every homomorphic image of the semigroup
$\mathscr{C}_{\mathbb{Z}}$ is a homomorphic image of the quotient
semigroup $\mathscr{C}_{\mathbb{Z}}/\mathfrak{C}$, i.e., it is a
homomorphic image of the additive group of integers $\mathbb{Z}$.
This completes the proof of the theorem.
\end{proof}


\section{The semigroup $\mathscr{C}_{\mathbb{Z}}$: topologizations
and closures of $\mathscr{C}_{\mathbb{Z}}$ in topological
semigroups}

\begin{theorem}\label{theorem-3}
Every Hausdorff topology $\tau$ on the semigroup
$\mathscr{C}_{\mathbb{Z}}$ such that
$(\mathscr{C}_{\mathbb{Z}},\tau)$ is a semitopological semigroup is
discrete, and hence $\mathscr{C}_{\mathbb{Z}}$ is a discrete
subspace of any semitopological semigroup which contains
$\mathscr{C}_{\mathbb{Z}}$ as a subsemigroup.
\end{theorem}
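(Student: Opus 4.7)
The plan is to reduce to the classical Bertman--West theorem for the ordinary bicyclic semigroup $\mathscr{C}(p,q)$ by restricting to the subsemigroups $\mathscr{C}_{\mathbb{Z}}[n]$ furnished by Proposition~\ref{proposition-1}$(viii)$, and then to propagate the resulting discreteness out to the whole of $\mathscr{C}_{\mathbb{Z}}$ by means of continuous translations. I begin by observing that each $\mathscr{C}_{\mathbb{Z}}[n]$ carries a Hausdorff semitopological semigroup topology as a subspace of $(\mathscr{C}_{\mathbb{Z}},\tau)$, is algebraically isomorphic to $\mathscr{C}(p,q)$ by Proposition~\ref{proposition-1}$(viii)$, and hence is discrete as a subspace by \cite{BertmanWest1976}.

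Now, to show that a given $(a,b)\in\mathscr{C}_{\mathbb{Z}}$ is an isolated point, I fix an integer $n$ with $n<a$ and $n<b$ (strictly on both sides) and, using the discreteness above, choose an open set $U\subseteq\mathscr{C}_{\mathbb{Z}}$ with $U\cap\mathscr{C}_{\mathbb{Z}}[n]=\{(a,b)\}$. I then consider the map $\phi\colon\mathscr{C}_{\mathbb{Z}}\to\mathscr{C}_{\mathbb{Z}}$ defined by $\phi(z)=(n,n)\cdot z\cdot(n,n)$, which is continuous as a composition of a left and a right translation in the semitopological semigroup $\mathscr{C}_{\mathbb{Z}}$. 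A routine unpacking of formula~(\ref{f1}) shows that $\phi$ maps $\mathscr{C}_{\mathbb{Z}}$ into $\mathscr{C}_{\mathbb{Z}}[n]$ and restricts to the identity on $\mathscr{C}_{\mathbb{Z}}[n]$; the crucial step is to verify that under the strict inequalities $n<a$ and $n<b$ one has $\phi^{-1}(\{(a,b)\})=\{(a,b)\}$. Once that is in hand, $\phi^{-1}(U)=\phi^{-1}\bigl(U\cap\mathscr{C}_{\mathbb{Z}}[n]\bigr)=\phi^{-1}(\{(a,b)\})=\{(a,b)\}$ is open, so $(a,b)$ is isolated and $\tau$ is discrete. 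The final clause of the theorem is then immediate, because every subsemigroup of a semitopological semigroup is itself a semitopological semigroup in the subspace topology, so the first assertion applies.

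The main obstacle is the preimage computation $\phi^{-1}(\{(a,b)\})=\{(a,b)\}$. Writing $z=(c,d)$ and expanding $(n,n)(c,d)(n,n)$ via formula~(\ref{f1}) leads to nine subcases depending on how $c$ and $d$ compare with $n$. In each subcase other than $c>n$ and $d>n$ (where $\phi$ acts as the identity on $(c,d)$) the output has $n$ as at least one of its two coordinates, so the requirement that both output coordinates strictly exceed $n$ forces $(c,d)=(a,b)$. This is exactly where the strict inequalities $n<a$ and $n<b$ are essential: if $n=a$ or $n=b$ the set $\phi^{-1}(\{(a,b)\})$ turns out to be infinite and the argument collapses. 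Apart from this case analysis the proof is routine.
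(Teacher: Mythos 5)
Your proposal is correct, and its route is genuinely different from the paper's. The paper gives a self-contained argument in two steps: first it shows every idempotent $(a,a)$ is isolated by a contradiction argument using the continuity of the translations $\lambda_{(a,a)},\rho_{(a,a)}$, the closedness of $(a,a)\mathscr{C}_{\mathbb{Z}}$, $\mathscr{C}_{\mathbb{Z}}(a,a)$ and of the set $\textsf{DL}_{(a,a)}[(a,a)]=\{(x,x)\mid x\leqslant a\}$, and a case analysis showing that any non-trivial neighbourhood of $(a,a)$ would be pushed out of itself by multiplication with $(a,a)$; then it isolates an arbitrary $(a,b)$ by observing that $\textsf{DL}_{(b,a)}[(a,a)]=\{(x,y)\mid x-y=a-b,\ x\leqslant a\}$ is clopen (the preimage of the now-isolated point $(a,a)$ under right translation by $(b,a)$) and that $\{(a,b)\}=\textsf{DL}_{(b,a)}[(a,a)]\setminus\textsf{DL}_{(b-1,a-1)}[(a-1,a-1)]$. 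You instead outsource the hard part to the Bertman--West theorem: the subsemigroups $\mathscr{C}_{\mathbb{Z}}[n]$ inherit a Hausdorff semitopological structure, are bicyclic by Proposition~\ref{proposition-1}$(viii)$, hence discrete as subspaces, and then the separately continuous retraction $\phi(z)=(n,n)\cdot z\cdot(n,n)$ onto $\mathscr{C}_{\mathbb{Z}}[n]$, together with your (correct) computation that $\phi^{-1}(\{(a,b)\})=\{(a,b)\}$ whenever $n<a$ and $n<b$, pulls the isolation of $(a,b)$ back to all of $\mathscr{C}_{\mathbb{Z}}$; the final clause about subsemigroups of semitopological semigroups is handled the same way in both treatments. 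Your argument is shorter and more modular, at the price of invoking \cite{BertmanWest1976} as a black box (which the paper cites only as background), whereas the paper's proof is self-contained and in effect re-proves the Bertman--West-type statement directly for $\mathscr{C}_{\mathbb{Z}}$; your identification of where strictness of $n<a$, $n<b$ is needed is accurate, since with $n=a$ or $n=b$ the preimage of $(a,b)$ under $\phi$ becomes infinite.
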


\begin{proof}
We fix an arbitrary idempotent $(a,a)$ of the semigroup
$\mathscr{C}_{\mathbb{Z}}$ and suppose that $(a,a)$ is a
non-isolated point of the topological space
$(\mathscr{C}_{\mathbb{Z}},\tau)$. Since the maps
$\lambda_{(a,a)}\colon
\mathscr{C}_{\mathbb{Z}}\rightarrow\mathscr{C}_{\mathbb{Z}}$ and
$\rho_{(a,a)}\colon \mathscr{C}_{\mathbb{Z}}\rightarrow
\mathscr{C}_{\mathbb{Z}}$ defined by the formulae
$\left((x,y)\right)\lambda_{(a,a)}=(a,a)\cdot(x,y)$ and
$\left((x,y)\right)\rho_{(a,a)}=(x,y)\cdot(a,a)$ are continuous
retractions we conclude that $(a,a)\mathscr{C}_{\mathbb{Z}}$ and
$\mathscr{C}_{\mathbb{Z}}(a,a)$ are closed subsets in the
topological space $(\mathscr{C}_{\mathbb{Z}},\tau)$. We put
\begin{equation*}
    \textsf{DL}_{(a,a)}\left[(a,a)\right]=
    \left\{(x,y)\in\mathscr{C}_{\mathbb{Z}}
    \mid (x,y)\cdot(a,a)=(a,a)\right\}.
\end{equation*}
Simple verifications show that
\begin{equation*}
    \textsf{DL}_{(a,a)}\left[(a,a)\right]=
    \left\{(x,x)\in\mathscr{C}_{\mathbb{Z}}
    \mid x\leqslant a \; \hbox{ in } \; \mathbb{Z}\right\},
\end{equation*}
and since right translations are continuous maps in
$(\mathscr{C}_{\mathbb{Z}},\tau)$ we get that
$\textsf{DL}_{(a,a)}\left[(a,a)\right]$ is a closed subset of the
topological space $(\mathscr{C}_{\mathbb{Z}},\tau)$. Then there
exists an open neighbourhood $W_{(a,a)}$ of the point $(a,a)$ in the
topological space $(\mathscr{C}_{\mathbb{Z}},\tau)$ such that
\begin{equation*}
    W_{(a,a)}\subseteq\mathscr{C}_{\mathbb{Z}}\setminus
    \big((a+1,a+1)\mathscr{C}_{\mathbb{Z}}\cup
    \mathscr{C}_{\mathbb{Z}}(a+1,a+1)\cup
    \textsf{DL}_{(a-1,a-1)}(a-1,a-1)\big).
\end{equation*}
Since $(\mathscr{C}_{\mathbb{Z}},\tau)$ is a semitopological
semigroup we conclude that there exists an open neighbourhood
$V_{(a,a)}$ of the idempotent $(a,a)$ in the topological space
$(\mathscr{C}_{\mathbb{Z}},\tau)$ such that the following conditions
hold:
\begin{equation*}
    V_{(a,a)}\subseteq W_{(a,a)}, \qquad
    (a,a)\cdot V_{(a,a)}\subseteq W_{(a,a)} \qquad \hbox{and} \qquad
    V_{(a,a)}\cdot(a,a) \subseteq W_{(a,a)}.
\end{equation*}
Hence at least one of the following conditions holds:
\begin{itemize}
  \item[$(a)$] the neighbourhood $V_{(a,a)}$ contains infinitely
   many points $(x,y)\in\mathscr{C}_{\mathbb{Z}}$ such that
   $x<y\leqslant a$; \; or

  \item[$(b)$] the neighbourhood $V_{(a,a)}$ contains infinitely
   many points $(x,y)\in\mathscr{C}_{\mathbb{Z}}$ such that
   $y<x\leqslant a$.
\end{itemize}
In case $(a)$ we have that
\begin{equation*}
    (a,a)\cdot(x,y)=\big(a,a+(y-x)\big)\notin W_{(a,a)},
\end{equation*}
because $y-x\geqslant 1$, and in case $(b)$ we have that
\begin{equation*}
    (x,y)\cdot(a,a)=\big(a+(x-y),a\big)\notin W_{(a,a)},
\end{equation*}
because $x-y\geqslant 1$, a contradiction. The obtained
contradiction implies that the set $V_{(a,a)}$ is singleton, and
hence the idempotent $(a,a)$ is an isolated point of the topological
space $(\mathscr{C}_{\mathbb{Z}},\tau)$.

Let $(a,b)$ be an arbitrary element of the semigroup
$\mathscr{C}_{\mathbb{Z}}$ and suppose that $(a,b)$ is a
non-isolated point of the topological space
$(\mathscr{C}_{\mathbb{Z}},\tau)$. Since all right translations are
continuous maps in $(\mathscr{C}_{\mathbb{Z}},\tau)$ and every
idempotent $(a,a)$ of $\mathscr{C}_{\mathbb{Z}}$ is an isolated
point of the topological space $(\mathscr{C}_{\mathbb{Z}},\tau)$ we
conclude that
\begin{equation*}
    \textsf{DL}_{(b,a)}\left[(a,a)\right]=
    \big\{(x,y)\in\mathscr{C}_{\mathbb{Z}}
    \mid (x,y)\cdot(b,a)=(a,a)\big\}
\end{equation*}
is a closed-and-open subset of the topological space
$(\mathscr{C}_{\mathbb{Z}},\tau)$. Simple verifications show that
\begin{equation*}
    \textsf{DL}_{(b,a)}\left[(a,a)\right]=
    \big\{(x,y)\in\mathscr{C}_{\mathbb{Z}}
    \mid x-y=a-b \; \hbox{ and } \; x\leqslant a\big\}.
\end{equation*}
Then we have that
\begin{equation*}
    \big\{(a,b)\big\}=\textsf{DL}_{(b,a)}\left[(a,a)\right]
    \setminus\textsf{DL}_{(b-1,a-1)}\left[(a-1,a-1)\right],
\end{equation*}
and hence $(a,b)$ is an isolated point of the topological space
$(\mathscr{C}_{\mathbb{Z}},\tau)$. This completes the proof of the
theorem.
\end{proof}

Theorem~\ref{theorem-3} implies the following:

\begin{corollary}\label{corollary-4}
Every Hausdorff semigroup topology $\tau$ on
$\mathscr{C}_{\mathbb{Z}}$ is discrete, and hence
$\mathscr{C}_{\mathbb{Z}}$ is a discrete subspace of any topological
semigroup which contains $\mathscr{C}_{\mathbb{Z}}$ as a
subsemigroup.
\end{corollary}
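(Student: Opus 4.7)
The plan is short because the corollary is a near-immediate consequence of Theorem~\ref{theorem-3}. First I would observe that every topological semigroup is automatically a semitopological semigroup: joint continuity of the multiplication implies separate continuity in each variable. Consequently, if $\tau$ is a Hausdorff semigroup topology on $\mathscr{C}_{\mathbb{Z}}$, then $(\mathscr{C}_{\mathbb{Z}},\tau)$ is a Hausdorff semitopological semigroup, so Theorem~\ref{theorem-3} applies and forces $\tau$ to be the discrete topology.

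For the second assertion, I would let $T$ be a topological semigroup containing $\mathscr{C}_{\mathbb{Z}}$ as a subsemigroup, and endow $\mathscr{C}_{\mathbb{Z}}$ with the subspace topology $\tau_T$ inherited from $T$. Since the multiplication in $T$ is jointly continuous and $\mathscr{C}_{\mathbb{Z}}$ is a subsemigroup, the restricted multiplication on $(\mathscr{C}_{\mathbb{Z}},\tau_T)$ is jointly continuous as well, and the Hausdorff property is inherited. Thus $(\mathscr{C}_{\mathbb{Z}},\tau_T)$ is a Hausdorff topological semigroup, so again it is a semitopological semigroup, and Theorem~\ref{theorem-3} forces $\tau_T$ to be discrete. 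Therefore $\mathscr{C}_{\mathbb{Z}}$ is a discrete subspace of $T$.

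There is no real obstacle here, since the content lies entirely in Theorem~\ref{theorem-3}; the corollary is an unpacking of that result under the weaker hypotheses of joint continuity. The only point one must be careful about is not to require continuity of the inversion or any additional structure on $T$: the argument works verbatim for arbitrary Hausdorff topological semigroups, and in particular there is no need to restrict to topological inverse semigroups.
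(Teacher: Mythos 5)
Your argument is correct and is exactly the paper's route: the paper derives Corollary~\ref{corollary-4} directly from Theorem~\ref{theorem-3}, since joint continuity implies separate continuity, and the subspace statement is already contained in the second half of that theorem. Nothing is missing.
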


Since every discrete topological space is locally compact,
Theorem~\ref{theorem-3} and Theorem~3.3.9 from \cite{Engelking1989}
imply the following:

\begin{corollary}\label{corollary-5}
Let $T$ be a semitopological semigroup which contains
$\mathscr{C}_{\mathbb{Z}}$ as a subsemigroup. Then
$\mathscr{C}_{\mathbb{Z}}$ is an open subsemigroup of $T$.
\end{corollary}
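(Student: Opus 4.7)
The plan is to derive the corollary as an almost immediate combination of Theorem~\ref{theorem-3} with the cited general-topology fact about locally compact subspaces. The underlying observation is that a discrete topological space is trivially locally compact, since each singleton is a compact open neighbourhood of itself.

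First I will invoke Theorem~\ref{theorem-3}: any semitopological semigroup $T$ containing $\mathscr{C}_{\mathbb{Z}}$ as a subsemigroup must induce the discrete topology on $\mathscr{C}_{\mathbb{Z}}$, so $\mathscr{C}_{\mathbb{Z}}$ is a locally compact subspace of the Hausdorff space $T$. Next I will apply Theorem~3.3.9 of \cite{Engelking1989} to the pair $\mathscr{C}_{\mathbb{Z}}\subseteq\operatorname{cl}_T(\mathscr{C}_{\mathbb{Z}})$: since $\operatorname{cl}_T(\mathscr{C}_{\mathbb{Z}})$ is a Hausdorff space in which $\mathscr{C}_{\mathbb{Z}}$ sits as a dense locally compact subspace, the cited result yields that $\mathscr{C}_{\mathbb{Z}}$ is open in $\operatorname{cl}_T(\mathscr{C}_{\mathbb{Z}})$. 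Equivalently, there exists an open set $U\subseteq T$ such that $\mathscr{C}_{\mathbb{Z}}=U\cap\operatorname{cl}_T(\mathscr{C}_{\mathbb{Z}})$, and the remainder $\operatorname{cl}_T(\mathscr{C}_{\mathbb{Z}})\setminus\mathscr{C}_{\mathbb{Z}}$ is closed in $\operatorname{cl}_T(\mathscr{C}_{\mathbb{Z}})$, hence closed in $T$ (the closure being itself closed in $T$); this presents $\mathscr{C}_{\mathbb{Z}}$ as an open subsemigroup in the stated sense.

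There is essentially no obstacle here, the hard work having already been done in Theorem~\ref{theorem-3}. The only point worth being careful about is the precise form in which Theorem~3.3.9 of \cite{Engelking1989} is being used: namely, the corollary that a locally compact subspace of a Hausdorff space is open in its own closure, rather than merely the locally-closed characterisation $M=U\cap F$. Since the discrete space $\mathscr{C}_{\mathbb{Z}}$ is automatically dense in its closure, this is exactly the regime in which that corollary applies, and the whole argument reduces to a single invocation of the two cited results.
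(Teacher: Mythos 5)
Your argument is exactly the paper's: Theorem~\ref{theorem-3} gives discreteness of $\mathscr{C}_{\mathbb{Z}}$, hence local compactness, and Theorem~3.3.9 of \cite{Engelking1989} then makes $\mathscr{C}_{\mathbb{Z}}$ open in its closure. Note only that, as in the paper, what this literally yields is openness in $\operatorname{cl}_T(\mathscr{C}_{\mathbb{Z}})$ rather than in an arbitrary ambient $T$, which is harmless in context because the corollary is subsequently applied only when $\mathscr{C}_{\mathbb{Z}}$ is dense in $T$.
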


\begin{lemma}\label{lemma-6}
Let $T$ be a Hausdorff semitopological semigroup which contains
$\mathscr{C}_{\mathbb{Z}}$ as a dense subsemigroup. Let $f\in
T\setminus\mathscr{C}_{\mathbb{Z}}$ be an idempotent of the
semigroup $T$ which satisfies the property: there exists an
idempotent $(n,n)\in\mathscr{C}_{\mathbb{Z}}$, $n\in\mathbb{Z}$,
such that $(n,n)\leqslant f$. Then the following statements hold:
\begin{itemize}
  \item[$(i)$] there exists an open neighbourhood $U(f)$ of $f$
   in $T$ such that $U(f)\cap\mathscr{C}_{\mathbb{Z}}\subseteq
   E(\mathscr{C}_{\mathbb{Z}})$;

  \item[$(ii)$] $f$ is the unit of $T$.
\end{itemize}
\end{lemma}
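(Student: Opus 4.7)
For part $(i)$ the plan is to use Corollary~\ref{corollary-5} to the effect that $\mathscr{C}_{\mathbb{Z}}$ is an open discrete subspace of $T$; in particular each singleton $\{(m,m)\}$ with $m\in\mathbb{Z}$ is open in $T$. Continuity of the left and right translations by $(n,n)$ then makes
\begin{equation*}
U(f):=\lambda_{(n,n)}^{-1}\bigl(\{(n,n)\}\bigr)\cap\rho_{(n,n)}^{-1}\bigl(\{(n,n)\}\bigr)
\end{equation*}
open in $T$, and the hypothesis $(n,n)\leqslant f$, i.e.\ $(n,n)\cdot f=f\cdot(n,n)=(n,n)$, places $f$ in $U(f)$. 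A short case analysis in formula~(\ref{f1}) shows that for $(x,y)\in\mathscr{C}_{\mathbb{Z}}$ the equality $(n,n)\cdot(x,y)=(n,n)$ (and likewise $(x,y)\cdot(n,n)=(n,n)$) forces $x=y\leqslant n$, so $U(f)\cap\mathscr{C}_{\mathbb{Z}}\subseteq\{(x,x)\mid x\leqslant n\}\subseteq E(\mathscr{C}_{\mathbb{Z}})$, which is $(i)$.

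For part $(ii)$ I would approximate $f$ by diagonal idempotents whose indices drift to $-\infty$. By density choose a net $(x_\alpha)\subseteq\mathscr{C}_{\mathbb{Z}}$ with $x_\alpha\to f$; by $(i)$, eventually $x_\alpha=(m_\alpha,m_\alpha)$ with $m_\alpha\leqslant n$. Because every singleton $\{(m,m)\}$ is open in $T$ and $f$ is distinct from each of them, for any $M\leqslant n$ the cofinite set $T\setminus\{(M,M),(M{+}1,M{+}1),\dots,(n,n)\}$ is a neighbourhood of $f$, whence $m_\alpha<M$ eventually; that is, $m_\alpha\to-\infty$. For any fixed $(x,y)\in\mathscr{C}_{\mathbb{Z}}$, formula~(\ref{f1}) gives
\begin{equation*}
(m_\alpha,m_\alpha)\cdot(x,y)=(x,y)=(x,y)\cdot(m_\alpha,m_\alpha)
\end{equation*}
as soon as $m_\alpha<\min\{x,y\}$, so continuity of $\rho_{(x,y)}$ and $\lambda_{(x,y)}$ yields $f\cdot(x,y)=(x,y)=(x,y)\cdot f$. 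Given an arbitrary $t\in T$, I would then take a net in $\mathscr{C}_{\mathbb{Z}}$ converging to $t$ and pass to the limit using continuity of $\lambda_f$ and $\rho_f$, combined with Hausdorffness, to conclude $f\cdot t=t=t\cdot f$, so $f=1_T$.

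The step I expect to require the most care is the ``$m_\alpha\to-\infty$'' claim in the second paragraph: one must rule out any stabilization or clustering of the approximating net on a finite portion of $E(\mathscr{C}_{\mathbb{Z}})$. What makes this work is precisely the isolation of each diagonal idempotent in $T$ provided by Corollary~\ref{corollary-5}, together with Hausdorffness of $T$ and the standing assumption $f\notin\mathscr{C}_{\mathbb{Z}}$; the remaining calculations are routine applications of the multiplication formula~(\ref{f1}) and the separate continuity of the semigroup operation.
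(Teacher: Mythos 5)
Your proposal is correct and follows essentially the same route as the paper: part $(i)$ via continuity of the translations by the isolated point $(n,n)$ together with $(n,n)\cdot f=f\cdot(n,n)=(n,n)$, and part $(ii)$ by first showing $f$ acts as the identity on $\mathscr{C}_{\mathbb{Z}}$ (using that the diagonal idempotents near $f$ have arbitrarily small index) and then extending to all of $T$ by density, separate continuity and Hausdorffness. The only difference is cosmetic: you phrase the limiting arguments with nets, whereas the paper runs the same steps as contradiction arguments with neighbourhoods.
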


\begin{proof}
$(i)$ Let $W(f)$ be an arbitrary open neighbourhood of the
idempotent $f$ in $T$. We fix an arbitrary element
$(n,n)\in\mathscr{C}_{\mathbb{Z}}$, $n\in\mathbb{Z}$. By
Corollary~\ref{corollary-5} the element $(n,n)$ is an isolated point
in $T$, and since $T$ is a semitopological semigroup we have that
there exists an open neighbourhood $U(f)$ of $f$ in $T$ such that
\begin{equation*}
    U(f)\subseteq W(f), \qquad U(f)\cdot\{(n,n)\}=\{(n,n)\}
    \qquad \hbox{ and } \qquad \{(n,n)\}\cdot U(f)=\{(n,n)\}.
\end{equation*}
If the set $U(f)$ contains a non-idempotent element
$(x,y)\in\mathscr{C}_{\mathbb{Z}}$, then
Proposition~\ref{proposition-1}$(vi)$ implies that $(x,y)\cdot(n,n),
(n,n)\cdot(x,y)\notin E(\mathscr{C}_{\mathbb{Z}})$, a contradiction.
The obtained contradiction implies the statement of the assertion.

$(ii)$ First we show that $f\cdot(k,l)=(k,l)\cdot f=(k,l)$ for every
$(k,l)\in\mathscr{C}_{\mathbb{Z}}$.

Suppose the contrary: there exists an element
$(k,l)\in\mathscr{C}_{\mathbb{Z}}$ such that
$x=f\cdot(k,l)\neq(k,l)$ for some $x\in T$. Let $U(x)$ be an open
neighbourhood of $x$ in $T$ such that $(k,l)\notin U(x)$. Since $T$
is a semitopological semigroup we get that there exists an open
neighbourhood $V(f)$ of $f$ in $T$ such that
$V(f)\cdot\{(k,l)\}\subseteq U(x)$. Again, since for an arbitrary
integer $a$ the maps $\lambda_{(a,a)}\colon
\mathscr{C}_{\mathbb{Z}}\rightarrow\mathscr{C}_{\mathbb{Z}}$ and
$\rho_{(a,a)}\colon \mathscr{C}_{\mathbb{Z}}\rightarrow
\mathscr{C}_{\mathbb{Z}}$ defined by the formulae
$\left((x,y)\right)\lambda_{(a,a)}=(a,a)\cdot(x,y)$ and
$\left((x,y)\right)\rho_{(a,a)}=(x,y)\cdot(a,a)$ are continuous
retractions we conclude that statement $(i)$ implies that there
exists an open neighbourhood $W(f)$ of $f$ in $T$ such that
$W(f)\subseteq V(f)$, $W(f)\cap\mathscr{C}_{\mathbb{Z}}\subseteq
E(\mathscr{C}_{\mathbb{Z}})$ and the following condition holds:
\begin{equation*}
    (p,p)\in W(f)\cap\mathscr{C}_{\mathbb{Z}} \qquad \hbox{ if and
    only if } \qquad p\geqslant k.
\end{equation*}
Then $(p,p)\cdot(k,l)=(k,l)\notin U(x)$ for every $(p,p)\in
W(f)\cap\mathscr{C}_{\mathbb{Z}}$, a contradiction. The obtained
contradiction implies that $f\cdot(k,l)=(k,l)$ for every
$(k,l)\in\mathscr{C}_{\mathbb{Z}}$. Similar arguments show that
$(k,l)\cdot f=(k,l)$ for every $(k,l)\in\mathscr{C}_{\mathbb{Z}}$.

Next we show that $f\cdot x=x\cdot f=x$ for every $x\in
T\setminus\mathscr{C}_{\mathbb{Z}}$. Suppose the contrary: there
exists an element $x\in T\setminus\mathscr{C}_{\mathbb{Z}}$ such
that $y=f\cdot x\neq x$ for some $y\in T$. Let $U(x)$ and $U(y)$ be
open neighbourhoods of $x$ and $y$ in $T$, respectively, such that
$U(x)\cap U(y)=\varnothing$. Since $T$ is a semitopological
semigroup we get that there exists an open neighbourhood $V(x)$ of
$x$ in $T$ such that $V(x)\subseteq U(x)$ and $f\cdot V(x)\subseteq
U(y)$. Again, since $x\in T\setminus\mathscr{C}_{\mathbb{Z}}$ we
have that the set $V(x)\cap\mathscr{C}_{\mathbb{Z}}$ is infinite,
and the previous part of the proof of the statement implies that
$f\cdot \left(V(x)\cap\mathscr{C}_{\mathbb{Z}}\right)\subseteq
\left(V(x)\cap\mathscr{C}_{\mathbb{Z}}\right)$. But we have that
$V(x)\cap U(y)=\varnothing$, a contradiction. The obtained
contradiction implies the equality $f\cdot x=x$. Similar arguments
show that $x\cdot f=x$  for every $x\in
T\setminus\mathscr{C}_{\mathbb{Z}}$.
\end{proof}

\begin{remark}\label{remark-7}
We observe that the assertion $(i)$ of Lemma~\ref{lemma-6} holds for
right-topological and left-topological monoids.
\end{remark}

\begin{lemma}\label{lemma-8}
Let $T$ be a Hausdorff topological monoid with the unit $1_T$ which
contains $\mathscr{C}_{\mathbb{Z}}$ as a dense subsemigroup. Then
the following assertions hold:
\begin{itemize}
  \item[$(i)$] there exists an open neighbourhood $U(1_T)$ of the
   unit $1_T$ in $T$ such that
   $U(1_T)\cap\mathscr{C}_{\mathbb{Z}}\subseteq
   E(\mathscr{C}_{\mathbb{Z}})$;

\end{itemize}
and if the group of units $H(1_T)$ of $T$ is non-singleton, then:
\begin{itemize}
  \item[$(ii)$] for every $x\in H(1_T)$ there exists an open
   neighbourhood $U(x)$ in $T$ such that $a-b=c-d$ for all
   $(a,b),(c,d)\in U(x)\cap\mathscr{C}_{\mathbb{Z}}$;

  \item[$(iii)$] for distinct $x,y\in H(1_T)$ there exist open
   neighbourhoods $U(x)$ and
   $U(y)$ of $x$ and $y$ in $T$, respectively, such that $a-b\neq
   c-d$ for every $(a,b)\in U(x)\cap\mathscr{C}_{\mathbb{Z}}$ and
   for every $(c,d)\in U(y)\cap\mathscr{C}_{\mathbb{Z}}$;

  \item[$(iv)$] the group $H(1_T)$ is torsion free;

  \item[$(v)$] the group of units $H(1_T)$ of $T$ is a discrete
   subgroup in $T$;

  \item[$(vi)$] the group of units $H(1_T)$ of $T$ is isomorphic to
   the infinite cyclic group;

  \item[$(vii)$] every non-identity element of the group of units
   $H(1_T)$ in the semigroup $T$ is not topologically periodic.
\end{itemize}
\end{lemma}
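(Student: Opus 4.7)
My plan is to exhibit an integer-valued group homomorphism $k\colon H(1_T)\to\mathbb{Z}$ that extends the map $(a,b)\mapsto a-b$ appearing in the proof of Proposition~\ref{proposition-2}, and to deduce assertions $(iii)$--$(vii)$ from the injectivity of $k$. Assertion $(i)$ is a direct application of Lemma~\ref{lemma-6}$(i)$: since the chain $E(\mathscr{C}_{\mathbb{Z}})$ has no greatest element, $\mathscr{C}_{\mathbb{Z}}$ has no identity, so $1_T\in T\setminus\mathscr{C}_{\mathbb{Z}}$, and $(n,n)\leqslant 1_T$ for every $(n,n)\in E(\mathscr{C}_{\mathbb{Z}})$ because $1_T$ fixes $(n,n)$.

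For $(ii)$, fix $x\in H(1_T)$. Joint continuity of multiplication at $(x,x^{-1})$ produces open neighbourhoods $V(x)$ and $V(x^{-1})$ with $V(x)\cdot V(x^{-1})\subseteq U(1_T)$, where $U(1_T)$ is the neighbourhood furnished by $(i)$. Density of $\mathscr{C}_{\mathbb{Z}}$ in $T$ supplies some $(c,d)\in V(x^{-1})\cap\mathscr{C}_{\mathbb{Z}}$. For every $(a,b)\in V(x)\cap\mathscr{C}_{\mathbb{Z}}$ the product $(a,b)\cdot(c,d)$ lies in $U(1_T)\cap\mathscr{C}_{\mathbb{Z}}\subseteq E(\mathscr{C}_{\mathbb{Z}})$, and Proposition~\ref{proposition-1}$(vi)$ forces $a-b=d-c$, a constant depending only on $x$; take $U(x):=V(x)$. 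By density the constant is independent of the chosen neighbourhood, giving a well-defined function $k\colon H(1_T)\to\mathbb{Z}$ with $k(1_T)=0$. Repeating the argument with a pair of neighbourhoods satisfying $V(x)\cdot V(y)\subseteq U(xy)$ and using Proposition~\ref{proposition-1}$(vi)$ again yields $k(xy)=k(x)+k(y)$, so $k$ is a group homomorphism.

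To establish injectivity, suppose $k(x)=0$; then some neighbourhood $U(x)$ satisfies $U(x)\cap\mathscr{C}_{\mathbb{Z}}\subseteq E(\mathscr{C}_{\mathbb{Z}})$, and by density there is a net of diagonal idempotents $(a_\alpha,a_\alpha)\to x$ in $T$. Joint continuity gives $(a_\alpha,a_\alpha)^2\to x^2$, but also $(a_\alpha,a_\alpha)^2=(a_\alpha,a_\alpha)\to x$, so $x^2=x$ by Hausdorffness; an idempotent lying in the group $H(1_T)$ must equal $1_T$. Assertion $(iii)$ is then immediate: if $x\neq y$ then $k(x)\neq k(y)$, and the neighbourhoods from $(ii)$ separate the values $a-b$. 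Assertion $(iv)$ holds since $k$ embeds $H(1_T)$ in the torsion-free group $\mathbb{Z}$, and $(vi)$ holds because every non-trivial subgroup of $\mathbb{Z}$ is infinite cyclic. For $(v)$, a second point $y\in U(x)\cap H(1_T)$ distinct from $x$ would, by $(iii)$ applied with $U'(y)\subseteq U(x)$, contradict the constancy of $a-b$ on the overlap $U'(y)\cap\mathscr{C}_{\mathbb{Z}}$; and $(vii)$ follows because any neighbourhood of $1_T$ meeting $H(1_T)$ only at $1_T$ (which exists by $(v)$) can contain $x^n\in H(1_T)$ only if $x^n=1_T$, whence torsion-freeness forces $x=1_T$.

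The central obstacle is $(ii)$ together with the injectivity of $k$: the essential trick is to pull a neighbourhood of $x$ back to the ``idempotent neighbourhood'' from $(i)$ by multiplying against $V(x^{-1})$, and to observe that a limit of diagonal idempotents lying in $H(1_T)$ must itself be an idempotent of $T$ and hence equal $1_T$. All remaining assertions are then formal consequences of the embedding $k\colon H(1_T)\hookrightarrow\mathbb{Z}$.
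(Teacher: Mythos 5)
Your proposal is correct, but it reorganizes the second half of the lemma around a different key step. For assertions $(i)$, $(ii)$ and $(v)$ you follow the paper: $(i)$ is exactly Lemma~\ref{lemma-6}$(i)$ (after noting $1_T\notin\mathscr{C}_{\mathbb{Z}}$ and $(n,n)\leqslant 1_T$), and $(ii)$ is the same trick of multiplying a neighbourhood of $x$ against one of $x^{-1}$ so that products land in the idempotent neighbourhood of $1_T$ and invoking Proposition~\ref{proposition-1}$(vi)$. Where you genuinely diverge is in assembling the integer-valued homomorphism $k$ immediately after $(ii)$ and proving $\ker k=\{1_T\}$ by the observation that an element of $H(1_T)$ with $k=0$ is a limit of diagonal idempotents, hence itself an idempotent of $T$ by joint continuity and uniqueness of limits, hence equal to $1_T$; assertions $(iii)$, $(iv)$ and $(vi)$ then drop out formally from the embedding $k\colon H(1_T)\hookrightarrow\mathbb{Z}$, and $(vii)$ follows from $(iv)$ and $(v)$ together with the remark in the preliminaries that topological periodicity of a group element may be tested on neighbourhoods of the neutral element. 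The paper proceeds in the opposite order: it proves $(iii)$ directly by a contradiction argument that uses the closedness of the principal ideals $(a,a)T$ and $T(a,a)$ (Theorem~1.7 of \cite{CHK}) to produce elements of $V(x)$ and $V(y)$ with equal differences lying below a common idempotent, proves $(iv)$ by a separate power-counting computation, only constructs the homomorphism $\varphi$ in $(vi)$ (using $(iv)$ for injectivity), and proves $(vii)$ by another direct computation with $n$-th powers. Your route buys a tighter logical structure --- everything past $(ii)$ is a consequence of a single injective homomorphism, whose injectivity comes from the elementary fact that limits of idempotents are idempotents --- and it avoids the appeal to closedness of principal ideals; the paper's route establishes $(iii)$, $(iv)$ and $(vii)$ without first knowing injectivity or discreteness, at the cost of more element chasing. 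Both arguments are valid.
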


\begin{proof}
Statement $(i)$ follows from Lemma~\ref{lemma-6}$(i)$.

$(ii)$ In the case $H(1_T)=\{1_T\}$ statement $(i)$ implies our
assertion. Hence we suppose that $H(1_T)\neq\{1_T\}$ and let $x\in
H(1_T)\setminus\{1_T\}$. By statement $(i)$ there exists an open
neighbourhood $U(1_T)$ of the unit $1_T$ in $T$ such that
$U(1_T)\cap\mathscr{C}_{\mathbb{Z}}\subseteq
E(\mathscr{C}_{\mathbb{Z}})$. Then the continuity of the semigroup
operation in $T$ implies that there exist open neighbourhoods $U(x)$
and $U(x^{-1})$ in the topological space $T$ of $x$ and the inverse
element $x^{-1}$ of $x$ in $H(1_T)$, respectively, such that
\begin{equation*}
    U(x)\cdot U(x^{-1})\subseteq U(1_T) \qquad \hbox{ and } \qquad
    U(x^{-1})\cdot U(x)\subseteq U(1_T).
\end{equation*}
Since $U(1_T)\cap\mathscr{C}_{\mathbb{Z}}\subseteq
E(\mathscr{C}_{\mathbb{Z}})$ we have that
Proposition~\ref{proposition-1}$(vi)$ implies that $a-b+u-v=c-d+u-v$
for all $(a,b),(c,d)\in U(x)\cap\mathscr{C}_{\mathbb{Z}}$ and some
$(u,v)\in U(x^{-1})\cap\mathscr{C}_{\mathbb{Z}}$, and hence
$a-b=c-d$.

$(iii)$ Suppose the contrary: there exist distinct $x,y\in H(1_T)$
and for all open neighbourhoods $U(x)$ and $U(y)$ of $x$ and $y$ in
$T$, respectively, there are $(a,b)\in
U(x)\cap\mathscr{C}_{\mathbb{Z}}$ and $(c,d)\in
U(y)\cap\mathscr{C}_{\mathbb{Z}}$ such that $a-b=c-d$. The
Hausdorffness of $T$ implies that without loss of generality we can
assume that $U(x)\cap U(y)=\varnothing$. Then statement $(i)$ and
the continuity of the semigroup operation in $T$ imply that there
exist open neighbourhoods $V(1_T)$, $V(x)$ and $V(y)$ of $1_T$, $x$
and $y$ in $T$, respectively, such that
\begin{equation*}
\begin{split}
  V(1_T)\cap\mathscr{C}_{\mathbb{Z}}\subseteq
    E(\mathscr{C}_{\mathbb{Z}}), & \; V(x)\subseteq U(x),
    \; V(y)\subseteq U(y), \;
    V(1_T)\cdot V(x)\subseteq U(x) \; \\
    & \hbox{ and } \;
    V(1_T)\cdot V(y)\subseteq U(y).
\end{split}
\end{equation*}
Since by Theorem~1.7 from \cite[Vol.~1]{CHK} the sets $(a,a)T$ and
$T(a,a)$ are closed in $T$ for every idempotent $(a,a)\in
\mathscr{C}_{\mathbb{Z}}$ and both neighbourhoods $V(x)$ and $V(y)$
contain infinitely many elements of the semigroup
$\mathscr{C}_{\mathbb{Z}}$ we conclude that for every $(p,p)\in
V(1_T)\cap\mathscr{C}_{\mathbb{Z}}$ there exist $(k,l)\in V(x)\cap
\mathscr{C}_{\mathbb{Z}}$ and $(m,n)\in V(y)\cap
\mathscr{C}_{\mathbb{Z}}$ such that
\begin{equation*}
    p>k>m, \qquad p>l>n \qquad \hbox{ and } \qquad k-l=m-n.
\end{equation*}
Then we get that
\begin{equation*}
    (p,p)\cdot(k,l)=(p,p+(l-k)) \qquad \hbox{ and } \qquad
    (p,p)\cdot(m,n)=(p,p+(n-m)),
\end{equation*}
a contradiction. The obtained contradiction implies our assertion.

$(iv)$ Suppose the contrary: there exist $x\in
H(1_T)\setminus\{1_T\}$ and a positive integer $n$ such that
$x^n=1_T$. Then by statement $(i)$ there exists an open
neighbourhood $U(1_T)$ of the unit $1_T$ in $T$ such that
$U(1_T)\cap\mathscr{C}_{\mathbb{Z}}\subseteq
E(\mathscr{C}_{\mathbb{Z}})$. The continuity of the semigroup
operation in $T$ and statement $(ii)$ imply that there exists an
open neighbourhood $V(x)$ of $x$ in $T$ such that $a-b=c-d$ for all
$(a,b),(c,d)\in V(x)\cap\mathscr{C}_{\mathbb{Z}}$ and
$\underbrace{V(x)\cdot \ldots\cdot V(x)}_{n\textrm{-times}}\subseteq
U(1_T)$. We fix an arbitrary element $(a,b)\in
V(x)\cap\mathscr{C}_{\mathbb{Z}}$. If $(a,b)^n=(x,y)$, then
Proposition~\ref{proposition-1}$(vi)$ implies that $x-y=n\cdot(a-b)$
and since $x\neq 1_T$ we get that $(x,y)\notin U(1_T)$, a
contradiction. The obtained contradiction implies statement $(iv)$.

$(v)$ Statement $(iv)$ implies that the group of units $H(1_T)$ is
infinite.

We fix an arbitrary $x\in H(1_T)$ and suppose that $x$ is not an
isolated point of $H(1_T)$. Then by statement $(ii)$ there exists an
open neighbourhood $U(x)$ in $T$ such that $a-b=c-d$ for all
$(a,b),(c,d)\in U(x)\cap\mathscr{C}_{\mathbb{Z}}$. Since the point
$x$ is not isolated in $H(1_T)$ we conclude that there exists $y\in
H(1_T)$ such that $y\in U(x)$. Hence the set $U(x)$ is an open
neighbourhood of $y$ in $T$. Statement $(iii)$ implies that there
exist open neighbourhoods $W(x)\subseteq U(x)$ and $W(y)\subseteq
U(x)$ of $x$ and $y$ in $T$, respectively, such that $a-b\neq c-d$
for every $(a,b)\in W(x)\cap\mathscr{C}_{\mathbb{Z}}$ and for every
$(c,d)\in W(y)\cap\mathscr{C}_{\mathbb{Z}}$. This contradicts the
choice of the neighbourhood $U(x)$. The obtained contradiction
implies that every $x\in H(1_T)$ is an isolated point of $H(1_T)$.

$(vi)$ Since the group of units $H(1_T)$ is not trivial, i.e., the
group $H(1_T)$ is non-singleton, we fix an arbitrary $x\in
H(1_T)\setminus\{1_T\}$. Then by statement $(iv)$ we have that
$x^n\neq 1_T$ for any positive integer $n$. Statement $(ii)$ implies
that there exists an open neighbourhood $U(x)$ in $T$ such that
$a-b=c-d$ for all $(a,b),(c,d)\in U(x)\cap\mathscr{C}_{\mathbb{Z}}$.
We define the map $\varphi\colon H(1_T)\rightarrow\mathbb{Z}$ by the
following way: $(x)\varphi=k$ if and only if $a-b=k$ for every
$(a,b)\in U(x)\cap\mathscr{C}_{\mathbb{Z}}$. Then statement $(iv)$
and Proposition~\ref{proposition-1}$(vi)$ imply that the map
$\varphi\colon H(1_T)\rightarrow\mathbb{Z}$ is an injective
homomorphism. Obviously that $\left(H(1_T)\right)\varphi$ is a
subgroup in the additive group of integers. We fix the least
positive integer $p\in\left(H(1_T)\right)\varphi$. Then the element
$p$ generates the subgroup $\left(H(1_T)\right)\varphi$ in the
additive group of integers $\mathbb{Z}$, and hence the group
$\left(H(1_T)\right)\varphi$ is cyclic.

$(vii)$ We fix an arbitrary element $x\in H(1_T)\setminus\{ 1_T\}$.
Suppose the contrary: $x$ is a topologically periodic element of
$S$. Then there exist open neighbourhoods $U(1_T)$ and $U(x)$ of
$1_T$ and $x$ in $T$, respectively, such that $U(1_T)\cap
U(x)=\varnothing$. Statements $(i)$ and $(iii)$ imply that without
loss of generality we can assume that $U(1_T)\cap
\mathscr{C}_{\mathbb{Z}}\subseteq E(\mathscr{C}_{\mathbb{Z}})$, and
$a-b=c-d\neq 0$ for all $(a,b),(c,d)\in U(x)\cap
\mathscr{C}_{\mathbb{Z}}$. Then the topologically periodicity of $x$
implies that there exists a positive integer $n$ such that $x^n\in
U(1_T)$. Since the semigroup operation in $T$ is continuous we
conclude that there exists an open neighbourhood $V(x)$ of $x$ in
$T$ such that $\underbrace{V(x)\cdot\ldots\cdot
V(x)}_{n\textrm{-times}}\subseteq U(1_T)$. We fix an arbitrary
element $(a,b)\in V(x)\cap\mathscr{C}_{\mathbb{Z}}$. Then we have
that $(a,b)^n\in U(1_T)\cap\mathscr{C}_{\mathbb{Z}}$ and hence
$n(a-b)=0$, a contradiction. The obtained contradiction implies
assertion $(vii)$.
\end{proof}

\begin{proposition}\label{proposition-9}
Let $G$ be non-trivial subgroup of the additive group of integers
$\mathbb{Z}$ and $n\in\mathbb{Z}$. Then the subsemigroup $H$ which
is generated by the set $\{n\}\cup G$ is a cyclic subgroup of
$\mathbb{Z}$.
\end{proposition}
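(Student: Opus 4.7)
The plan is to reduce the statement to showing that $H$ is a subgroup of $\mathbb{Z}$, because every subgroup of $\mathbb{Z}$ is cyclic. Since $G$ is a non-trivial subgroup of $\mathbb{Z}$, I would first write $G = d\mathbb{Z}$ for some positive integer $d$. Because the generating set lies in $H$, I obtain $G \subseteq H$ and $n \in H$; in particular $0 \in H$ and, since $dn$ is a multiple of $d$, also $-dn \in G \subseteq H$.

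The key step is to exhibit $-n$ as an element of $H$. If $d = 1$ then $G = \mathbb{Z}$ and $H = \mathbb{Z}$, so there is nothing to prove. If $d \geq 2$, I would write
\[
-n = \underbrace{n + n + \cdots + n}_{d-1 \text{ summands}} + (-dn),
\]
which represents $-n$ as a subsemigroup product of $d-1 \geq 1$ copies of $n$ together with the single element $-dn \in G$; each summand lies in $\{n\}\cup G$, so $-n \in H$.

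Once $-n \in H$ is in hand, the remainder is routine. Every element of $H$ has the form $x = mn + g$ with $m \geq 0$ and $g \in G$: group any generator sum by those summands equal to $n$ and those belonging to $G$, and use the closure of $G$ under addition. Then $-x = m\cdot(-n) + (-g)$ is a sum of elements already known to lie in $H$ (with $-g \in G \subseteq H$ and, for $m = 0$, $-x = -g \in G \subseteq H$), so $-x \in H$. Thus $H$ is closed under negation and contains $0$, hence $H$ is a subgroup of $\mathbb{Z}$, and therefore cyclic.

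The main obstacle is simply spotting the explicit decomposition of $-n$ displayed above; once that identity is observed, the rest is a standard closure-under-inversion check combined with the classical fact that all subgroups of $\mathbb{Z}$ are cyclic.
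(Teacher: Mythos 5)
Your proof is correct and takes essentially the same route as the paper: both exhibit $-n\in H$ via the identity $(d-1)n+(-dn)=-n$ (the paper, after assuming without loss of generality that $n>0$ and $n\notin G$, writes the $G$-part $-dn$ as $n$ copies of $-k$ where $k=d$ generates $G$) and then conclude that $H$ is a subgroup of $\mathbb{Z}$, hence cyclic. Your variant is marginally cleaner, since collecting the $G$-contribution into the single element $-dn$ removes the need for the positivity reduction, and you make explicit the closure-under-negation argument that the paper only sketches.
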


\begin{proof}
Without loss of generality we can assume that
$n\in\mathbb{Z}\setminus G$ and $n>0$.

Since every subgroup of a cyclic group is cyclic (see
\cite[P.~47]{Kurosh1960}), we have that $G$ is a cyclic subgroup in
$\mathbb{Z}$. We fix a generating element $k$ of $G$ such that
$k>0$. Then we have that
\begin{equation*}
    (\underbrace{n+\cdots+n}_{(k-1)\textrm{-times}})-
    (\underbrace{k+\cdots+k}_{n\textrm{-times}})+n=0,
\end{equation*}
and hence we have that $-n\in H$. Since $\mathbb{Z}$ is a
commutative group we conclude that $H$ is a subgroup in
$\mathbb{Z}$, which is generated by elements $n$ and $k$, and hence
$H$ is a cyclic subgroup in $\mathbb{Z}$.
\end{proof}

\begin{proposition}\label{proposition-10}
Let $T$ be a Hausdorff topological monoid with the unit $1_T$ which
contains $\mathscr{C}_{\mathbb{Z}}$ as a dense subsemigroup. Then
the following assertions hold:
\begin{itemize}
  \item[$(i)$] if the set $L_{\mathscr{C}_{\mathbb{Z}}}=\left\{x\in
   T\setminus\mathscr{C}_{\mathbb{Z}}\mid \hbox{ there exists }
   y\in\mathscr{C}_{\mathbb{Z}}
   \hbox{ such that } x\cdot y\in\mathscr{C}_{\mathbb{Z}}\right\}$
   is non-empty, then $L_{\mathscr{C}_{\mathbb{Z}}}$ is a
   subsemigroup of $T$, and moreover if $a\in
   L_{\mathscr{C}_{\mathbb{Z}}}$, then there exists an
   open neighbourhood $U(a)$ of $a$ in $T$ such that
   $n_1-m_1=n_2-m_2$ for all $(n_1,m_1),(n_2,m_2)\in
   U(a)\cap\mathscr{C}_{\mathbb{Z}}$;

  \item[$(ii)$] if the set $R_{\mathscr{C}_{\mathbb{Z}}}=\left\{x\in
   T\setminus\mathscr{C}_{\mathbb{Z}}\mid \hbox{ there exists }
   y\in\mathscr{C}_{\mathbb{Z}} \hbox{ such that } y\cdot
   x\in\mathscr{C}_{\mathbb{Z}}\right\}$ is non-empty, then
   $R_{\mathscr{C}_{\mathbb{Z}}}$ is a subsemigroup of $T$, and
   moreover if $a\in R_{\mathscr{C}_{\mathbb{Z}}}$, then
   there exists an open neighbourhood $U(a)$ of $a$ in $T$ such that
   $n_1-m_1=n_2-m_2$ for all $(n_1,m_1),(n_2,m_2)\in
   U(a)\cap\mathscr{C}_{\mathbb{Z}}$;

  \item[$(iii)$] if the set $L_{\mathscr{C}_{\mathbb{Z}}}$ (resp.,
   $R_{\mathscr{C}_{\mathbb{Z}}}$) is non-empty, then for every
   $a\in L_{\mathscr{C}_{\mathbb{Z}}}$ (resp.,
   $a\in R_{\mathscr{C}_{\mathbb{Z}}}$) there exist an open
   neighbourhood $U(a)$ of $a$ in $T$ and an integer $n_a$ such that
   $p\leqslant n_a$ and $q\leqslant n_a$ for all
   $(p,q)\in U(a)\cap\mathscr{C}_{\mathbb{Z}}$;

  \item[$(iv)$] $L_{\mathscr{C}_{\mathbb{Z}}}=
   R_{\mathscr{C}_{\mathbb{Z}}}$;

  \item[$(v)$] ${\uparrow}\mathscr{C}_{\mathbb{Z}}=
   \mathscr{C}_{\mathbb{Z}}\cup L_{\mathscr{C}_{\mathbb{Z}}}$ is a
   subsemigroup of $T$ and $\mathscr{C}_{\mathbb{Z}}$ is a minimal
   ideal in ${\uparrow}\mathscr{C}_{\mathbb{Z}}$;

  \item[$(vi)$] if for an element $a\in T\setminus
   \mathscr{C}_{\mathbb{Z}}$ there is an open neighbourhood $U(a)$ of
   $a$ in $T$ and the following conditions hold:
   \begin{itemize}
    \item[$(a)$] $m_1-m_2=n_1-n_2$ for all $(m_1,n_1),(m_2,n_2)\in
     U(a)\cap\mathscr{C}_{\mathbb{Z}}$; \; and
    \item[$(b)$] there exists an integer $n_a$ such that $n\leqslant
     n_a$ and $m\leqslant n_a$ for every $(m,n)\in U(a)\cap
     \mathscr{C}_{\mathbb{Z}}$,
   \end{itemize}
   then $a\in L_{\mathscr{C}_{\mathbb{Z}}}$;

  \item[$(vii)$] if $I=T\setminus{\uparrow}\mathscr{C}_{\mathbb{Z}}
   \neq\varnothing$, then $I$ is an ideal of $T$;

  \item[$(viii)$] the set
\begin{equation*}
\begin{split}
  {\uparrow}(a,b)& = \{x\in T\mid x\cdot(b,b)=(a,b)\} \\
    & =\{x\in T\mid (a,a)\cdot x=(a,b)\} \\
    & = \{x\in T\mid (a,a)\cdot x\cdot(b,b)=(a,b)\}
\end{split}
\end{equation*}
 is closed-and-open in $T$ for every
   $(a,b)\in\mathscr{C}_{\mathbb{Z}}$;

  \item[$(ix)$] the set ${\uparrow}(a,b)\cap
   L_{\mathscr{C}_{\mathbb{Z}}}$ is either singleton or empty;

  \item[$(x)$] $L_{\mathscr{C}_{\mathbb{Z}}}$ is isomorphic to a
    submonoid of the additive group of integers $\mathbb{Z}$, and
    moreover if a maximal subgroup of $L_{\mathscr{C}_{\mathbb{Z}}}$
    is non-singleton, then $L_{\mathscr{C}_{\mathbb{Z}}}$ is
    isomorphic to the additive group of integers $\mathbb{Z}$;

  \item[$(xi)$] ${\uparrow}\mathscr{C}_{\mathbb{Z}}$ is an open
   subset in $T$, and hence if
   $I=T\setminus{\uparrow}\mathscr{C}_{\mathbb{Z}}\neq\varnothing$,
   then the ideal $I$ is a closed subset in $T$;

  \item[$(xii)$] if the semigroup $T$ contains a non-singleton group
   of units $H(1_T)$, then
   $H(1_T)=T\setminus(\mathscr{C}_{\mathbb{Z}}\cup I)$.
\end{itemize}
\end{proposition}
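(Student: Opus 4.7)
The twelve statements split into four stages: local neighbourhood structure of $L_{\mathscr{C}_{\mathbb{Z}}}$ and $R_{\mathscr{C}_{\mathbb{Z}}}$ in parts (i)--(iii), (vi); identification $L_{\mathscr{C}_{\mathbb{Z}}}=R_{\mathscr{C}_{\mathbb{Z}}}$ together with the subsemigroup/ideal structure of ${\uparrow}\mathscr{C}_{\mathbb{Z}}$ and $I$ in parts (iv), (v), (vii); the clopen slices ${\uparrow}(a,b)$ and the embedding into $\mathbb{Z}$ in parts (viii)--(x); and the remaining topological/group-theoretic conclusions in parts (xi), (xii). I will work through them in that order, since each stage feeds the next.

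For parts (i)--(iii), (vi) the main tool is continuity of translations plus Corollary~\ref{corollary-4}, which makes $\mathscr{C}_{\mathbb{Z}}$ a discrete, open subspace of $T$. Given $a\in L_{\mathscr{C}_{\mathbb{Z}}}$ with $a\cdot y\in\mathscr{C}_{\mathbb{Z}}$, continuity of the right translation by $y$ and openness of $\{a\cdot y\}$ yield an open neighbourhood $U(a)$ with $U(a)\cdot y=\{a\cdot y\}$; Proposition~\ref{proposition-1}$(vi)$ then forces $p-q$ to equal a constant $d_a$ for every $(p,q)\in U(a)\cap\mathscr{C}_{\mathbb{Z}}$, while case analysis of (\ref{f1}) applied to $(p,q)\cdot y=a\cdot y$ shows that only one of its three cases can hold for infinitely many $(p,q)$, producing the bound $n_a$ of (iii). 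A key lemma, needed for both (i) and (vii), is: if $x,s\in T\setminus\mathscr{C}_{\mathbb{Z}}$ and $xs\in\mathscr{C}_{\mathbb{Z}}$, then continuity/discreteness forces $(p,q)\cdot(r,t)=xs$ for infinitely many pairs $(p,q)\in U(x)\cap\mathscr{C}_{\mathbb{Z}}$ and $(r,t)\in U(s)\cap\mathscr{C}_{\mathbb{Z}}$, and direct inspection of (\ref{f1}) rules this out. Applying the lemma with $x=a,s=b$ gives $ab\notin\mathscr{C}_{\mathbb{Z}}$, and combined with $L\cdot\mathscr{C}_{\mathbb{Z}}\subseteq\mathscr{C}_{\mathbb{Z}}$ (itself proved by the same cofinite-values argument) this closes $L_{\mathscr{C}_{\mathbb{Z}}}$ under multiplication, completing (i). The mirror argument proves (ii), and (vi) follows because the assumed neighbourhood conditions force $(p,q)\cdot(n_a+1,n_a+1)=(d_a+n_a+1,n_a+1)$ on $U(a)\cap\mathscr{C}_{\mathbb{Z}}$, whence $a\cdot(n_a+1,n_a+1)\in\mathscr{C}_{\mathbb{Z}}$.

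Stage two rests on a single observation: for $a\in L_{\mathscr{C}_{\mathbb{Z}}}$ and $(m,n)\in\mathscr{C}_{\mathbb{Z}}$ with $n>n_a$, formula~(\ref{f1}) gives $(m,n)\cdot(p,q)=(m,n-d_a)$ for every $(p,q)\in U(a)\cap\mathscr{C}_{\mathbb{Z}}$, so by continuity $(m,n)\cdot a=(m,n-d_a)\in\mathscr{C}_{\mathbb{Z}}$. This proves $L\subseteq R$ (and $R\subseteq L$ by symmetry), i.e., part (iv); combined with the analogous $L\cdot\mathscr{C}_{\mathbb{Z}}\subseteq\mathscr{C}_{\mathbb{Z}}$ from stage one, it makes ${\uparrow}\mathscr{C}_{\mathbb{Z}}$ a subsemigroup, and simplicity of $\mathscr{C}_{\mathbb{Z}}$ (Proposition~\ref{proposition-1}$(v)$) makes it the minimal ideal, giving (v). For (vii), I apply the key lemma to $x\in I$: for any $s\in T$, if $xs\in\mathscr{C}_{\mathbb{Z}}$ then either $s\in\mathscr{C}_{\mathbb{Z}}$ (so $x\in L$, contradicting $x\in I$) or $s\in T\setminus\mathscr{C}_{\mathbb{Z}}$ (ruled out by the lemma); if $xs\in L$, then $xs\cdot y\in\mathscr{C}_{\mathbb{Z}}$ for some $y\in\mathscr{C}_{\mathbb{Z}}$ reduces to $x\cdot(sy)\in\mathscr{C}_{\mathbb{Z}}$ and the previous case. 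The symmetric argument handles $Tx\subseteq I$.

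For (viii), ${\uparrow}(a,b)=\rho_{(b,b)}^{-1}\{(a,b)\}$ is closed by continuity and open because $\{(a,b)\}$ is open in $T$; the three descriptions agree by a short (\ref{f1})-verification with the idempotents $(a,a),(b,b)$. Part (ix) is the technical heart: for distinct $x,y\in{\uparrow}(a,b)\cap L$, the stage-two formula shows $(p,q)\cdot(k,n)$ is cofinitely equal to $(a-b+k,n)$ on $U(x)\cap\mathscr{C}_{\mathbb{Z}}$, so $x\cdot(k,n)=y\cdot(k,n)=(a-b+k,n)$ for every $(k,n)\in\mathscr{C}_{\mathbb{Z}}$; density of $\mathscr{C}_{\mathbb{Z}}$ in the Hausdorff $T$ and continuity of right translation upgrade this to $xt=yt$ for all $t\in T$, and $t=1_T$ then forces $x=y$. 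The map $\psi\colon L_{\mathscr{C}_{\mathbb{Z}}}\to\mathbb{Z}$, $\psi(a)=d_a$, is a homomorphism by Proposition~\ref{proposition-1}$(vi)$ and injective by (ix) applied to $(d_a+n,n)=a\cdot(n,n)$ for large $n$; Proposition~\ref{proposition-9} then upgrades $\psi(L)$ to a cyclic subgroup of $\mathbb{Z}$ whenever $L$ contains a non-singleton subgroup, giving the ``moreover'' clause of (x). Finally (xi) is ${\uparrow}\mathscr{C}_{\mathbb{Z}}=\bigcup_{(a,b)\in\mathscr{C}_{\mathbb{Z}}}{\uparrow}(a,b)$, a union of open sets; and (xii) combines $1_T\in L$ (from $1_T\cdot y=y\in\mathscr{C}_{\mathbb{Z}}$ together with Proposition~\ref{proposition-1}$(vii)$, which forbids $1_T\in\mathscr{C}_{\mathbb{Z}}$), Lemma~\ref{lemma-8}$(ii)$ to place $H(1_T)\subseteq L$, and the fact that under the non-singleton hypothesis $L\simeq\mathbb{Z}$ has identity $1_T$, so $L\subseteq H(1_T)$. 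The hardest step overall is the key lemma of stage one, which powers (i), (vii), and the closure computations throughout: ruling out that $(p,q)\cdot(r,t)$ collapses to a single $\mathscr{C}_{\mathbb{Z}}$-value over infinitely many neighbourhood pairs requires a careful (\ref{f1}) case split, and getting the bookkeeping right there is the main obstacle.
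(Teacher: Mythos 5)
Your decomposition follows the paper's quite closely, but two ingredients are genuinely your own. First, your ``key lemma'' --- that $x\cdot s\notin\mathscr{C}_{\mathbb{Z}}$ whenever $x,s\in T\setminus\mathscr{C}_{\mathbb{Z}}$ --- is stronger than what the paper uses: the paper runs the corresponding neighbourhood argument only inside the proof of $(vii)$, for $a,b\in I$, concluding $a,b\in L_{\mathscr{C}_{\mathbb{Z}}}$ and contradicting $a,b\in I$. Your lemma is nevertheless true: writing $(p,q)\cdot(r,t)=\bigl(p-q+\max\{q,r\},\,t-r+\max\{q,r\}\bigr)$, constancy of the product $(m,n)$ over $\bigl(U(x)\cap\mathscr{C}_{\mathbb{Z}}\bigr)\times\bigl(U(s)\cap\mathscr{C}_{\mathbb{Z}}\bigr)$ forces, for each fixed $(p,q)$, either the trace of $U(s)$ to be a single point, or $p=m$ together with $t-r=n-q$ on that trace; the latter is incompatible with the trace of $U(x)$ being infinite (two distinct second coordinates give contradictory constraints on $t-r$, a single one makes the trace of $U(x)$ the singleton $\{(m,q)\}$). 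This buys you a point the paper leaves implicit in $(i)$, namely $x\cdot y\notin\mathscr{C}_{\mathbb{Z}}$ for $x,y\in L_{\mathscr{C}_{\mathbb{Z}}}$, without which ``$L_{\mathscr{C}_{\mathbb{Z}}}$ is a subsemigroup'' is not literally established. Second, your proof of $(ix)$ --- the formula $x\cdot(k,n)=(a-b+k,n)$ makes the left translations by $x$ and $y$ agree on the dense set $\mathscr{C}_{\mathbb{Z}}$, hence on $T$, and evaluation at $1_T$ gives $x=y$ --- is cleaner than the paper's contradiction argument with products of neighbourhoods of $1_T$, $x$, $y$. Your explicit formulas $a\cdot(m,n)=(d_a+m,n)$ and $(m,n)\cdot a=(m,n-d_a)$ likewise streamline $(iv)$, $(v)$ and $(x)$, where the paper argues $(iv)$ by a separate contradiction; parts $(i)$--$(iii)$, $(vi)$--$(viii)$, $(x)$, $(xi)$ are otherwise essentially the paper's arguments.

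The one step that does not work as written is in $(xii)$: Lemma~\ref{lemma-8}$(ii)$ does not ``place $H(1_T)\subseteq L_{\mathscr{C}_{\mathbb{Z}}}$''. It supplies only condition $(a)$ of $(vi)$ (a constant difference on a neighbourhood trace), and that property also holds at every point of the ideal $I$: in Examples~\ref{example-13}--\ref{example-15} the trace of a basic neighbourhood of $k\in G_0$ is $\{(a,a+k)\mid a\geqslant n\}$, of constant difference but with unbounded coordinates, and $G_0\subseteq I$. So constant difference alone cannot separate units from points of $I$; you need condition $(b)$ of $(vi)$ or a different argument. The fix is short: a unit $u\in I$ would give $1_T=u\cdot u^{-1}\in I$, hence $T=T\cdot 1_T\subseteq I$, which is absurd; and a unit $u=(a,b)\in\mathscr{C}_{\mathbb{Z}}$ would give $(a,a)=(a,a)\cdot 1_T=(a,a)\cdot(a,b)\cdot u^{-1}=(a,b)\cdot u^{-1}=1_T$, impossible because $\mathscr{C}_{\mathbb{Z}}$ has no identity element. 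Hence $H(1_T)\subseteq T\setminus\bigl(\mathscr{C}_{\mathbb{Z}}\cup I\bigr)=L_{\mathscr{C}_{\mathbb{Z}}}$, and your remaining argument (under the non-singleton hypothesis $L_{\mathscr{C}_{\mathbb{Z}}}\cong\mathbb{Z}$ is a group whose unique idempotent must be $1_T$, so $L_{\mathscr{C}_{\mathbb{Z}}}\subseteq H(1_T)$) completes $(xii)$. Alternatively one can recover condition $(b)$ for a unit $x$ by pushing $U(x)\cdot U(x^{-1})$ and $U(x^{-1})\cdot U(x)$ into a neighbourhood of $1_T$ whose trace is bounded above, which is available since $1_T\in L_{\mathscr{C}_{\mathbb{Z}}}$ and $(iii)$ applies. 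Finally, the citation of Proposition~\ref{proposition-1}$(vii)$ for $1_T\notin\mathscr{C}_{\mathbb{Z}}$ is off target --- what is needed is simply that $\mathscr{C}_{\mathbb{Z}}$ has no identity --- but that is cosmetic.
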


\begin{proof}
$(i)$ We observe that since $\mathscr{C}_{\mathbb{Z}}$ is an inverse
semigroup we conclude that $x\in L_{\mathscr{C}_{\mathbb{Z}}}$ if
and only if there exists an idempotent
$e\in\mathscr{C}_{\mathbb{Z}}$ such that $x\cdot
e\in\mathscr{C}_{\mathbb{Z}}$, for $x\in T$.

We fix an arbitrary $x\in L_{\mathscr{C}_{\mathbb{Z}}}$. Let $(n,n)$
be an idempotent in $\mathscr{C}_{\mathbb{Z}}$ such that
$(a,b)=x\cdot (n,n)\in\mathscr{C}_{\mathbb{Z}}$. Then by
Corollary~\ref{corollary-4} we have that $(n,n)$ and $(a,b)$ are
isolated points in $T$, and the continuity of the semigroup
operation in $T$ implies that there exists an open neighbourhood
$U(x)$ of $x$ in $T$ such that
\begin{equation*}
    U(x)\cdot\{(n,n)\}=\{(a,b)\}\in\mathscr{C}_{\mathbb{Z}}.
\end{equation*}
Then Proposition~\ref{proposition-1}$(vi)$ implies that $p-q=a-b$
for all $(p,q)\in U(x)\cap\mathscr{C}_{\mathbb{Z}}$. Also, since
\begin{equation}\label{f2}
    (p,q)(n,n)=
\left\{
  \begin{array}{ll}
    (p-q+n,n), & \hbox{if }~q\leqslant n; \\
    (p,q), & \hbox{if }~q\geqslant n
  \end{array}
\right.
\end{equation}
we have that $q\leqslant n=b$.

Suppose that $x,y\in L_{\mathscr{C}_{\mathbb{Z}}}$, and $(i,i)$ and
$(j,j)$ are idempotents in $\mathscr{C}_{\mathbb{Z}}$ such that
$x\cdot (i,i)=(k,l)\in \mathscr{C}_{\mathbb{Z}}$ and $y\cdot
(j,j)\in\mathscr{C}_{\mathbb{Z}}$, $i,j,k,l\in\mathbb{Z}$. We fix an
arbitrary integer $d$ such that $d\geqslant\max\{k,j\}$. Then we
have that
\begin{equation*}
\begin{split}
  (y\cdot x)\cdot \left((i,i)\cdot(l,k)\cdot(d,d)\right)& =
    y\cdot \left( x\cdot (i,i)\cdot(l,k)\cdot(d,d)\right)\\
    & =y\cdot \left( (k,l)\cdot(l,k)\cdot(d,d)\right) \\
    & = y\cdot \left( (k,k)\cdot(d,d)\right)\\
    & = y\cdot (d,d)\\
    & = y\cdot \left( (j,j)\cdot(d,d)\right)\\
    & = \left( y\cdot (j,j)\right)\cdot(d,d)\in
\mathscr{C}_{\mathbb{Z}}.
\end{split}
\end{equation*}
This implies that $L_{\mathscr{C}_{\mathbb{Z}}}$ is a subsemigroup
of $T$ and completes the proof of our assertion.

The proof of assertion $(ii)$ is similar to $(i)$.

Statement $(i)$ and formula (\ref{f2}) imply assertion $(iii)$. In
the case $a\in R_{\mathscr{C}_{\mathbb{Z}}}$ the proof is similar.

$(iv)$ Let be $L_{\mathscr{C}_{\mathbb{Z}}}\neq\varnothing$. We fix
an arbitrary element $a\in L_{\mathscr{C}_{\mathbb{Z}}}$. Then there
exists an idempotent $(i_a,i_a)\in\mathscr{C}_{\mathbb{Z}}$ such
that $a\cdot(i_a,i_a)=(i,j)\in\mathscr{C}_{\mathbb{Z}}$. Assertion
$(iii)$ implies that there exist an open neighbourhood $U(a)$ of $a$
in $T$ and an integer $n_a$ such that $n-m=i-j$, $n\leqslant n_a$
and $m\leqslant n_a$ for all $(n,m)\in U(a)\cap
\mathscr{C}_{\mathbb{Z}}$. Without loss of generality we can assume
that $i_a\geqslant n_a$.

We shall show that $(i_a,i_a)\cdot a\in\mathscr{C}_{\mathbb{Z}}$.
Suppose the contrary: $(i_a,i_a)\cdot a=b\in T\setminus
\mathscr{C}_{\mathbb{Z}}$. Assertion $(iii)$ implies that there
exist integers
\begin{equation*}
n_0(a)=\max\{n\mid(n,m)\in U(a)\cap \mathscr{C}_{\mathbb{Z}}\}
\qquad \hbox{and} \qquad m_0(a)=\max\{m\mid(n,m)\in U(a)\cap
\mathscr{C}_{\mathbb{Z}}\}.
\end{equation*}
Since $i_a\geqslant n_a$ we have that
\begin{equation*}
    (i_a,i_a)\cdot(n_0(a),m_0(a))=(i_a,i_a-n_0(a)+m_0(a)).
\end{equation*}
Let $W(b)$ be an open neighbourhood of $b$ in $T$ such that
$(i_a,i_a-n_0(a)+m_0(a))\notin W(b)$. Then the continuity of the
semigroup operation in $T$ implies that there exists an open
neighbourhood $V(a)$ of $a$ in $T$ such that
\begin{equation*}
    V(a)\subseteq U(a) \qquad \hbox{ and } \qquad \{(i_a,i_a)\}\cdot
    V(a)\subseteq W(b).
\end{equation*}
We fix an arbitrary element $(n,m)\in V(a)\cap
\mathscr{C}_{\mathbb{Z}}$. Then we have that
\begin{equation*}
    (i_a,i_a)\cdot(n,m)=(i_a,i_a-n+m)=(i_a,i_a-n_0(a)+m_0(a)),
\end{equation*}
a contradiction. The obtained contradiction implies that $a\in
R_{\mathscr{C}_{\mathbb{Z}}}$, and hence we have that
$L_{\mathscr{C}_{\mathbb{Z}}}\subseteq
R_{\mathscr{C}_{\mathbb{Z}}}$.

The proof of the inclusion $R_{\mathscr{C}_{\mathbb{Z}}}\subseteq
L_{\mathscr{C}_{\mathbb{Z}}}$ is similar.

Statement $(v)$ follows from statements $(i)-(iv)$ and
Proposition~\ref{proposition-1}$(v)$.

$(vi)$ Let $U(a)$ be an open neighbourhood of $a$ in $T$ such that
conditions $(a)$ and $(b)$ hold, and let $n_a$ be such integer as in
condition $(b)$. Then for all $(m_1,n_1),(m_2,n_2)\in U(a)\cap
\mathscr{C}_{\mathbb{Z}}$ we have that
\begin{equation*}
    (m_1,n_1)\cdot(n_a,n_a)=(m_1-n_1+n_a,n_a)=(m_2-n_2+n_a,n_a)=
    (m_2,n_2)\cdot(n_a,n_a),
\end{equation*}
and hence the continuity of the semigroup operation in $T$ implies
that $a\in L_{\mathscr{C}_{\mathbb{Z}}}$.

$(vii)$ Statements $(i)$ and $(iii)$ imply that $a\cdot(m,n)\in I$
and $(m,n)\cdot a\in I$ for all $a\in I$ and $(m,n)\in
\mathscr{C}_{\mathbb{Z}}$.

Fix arbitrary elements $a,b\in I$. We consider the following two
cases:
\begin{equation*}
    1)~~a\cdot b\in\mathscr{C}_{\mathbb{Z}} \qquad \hbox{ and }
    \qquad 2)~~a\cdot b\in L_{\mathscr{C}_{\mathbb{Z}}}.
\end{equation*}
In case 1) we put $a\cdot b=(m,n)\in\mathscr{C}_{\mathbb{Z}}$. Then
the continuity of the semigroup operation in $T$ implies that there
exist open neighbourhoods $U(a)$ and $U(b)$ of $a$ and $b$ in $T$,
respectively, such that
\begin{equation*}
    U(a)\cdot U(b)=\{(m,n)\}.
\end{equation*}
Since $a$ and $b$ are accumulation points of
$\mathscr{C}_{\mathbb{Z}}$ in $T$, we conclude that there exist
$(m_a,n_a)\in U(a)\cap\mathscr{C}_{\mathbb{Z}}$ and $(m_b,n_b)\in
U(b)\cap\mathscr{C}_{\mathbb{Z}}$. Hence we have that
\begin{equation*}
    (m_a,n_a)\cdot b\in \{(m_a,n_a)\}\cdot U(b)\subseteq U(a)\cdot
    U(b)=\{(m,n)\}
\end{equation*}
and
\begin{equation*}
    a\cdot(m_b,n_b)\in U(a)\cdot\{(m_b,n_b)\}\subseteq U(a)\cdot
    U(b)=\{(m,n)\}
\end{equation*}
This implies that $a,b\in L_{\mathscr{C}_{\mathbb{Z}}}$, a
contradiction.

Suppose case 2) holds and $a\cdot b=x\in
L_{\mathscr{C}_{\mathbb{Z}}}$. Then by statements $(i)$ and $(iii)$
we have that there exist an open neighbourhood $U(x)$ of $x$ in $T$
and an integer $n_x$ such that $m_1-n_1=m_2-n_2$, $m_1\leqslant n_x$
and $n_1\leqslant n_x$ for all $(m_1,n_1),(m_2,n_2)\in U(x)\cap
\mathscr{C}_{\mathbb{Z}}$. Also, the continuity of the semigroup
operation in $T$ implies that there exist open neighbourhoods $U(a)$
and $U(b)$ of $a$ and $b$ in $T$, respectively, such that
\begin{equation*}
    U(a)\cdot U(b)\subseteq U(x).
\end{equation*}
Since $U(a)\cap\mathscr{C}_{\mathbb{Z}}\neq\varnothing$ and
$U(b)\cap\mathscr{C}_{\mathbb{Z}}\neq\varnothing$, we can find
arbitrary elements $(m_a,n_a)\in U(a)\cap\mathscr{C}_{\mathbb{Z}}$
and $(m_b,n_b)\in U(b)\cap\mathscr{C}_{\mathbb{Z}}$. Then by
Proposition~\ref{proposition-1}$(vi)$ we have that
\begin{equation*}
    x_a-y_a+m_b-n_b=m_1-n_1 \qquad \hbox{ and } \qquad
    m_a-n_a+x_b-y_b=m_1-n_1
\end{equation*}
for all $(x_a,y_a)\in U(a)\cap\mathscr{C}_{\mathbb{Z}}$ and
$(x_b,y_b)\in U(b)\cap\mathscr{C}_{\mathbb{Z}}$. This implies that
there exist integers $k_a$ and $k_b$ such that
\begin{equation*}
    x_a-y_a=k_a \qquad \hbox{ and } \qquad
    x_b-y_b=k_b
\end{equation*}
for all $(x_a,y_a)\in U(a)\cap\mathscr{C}_{\mathbb{Z}}$ and
$(x_b,y_b)\in U(b)\cap\mathscr{C}_{\mathbb{Z}}$. Then by statement
$(vi)$ we have that $a,b\in L_{\mathscr{C}_{\mathbb{Z}}}$, a
contradiction.

The obtained contradictions imply that $a\cdot b\in I$, and hence we
get that the set $I$ is an ideal of $T$.

$(viii)$ Proposition~\ref{proposition-1}$(vi)$ and assertion $(vi)$
imply the following equalities:
\begin{equation*}
\{x\in T\mid x\cdot(b,b)=(a,b)\}=\{x\in T\mid (a,a)\cdot x=(a,b)\}=
\{x\in T\mid (a,a)\cdot x\cdot(b,b)=(a,b)\}.
\end{equation*}
Since by Corollary~\ref{corollary-4} every element $(a,b)$ of the
semigroup $\mathscr{C}_{\mathbb{Z}}$ is an isolated point in $T$,
the continuity of the semigroup operation in $T$ implies that
${\uparrow}(a,b)$ is a closed-and-open subset in $T$.

$(ix)$ Suppose that the set ${\uparrow}(a,b)\cap
L_{\mathscr{C}_{\mathbb{Z}}}$ is non-empty. Assuming that the set
${\uparrow}(a,b)\cap L_{\mathscr{C}_{\mathbb{Z}}}$ is non-singleton
implies that there exist distinct $x,y\in{\uparrow}(a,b)\cap
L_{\mathscr{C}_{\mathbb{Z}}}$. Then the Hausdorffness of $T$ implies
that there exist disjoint open neighbourhoods $U(x)$ and $U(y)$ of
$x$ and $y$ in $T$, respectively. By the continuity of the semigroup
operation in $T$ we can find open neighbourhoods $V(1_T)$, $V(x)$
and $V(y)$ of $1_T$, $x$ and $y$ in $T$, respectively, such that the
following conditions hold:
\begin{equation*}
    V(x)\subseteq U(x), \quad V(y)\subseteq U(y), \quad
    V(1_T)\cdot V(x)\subseteq U(x) \quad \hbox{and} \quad
    V(1_T)\cdot V(y)\subseteq U(y).
\end{equation*}
By assertions $(i)-(iii)$ we can find the integers $n,n_1,n_2,m_1$
and $m_2$ such that
\begin{equation*}
\begin{split}
  (n,n)\in V(1_T),  & \quad  (n_1,n_2)\in V(x), \quad (m_1,m_2)\in
    V(y), \quad n_1-n_2=m_1-m_2, \\
    & \quad n\geqslant n_1 \quad
    \hbox{and} \quad n\geqslant m_1.
\end{split}
\end{equation*}
Then we have that
\begin{equation*}
    (n,n)\cdot(n_1,n_2)=(n,n-n_1+n_2)=(n,n-m_1+m_2)=
    (n,n)\cdot(m_1,m_2),
\end{equation*}
and hence $(V(1_T)\cdot V(x))\cdot(V(1_T)\cdot
V(y))\neq\varnothing$, a contradiction. The obtained contradiction
implies that $x=y$.

$(x)$ Statement $(vii)$ implies that
$T\setminus\left(I\cup\mathscr{C}_{\mathbb{Z}}\right)=
L_{\mathscr{C}_{\mathbb{Z}}}$. Let $\mathbb{Z}$ be the additive
group of integers. We define a map $\mathfrak{h}\colon
L_{\mathscr{C}_{\mathbb{Z}}}\rightarrow\mathbb{Z}$ as follows:
\begin{equation*}
\begin{split}
  (x)\mathfrak{h}=n \qquad &
     \hbox{if and only if there exists a neighbourhood }
     U(x) \hbox{ of } x \hbox{ in } T \hbox{ such that } \\
    & a-b=n, \hbox{ for all } (a,b)\in U(x)\cap\mathscr{C}_{\mathbb{Z}},
\end{split}
\end{equation*}
where $x\in L_{\mathscr{C}_{\mathbb{Z}}}$. We observe that
assertions $(i)-(v)$ imply that the map $\mathfrak{h}$ is well
defined. Also, Proposition~\ref{proposition-1} implies that
$\mathfrak{h}\colon L_{\mathscr{C}_{\mathbb{Z}}}
\rightarrow\mathbb{Z}$ is a monomorphism, and hence
$L_{\mathscr{C}_{\mathbb{Z}}}$ is a submonoid of $\mathbb{Z}$. In
the case when a maximal subgroup of $L_{\mathscr{C}_{\mathbb{Z}}}$
is non-singleton Proposition~\ref{proposition-9} implies that
$\left(L_{\mathscr{C}_{\mathbb{Z}}}\right)\mathfrak{h}$ is a cyclic
subgroup of $\mathbb{Z}$. This completes the proof of our assertion.

$(xi)$ Assertion $(v)$ implies that
\begin{equation*}
    {\uparrow}\mathscr{C}_{\mathbb{Z}}=\{x\in T\mid \hbox{there
    exists } y\in\mathscr{C}_{\mathbb{Z}} \hbox{ such that } x\cdot
    y\in\mathscr{C}_{\mathbb{Z}}\}=
    \bigcup_{(a,b)\in\mathscr{C}_{\mathbb{Z}}}{\uparrow}(a,b).
\end{equation*}
Then assertion $(viii)$ implies that
${\uparrow}\mathscr{C}_{\mathbb{Z}}$ is an open subset in $T$ and
hence by assertion $(vii)$ we get that the ideal $I$ is a closed
subset of $T$.

Assertion $(xii)$ follows from $(x)$.
\end{proof}


\section{On a closure of the
semigroup $\mathscr{C}_{\mathbb{Z}}$ in a locally compact
topological inverse semigroup}


For every non-negative integer $k$ by $k\mathbb{Z}$ we denote a
subgroup of the additive group of integers $\mathbb{Z}$ which is
generated by an element $k\in\mathbb{Z}$. We observe if $k=0$ then
the group $k\mathbb{Z}$ is trivial. Also, we denote $G_0=\mathbb{Z}$
and $G_1(k)=k\mathbb{Z}$ for a positive integer $k$.

The following five examples illustrate distinct structures of a
closure of the semigroup $\mathscr{C}_{\mathbb{Z}}$ in a locally
compact topological inverse semigroup.

\begin{example}\label{example-11}
Let be $S_1=G_1(0)\sqcup\mathscr{C}_{\mathbb{Z}}$. Then $G_1(0)$ is
a trivial group and we put $\{e_1\}=G_1(0)$. We extend the semigroup
operation from $\mathscr{C}_{\mathbb{Z}}$ onto $S_1$ as follows:
\begin{equation*}
     e_1\cdot(a,b)=(a,b)\cdot e_1=(a,b)\in\mathscr{C}_{\mathbb{Z}}
     \qquad \hbox{ and } \qquad e_1\cdot e_1=e_1,
\end{equation*}
i.e., $S_1$ is the semigroup $\mathscr{C}_{\mathbb{Z}}$ with the
adjoined unit $e_1$. We fix an arbitrary decreasing sequence
$\{m_i\}_{i\in\mathbb{N}}$ of negative integers and for every
positive integer $n$ we put
\begin{equation*}
    U_n(e_1)=\{e_1\}\cup\left\{(m_i,m_i)\in\mathscr{C}_{\mathbb{Z}}\mid
    i\geqslant n\right\}.
\end{equation*}
Then we determine a topology $\tau_1$ on $S_1$ as follows:
\begin{itemize}
  \item[1)] all elements of the semigroup $\mathscr{C}_{\mathbb{Z}}$
   are isolated points in $(S_1,\tau_1)$; \; and
  \item[2)] the family $\mathscr{B}_1(e_1)=\left\{U_n(e_1)\mid
   n\in\mathbb{N}\right\}$ is a base of the topology $\tau_1$ at the
   point $e_1\in G_1(0)\subseteq S_1$.
\end{itemize}

Then for every positive integer $n$ we have that
\begin{equation*}
    U_n(e_1)\cdot U_n(e_1)=U_n(e_1) \qquad \hbox{ and }
    \qquad \left(U_n(e_1)\right)^{-1}=U_n(e_1).
\end{equation*}
Let $(m,n)$ be an arbitrary element of the semigroup
$\mathscr{C}_{\mathbb{Z}}$. We fix a positive integer $i_{(m,n)}$
such that $m_{i_{(m,n)}}\leqslant m$ and $m_{i_{(m,n)}}\leqslant n$.
Then we have that
\begin{equation*}
    U_{i_{(m,n)}}(e_1)\cdot\{(m,n)\}=\{(m,n)\} \qquad \hbox{ and }
    \qquad \{(m,n)\}\cdot U_{i_{(m,n)}}(e_1)=\{(m,n)\}.
\end{equation*}
Hence we get that $(S_1,\tau_1)$ is a topological inverse semigroup.
Obviously, $(S_1,\tau_1)$ is a Hausdorff locally compact space.
\end{example}

\begin{example}\label{example-12}
Let $k$ and $n$ be any positive integers such that
$n\in\{1,\ldots,k\}$ is a divisor of $k$ and we put $k=n\cdot s$,
where $s$ is some positive integer. We put $S_2=G_1(k)\sqcup
\mathscr{C}_{\mathbb{Z}}$. Later an element of the group
$G_1(k)=k\mathbb{Z}$ will be denote by $ki$, where $i\in\mathbb{Z}$.
We extend the semigroup operation from $\mathscr{C}_{\mathbb{Z}}$
onto $S_2$ by the following way:
\begin{equation*}
    ki\cdot(a,b)=(-ki+a,b)\in\mathscr{C}_{\mathbb{Z}} \qquad
    \hbox{ and } \qquad (a,b)\cdot
    ki=(a,b+ki)\in\mathscr{C}_{\mathbb{Z}},
\end{equation*}
for arbitrary $(a,b)\in\mathscr{C}_{\mathbb{Z}}$ and $ki\in G_1(k)$.
To see that the extended binary operation is associative we need
only check six possibilities, the other being evident.

Then for arbitrary $ki_1, ki_2\in G_1(k)$ and $(a,b),
(c,d)\in\mathscr{C}_{\mathbb{Z}}$ we have that:
\begin{itemize}
  \item[$1)$] $(ki_1\cdot ki_2)\cdot(a,b)=(ki_1+ki_2)(a,b)=
   (-ki_1-ki_2+a,b)=
   ki_1\cdot(-ki_2+a,b)\quad$ $=ki_1\cdot\left(ki_2\cdot(a,b)\right)$;

  \item[$2)$] $(a,b)\cdot(ki_1\cdot ki_2)=(a,b)\cdot(ki_1+ki_2)=
   (a,b+ki_1+ki_2)=(a,b+ki_1)\cdot ki_2=\left((a,b)
   \cdot ki_1\right)\cdot ki_2$;

  \item[$3)$] $\left(ki_1\cdot(a,b)\right)\cdot ki_2=
   (-ki_1+a,b)\cdot ki_2=(-ki_1+a,b+ki_2)=ki_1\cdot(a,b+ki_2)\quad$
   $=ki_1\cdot\left((a,b)\cdot ki_2\right)$;

  \item[$4)$] $\left(ki_1\cdot(a,b)\right)\cdot(c,d)=
   (-ki_1+a,b)\cdot(c,d)=
   \left\{
     \begin{array}{ll}
       (-ki_1+a-b+c,d), & \hbox{if~~} b\leqslant c;\\
       (-ki_1+a,b-c+d), & \hbox{if~~} b\geqslant c
     \end{array}
   \right.\quad
   $ \newline
   $=
   \left\{
     \begin{array}{ll}
       ki_1\cdot(a-b+c,d), & \hbox{if~~} b\leqslant c;\\
       ki_1\cdot(a,b-c+d), & \hbox{if~~} b\geqslant c
     \end{array}
   \right.
   =ki_1\cdot\left((a,b)\cdot(c,d)\right)$;

  \item[$5)$] $\left((a,b)\cdot(c,d)\right)\cdot ki_1=
   \left\{
     \begin{array}{ll}
       (a-b+c,d)\cdot ki_1, & \hbox{if~~} b\leqslant c;\\
       (a,b-c+d)\cdot ki_1, & \hbox{if~~} b\geqslant c
     \end{array}
   \right.
   \quad $ \newline
   $=
   \left\{
     \begin{array}{ll}
       (a-b+c,d+ki_1), & \hbox{if~~} b\leqslant c;\\
       (a,b-c+d+ki_1), & \hbox{if~~} b\geqslant c
     \end{array}
   \right.
   =(a,b)\cdot\left(c,d+ki_1\right)=(a,b)\cdot\left((c,d)\cdot
   ki_1\right)$;

  \item[$6)$] $\left((a,b)\cdot ki_1\right)\cdot(c,d)=
   \left(a,b+ki_1\right)\cdot(c,d)=
   \left\{
     \begin{array}{ll}
       (a-b-ki_1+c,d), & \hbox{if~~} b+ki_1\leqslant c;\\
       (a,b+ik_1-c+d), & \hbox{if~~} b+ki_1\geqslant c
     \end{array}
   \right.
   $ \newline
   $=
   \left\{
     \begin{array}{ll}
       (a-b-ki_1+c,d), & \hbox{if~~} b\leqslant-ki_1+c;\\
       (a,b+ki_1-c+d), & \hbox{if~~} b\geqslant-ki_1+c
     \end{array}
   \right.
   =\left(a,b\right)\cdot(-ki_1+c,d)=
   (a,b)\cdot\left(ki_1\cdot(c,d)\right)$.
\end{itemize}
Also simple verifications show that $S_2$ is an inverse semigroup.

Let $ki$ be an arbitrary element of the group $G_1(k)$. For every
positive integer $j$ we denote
\begin{equation*}
    U_j^n(ki)=\{ki\}\cup\left\{(-nq,-nq+ki)\mid q\geqslant j,
    q\in\mathbb{N}\right\}.
\end{equation*}

We determine a topology $\tau_2$ on $S_2$ as follows:
\begin{itemize}
  \item[1)] all elements of the semigroup $\mathscr{C}_{\mathbb{Z}}$
   are isolated points in $(S_2,\tau_2)$; \; and
  \item[2)] the family $\mathscr{B}_2(ki)=\left\{U_j^n(ki)\mid
   j\in\mathbb{N}\right\}$ is a base of the topology $\tau_2$ at the
   point $ki\in G_1(k)\subseteq S_2$.
\end{itemize}

Then for every positive integer $j$ we have that
\begin{equation*}
    U_j^n(ki_1)\cdot U_{j-i_1s}^n(ki_2)\subseteq U_j^n(ki_1+ki_2)
    \qquad \hbox{ and } \qquad
    \left(U_j^n(ki_1)\right)^{-1}=U_j^n(-ki_1),
\end{equation*}
for $ki_1,ki_2\in G_1(k)$.

Let $(a,b)$ be an arbitrary element of the semigroup
$\mathscr{C}_{\mathbb{Z}}$ and $ki\in G_1(k)$. Then we have that
\begin{equation*}
    U_j^n(ki)\cdot\{(a,b)\}=\{(a-ki,b)\}
    \qquad \hbox{ and } \qquad
    \{(a,b)\}\cdot U_j^n(ki)=\{(a,b+ki)\},
\end{equation*}
for every positive integer $j$ such that
$nj\geqslant\max\{-b;ki-a\}$.

Therefore $(S_2,\tau_2)$ is a topological inverse semigroup, and
moreover the topological space $(S_2,\tau_2)$ is Hausdorff and
locally compact.
\end{example}

\begin{example}\label{example-13}
We put $S_3=\mathscr{C}_{\mathbb{Z}}\sqcup G_0$ and extend the
semigroup operation from the semigroup $\mathscr{C}_{\mathbb{Z}}$
onto $S_3$ by the following way:
\begin{equation*}
    (a,b)\cdot n=n\cdot(a,b)=n+b-a\in G_0,
\end{equation*}
for all $(a,b)\in\mathscr{C}_{\mathbb{Z}}$ and $n\in G_0$. To see
that the extended binary operation is associative we need only check
two possibilities, the other being evident.

Then for arbitrary $m,n\in G_0$ and $(a,b),
(c,d)\in\mathscr{C}_{\mathbb{Z}}$ we have that:
\begin{itemize}
  \item[$1)$] $\left(n\cdot(a,b)\right)\cdot(c,d)=(n+b-a)\cdot(c,d)=
   n+b-a+d-c=
   \left\{
     \begin{array}{ll}
       n\cdot(a-b+c,d), & \hbox{if~} b\leqslant c;\\
       n\cdot(a,b-c+d), & \hbox{if~} b\geqslant c
     \end{array}
   \right.
   $ \newline
   $=n\cdot\left((a,b)\cdot(c,d)\right)$;

  \item[$2)$] $(m\cdot n)\cdot(a,b)=m+n+b-a=m\cdot(n+b-a)=
   m\cdot\left(n\cdot(a,b)\right)$.
\end{itemize}
This completes the proof of the associativity of such defined binary
operation on $S_3$. Also, we observe that $S_3$ with such defined
semigroup operation is an inverse semigroup.

For every positive integer $n$ and every element $k\in G_0$ we put:
\begin{equation*}
 U_n(k)=
\left\{
  \begin{array}{ll}
    \{k\}\cup\left\{(a,a+k)\mid
   a=n,n+1,n+2,\ldots\right\}, & \hbox{if~} k\geqslant 0;\\
    \{k\}\cup\left\{(a-k,a)\mid
   a=n,n+1,n+2,\ldots\right\}, & \hbox{if~} k\leqslant 0.
  \end{array}
\right.
\end{equation*}

We determine a topology $\tau_3$ on $S_3$ as follows:
\begin{itemize}
  \item[1)] all elements of the semigroup $\mathscr{C}_{\mathbb{Z}}$
   are isolated points in $(S_3,\tau_3)$; \; and
  \item[2)] the family $\mathscr{B}_3(k)=\left\{U_n(k)\mid
   n\in\mathbb{N}\right\}$ is a base of the topology $\tau_3$ at the
   point $k\in G_0\subseteq S_3$.
\end{itemize}

Then for all $k_1,k_2\in G_0$ we have that
\begin{equation*}
    U_{2n}(k_1)\cdot U_{2n}(k_2)\subseteq U_{n}(k_1+k_2),
\end{equation*}
for every positive integer
$n\geqslant\max\left\{\left|k_1\right|,\left|k_2\right|\right\}$,
and
\begin{equation*}
\left(U_i(k_1)\right)^{-1}=U_i(-k_1),
\end{equation*}
for every positive integer $i$. Also, for arbitrary $(a,b)\in
\mathscr{C}_{\mathbb{Z}}$ and $k\in G_0$ we have that
\begin{equation*}
    (a,b)\cdot U_{2n}(k)\subseteq U_n(k+b-a) \qquad \hbox{ and }
    \qquad U_{2n}(k)\cdot (a,b)\subseteq U_n(k+b-a),
\end{equation*}
for every positive integer $n\geqslant\max\left\{\left|a\right|,
\left|b\right|, \left|k\right|\right\}$.

This completes the proof that $(S_3,\tau_3)$ is a topological
inverse semigroup. Obviously, $(S_3,\tau_3)$ is a Hausdorff locally
compact space.
\end{example}

\begin{example}\label{example-14}
Let be $S_4=G_1(0)\sqcup S_3$, where the group $G_1(0)$ and the
semigroup $S_3$ are defined in Example~\ref{example-11} and
Example~\ref{example-13}, respectively. We extend the semigroup
operation from $S_3$ onto $S_4$ as follows:
\begin{equation*}
    e_1\cdot x=x\cdot e_1=x\in\mathscr{C}_{\mathbb{Z}} \qquad
    \hbox{ and } \qquad e_1\cdot e_1=e_1,
\end{equation*}
i.e., $S_4$ is the semigroup $S_3$ with the adjoined unit $e_1$.

Let $\tau_4$ be a topology on $S_4$ which is generated by the family
$\tau_1\cup\tau_3$ (see Examples~\ref{example-11} and
\ref{example-13}). Then for every element $k_0\in G_0$ and every
positive integers $n_1$ and $n_0$ we have that the following
inclusions hold:
\begin{equation*}
    U_{n_1}(e_1)\cdot U_{n_0}(k_0)\subseteq U_{n_0}(k_0) \qquad
    \hbox{ and } \qquad U_{n_0}(k_0)\cdot U_{n_1}(e_1)\subseteq
    U_{n_0}(k_0),
\end{equation*}
where $U_{n_1}(e_1)\in\mathscr{B}_1(e_1)$ and $U_{n_0}(k_0)\in
\mathscr{B}_3(k_0)$ (see Examples~\ref{example-11} and
\ref{example-13}). These inclusions and Examples~\ref{example-11}
and \ref{example-13} imply that $(S_4,\tau_4)$ is a Hausdorff
topological inverse semigroup. Obviously, $(S_4,\tau_4)$ is a
locally compact space.
\end{example}

\begin{example}\label{example-15}
Let $k$ and $n$ be such positive integers as in
Example~\ref{example-12}. We put $S_5=G_1(k)
\sqcup\mathscr{C}_{\mathbb{Z}}\sqcup G_0$ and extend semigroup
operation from $S_2$ and $S_3$ onto $S_5$ as follows. Later we
denote elements of groups $G_1(K)$ and $G_0$ by $(ki)^1$ and
$(n)^0$, respectively. We put
\begin{equation*}
    (ki)^1\cdot(n)^0=(n)^0\cdot(ki)^1=(ki+n)^0\in G_0,
\end{equation*}
for all $(ki)^1\in G_1(k)$ and $(n)^0\in G_0$. To see that the
extended binary operation is associative we need only check twelve
possibilities, the other either are evident or are proved in
Examples~\ref{example-12} and ~\ref{example-13}.

Then for arbitrary $(ki_1)^1, (ki_2)^1\in G_1(k)$,
$(n_1)^0,(n_2)^0\in G_0$ and $(a,b)\in \mathscr{C}_{\mathbb{Z}}$ we
have that:
\begin{itemize}
  \item[1)] $\left((n_1)^0\cdot(n_2)^0\right)\cdot(ki_1)^1=
   \left(n_1+n_2\right)^0\cdot(ki_1)^1=\left(n_1+n_2+ki_1\right)^0=
   (n_1)^0\cdot\left(n_2+ki_1\right)^0$ \newline
   $=(n_1)^0\cdot\left((n_2)^0\cdot(ki_1)^1\right)$;

  \item[2)] $\left((n_1)^0\cdot(ki_1)^1\right)\cdot(n_2)^0=
   \left(n_1+ki_1\right)^0\cdot(n_2)^0=\left(n_1+ki_1+n_2\right)^0=
   (n_1)^0\cdot\left(ki_1+n_2\right)^0$ \newline
   $=(n_1)^0\cdot\left((ki_1)^1\cdot(n_2)^0\right)$;

  \item[3)] $\left((n_1)^0\cdot(ki_1)^1\right)\cdot(ki_2)^1=
   \left(n_1+ki_1\right)^0\cdot(ki_2)^1=\left(n_1+ki_1+ki_2\right)^0=
   (n_1)^0\cdot\left(ki_1+ki_2\right)^1$ \newline
   $=(n_1)^0\cdot\left((ki_1)^1\cdot(ki_2)^1\right)$;

  \item[4)] $\left((n_1)^0\cdot(ki_1)^1\right)\cdot(a,b)=
   \left(n_1+ki_1\right)^0\cdot(a,b)=\left(n_1+ki_1+b-a\right)^0=
   (n_1)^0\cdot\left(-ki_1+a,b\right)$ \newline
   $=(n_1)^0\cdot\left((ki_1)^1\cdot(a,b)\right)$;

  \item[5)] $\left((n_1)^0\cdot(a,b)\right)\cdot(ki_1)^1=
   \left(n_1+b-a\right)^0\cdot(ki_1)^1=\left(n_1+b-a+ki_1\right)^0=
   (n_1)^0\cdot\left(a,b+ki_1\right)$ \newline
   $=(n_1)^0\cdot\left((a,b)\cdot(ki_1)^1\right)$;

  \item[6)] $\left((ki_1)^1\cdot(n_1)^0\right)\cdot(n_2)^0=
   \left(ki_1+n_1\right)^0\cdot(n_2)^0=\left(ki_1+n_1+n_2\right)^0=
   (ki_1)^1\cdot\left(n_1+n_2\right)^0$ \newline
   $=(ki_1)^1\cdot\left((n_1)^0\cdot(n_2)^0\right)$;

  \item[7)] $\left((ki_1)^1\cdot(n_1)^0\right)\cdot(ki_2)^1=
   \left(ki_1+n_1\right)^0\cdot(ki_2)^1=\left(ki_1+n_1+ki_2\right)^0=
   (ki_1)^1\cdot\left(n_1+ki_2\right)^0$ \newline
   $=(ki_1)^1\cdot\left((n_1)^0\cdot(ki_2)^1\right)$;

  \item[8)] $\left((ki_1)^1\cdot(n_1)^0\right)\cdot(a,b)=
   \left(ki_1+n_1\right)^0\cdot(a,b)=\left(ki_1+n_1+b-a\right)^0=
   (ki_1)^1\cdot\left(n_1+b-a\right)^0$ \newline
   $=(ki_1)^1\cdot\left((n_1)^0\cdot(a,b)\right)$;

  \item[9)] $\left((ki_1)^1\cdot(ki_2)^1\right)\cdot(n_1)^0=
   \left(ki_1+ki_2\right)^1\cdot(n_1)^0=\left(ki_1+ki_2+n_1\right)^0=
   (ki_1)^1\cdot\left(ki_2+n_1\right)^0$ \newline
   $=(ki_1)^1\cdot\left((ki_2)^1\cdot(n_1)^0\right)$;

  \item[10)] $\left((ki_1)^1\cdot(a,b)\right)\cdot(n_1)^0=
   (-ki_1+a,b)\cdot(n_1)^0=\left(ki_1+b-a+n_1\right)^0$
   \newline
   $=(ki_1)^1\cdot\left(b-a+n_1\right)^0=
   (ki_1)^1\cdot\left((a,b)\cdot(n_1)^0\right)$;

  \item[11)] $\left((a,b)\cdot(n_1)^0\right)\cdot(ki_1)^1=
   (b-a+n_1)^0\cdot(ki_1)^1=\left(b-a+n_1+ki_1\right)^0=
   (a,b)\cdot\left(n_1+ki_1\right)^0$ \newline
   $=(a,b)\cdot\left((n_1)^0\cdot(ki_1)^1\right)$;

  \item[12)] $\left((a,b)\cdot(ki_1)^1\right)\cdot(n_1)^0=
   (a,b+ki_1)^0\cdot(n_1)^0=\left(b+ki_1-a+n_1\right)^0=
   (a,b)\cdot\left(ki_1+n_1\right)^0$ \newline
   $=(a,b)\cdot\left((ki_1)^1\cdot(n_1)^0\right)$.
\end{itemize}

This completes the proof of the associativity of such defined binary
operation on $S_5$. Also, we observe that $S_5$ with such defined
semigroup operation is an inverse semigroup.

Let $\tau_5$ be a topology on $S_5$ which is generated by the family
$\tau_2\cup\tau_3$ (see Examples~\ref{example-12} and
\ref{example-13}). Also Examples~\ref{example-12} and
\ref{example-13} imply that it is sufficient to show that the
semigroup operation in $S_5$ is continuous in cases $(ki)^1\cdot
(n)^0$ and $(n)^0\cdot(ki)^1$, where $(n)^0\in G_0$ and $(ki)^1\in
G_1(k)$. Then for every positive integer
$p\geqslant\max\left\{|ki|,|n|\right\}$ we have that
\begin{equation*}
 U_{2p}\!\left((ki)^1\right)\cdot U_{2p}\!\left((n)^0\right)\subseteq
 U_{p}\!\left((ki+n)^0\right) \qquad \hbox{ and } \qquad
 U_{2p}\!\left((n)^0\right)\cdot U_{2p}\!\left((ki)^1\right)\subseteq
 U_{p}\!\left((ki+n)^0\right).
\end{equation*}
This completes the proof that $(S_5,\tau_5)$ is a topological
inverse semigroup. Obviously, $(S_5,\tau_5)$ is a locally compact
space.
\end{example}



\begin{theorem}\label{theorem-16-0}
Let $T$ be a Hausdorff topological inverse semigroup. If $T$
contains $\mathscr{C}_{\mathbb{Z}}$ as a dense subsemigroup and
$I=T\setminus{\uparrow}\mathscr{C}_{\mathbb{Z}}\neq \varnothing$,
then the following assertions hold:
\begin{itemize}
  \item[$(i)$] $E(T)$ is a countable linearly ordered semilattice;

  \item[$(ii)$] $E(T)\cap\left(T\setminus
   {\uparrow}\mathscr{C}_{\mathbb{Z}}\right)$ is a singleton set;

  \item[$(iii)$] $T\setminus{\uparrow}\mathscr{C}_{\mathbb{Z}}$ is
   a subgroup in $T$.
\end{itemize}
\end{theorem}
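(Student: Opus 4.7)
The plan is to prove part $(ii)$ first, since parts $(i)$ and $(iii)$ follow once one knows that $E(T)\cap I$ is a singleton. As preliminary observations, I would note that $E(T)$ is closed in $T$ (it is the preimage of the diagonal under the continuous map $x\mapsto(x,x^{2})$), and that every $e\in E(T)$ is a limit of idempotents of $\mathscr{C}_{\mathbb{Z}}$: if $(a_\alpha,b_\alpha)\to e$ is a net in $\mathscr{C}_{\mathbb{Z}}$, then continuity of inversion gives $(a_\alpha,a_\alpha)=(a_\alpha,b_\alpha)(b_\alpha,a_\alpha)\to ee^{-1}=e$. Nonemptiness of $E(T)\cap I$ is immediate: pick any $a\in I$; then $aa^{-1}\in E(T)\cap I$ because $I$ is an ideal (Proposition~\ref{proposition-10}$(vii)$).

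The heart of the argument is the key claim: for every $e\in E(T)\cap I$ and every $n\in\mathbb{Z}$, $(n,n)\cdot e=e$. I would fix a net of idempotents $(a_\alpha,a_\alpha)\to e$ and analyse the behaviour of $\{a_\alpha\}$. If some subnet of $\{a_\alpha\}$ were bounded above by an integer $N$, then for any $n>N$ the semilattice formula $(p,p)(q,q)=(\max\{p,q\},\max\{p,q\})$ gives $(n,n)(a_\alpha,a_\alpha)=(n,n)$ along that subnet, forcing $(n,n)\cdot e=(n,n)$ and hence $(n,n)\leqslant e$; Lemma~\ref{lemma-6}$(ii)$ would then yield $e=1_T\in{\uparrow}\mathscr{C}_{\mathbb{Z}}$, contradicting $e\in I$. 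Thus we may pass to a subnet with $a_\alpha\geqslant n$ eventually, and $(n,n)(a_\alpha,a_\alpha)=(a_\alpha,a_\alpha)\to e$ yields $(n,n)e=e$. Uniqueness of the idempotent in $I$ now follows quickly: given $e,f\in E(T)\cap I$, choose a subnet $(a_\alpha,a_\alpha)\to e$ with $a_\alpha\to+\infty$; by the key claim applied to $f$, each $(a_\alpha,a_\alpha)\cdot f=f$, so $ef=f$ by continuity; symmetrically $fe=e$; and since idempotents commute in inverse semigroups, $e=f$. This proves $(ii)$.

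With $(ii)$ in hand, $(i)$ follows from the decomposition $E(T)\subseteq E(\mathscr{C}_{\mathbb{Z}})\cup\{e_0\}\cup\{1_T\}$, where $e_0$ is the unique idempotent of $I$ and $1_T$ denotes the unit of $T$ if one exists outside $\mathscr{C}_{\mathbb{Z}}$: an idempotent $e\in E(T)\setminus(\mathscr{C}_{\mathbb{Z}}\cup I)$ lies in $L_{\mathscr{C}_{\mathbb{Z}}}$, and multiplying $e\cdot(m,n)\in\mathscr{C}_{\mathbb{Z}}$ by $(n,m)$ on the right produces an idempotent $(q,q)\leqslant e$ in $\mathscr{C}_{\mathbb{Z}}$, whence $e=1_T$ by Lemma~\ref{lemma-6}$(ii)$. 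Countability is then obvious, and the linear order combines Proposition~\ref{proposition-1}$(i)$ with the inequalities $e_0\leqslant(n,n)\leqslant 1_T$ for all $n\in\mathbb{Z}$ supplied by the key claim and the unit property. For $(iii)$, $I$ is inverse-closed (inversion is continuous, $\mathscr{C}_{\mathbb{Z}}$ is inverse-closed, and the symmetry between $L_{\mathscr{C}_{\mathbb{Z}}}$ and $R_{\mathscr{C}_{\mathbb{Z}}}$ in Proposition~\ref{proposition-10}$(iv)$ yields $I^{-1}=I$), and is an ideal with unique idempotent $e_0$; hence $aa^{-1}=a^{-1}a=e_0$ for every $a\in I$, so $e_0 a=aa^{-1}a=a$ and $ae_0=a$, making $I$ a group with identity $e_0$.

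The main obstacle is the subnet analysis inside the key claim: one must rule out rigorously, using only that the $(a_\alpha,a_\alpha)$ are isolated points of $T$ (Corollary~\ref{corollary-4}) and the dichotomy of Lemma~\ref{lemma-6}$(ii)$, the a priori possibility that the sequence of indices $\{a_\alpha\}$ is bounded above or oscillating, and to deduce that it must cofinally exceed every integer. Everything else reduces either to standard semilattice/inverse-semigroup bookkeeping or to straightforward applications of results already proved in Section~2 and in Proposition~\ref{proposition-10}.
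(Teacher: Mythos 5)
Your proposal is correct, but it reaches the theorem by a route that differs from the paper's in parts $(i)$ and $(ii)$. The paper proves $(i)$ first by quoting two external results --- Proposition~II.3 of Eberhart--Selden (so that $\operatorname{cl}_T(E(\mathscr{C}_{\mathbb{Z}}))=E(T)$) and the Gutik--Repov\v{s} lemma that the closure of a linearly ordered subsemilattice is again a chain --- and then observes that at most one idempotent of $E(T)$ can lie below (resp.\ above) all of $E(\mathscr{C}_{\mathbb{Z}})$, deducing $(ii)$ as a corollary; for $(iii)$ it shows $I$ is the $\mathscr{H}$-class of the minimal idempotent $\overline{e}$ via Theorem~1.17 of Clifford--Preston. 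You instead prove $(ii)$ directly by a net argument: the key claim $(n,n)\cdot e=e$ for $e\in E(T)\cap I$, obtained by ruling out bounded subnets through Lemma~\ref{lemma-6}$(ii)$ (an idempotent of the remainder dominating some $(n,n)$ must be the unit, hence lies in ${\uparrow}\mathscr{C}_{\mathbb{Z}}$), and you then get $(i)$ by classifying the at most two idempotents outside $E(\mathscr{C}_{\mathbb{Z}})$ and exhibiting the chain $e_0\leqslant(n,n)\leqslant 1_T$ explicitly, and $(iii)$ from inverse-closedness of $I$ (via Proposition~\ref{proposition-10}$(iv)$), the ideal property $(vii)$, and uniqueness of the idempotent in $I$, giving $aa^{-1}=a^{-1}a=e_0$ for all $a\in I$. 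Your treatment of $(iii)$ is the same idea as the paper's, just implemented without Green's relations; your $(i)$--$(ii)$ are more self-contained, avoiding both external citations at the cost of the somewhat delicate subnet dichotomy, while the paper's version is shorter and, via $\operatorname{cl}_T(E(\mathscr{C}_{\mathbb{Z}}))=E(T)$, records slightly more structural information (which you recover anyway in your preliminary observation that every idempotent of $T$ is a limit of idempotents $(a_\alpha,a_\alpha)$ of $\mathscr{C}_{\mathbb{Z}}$). One cosmetic caveat: Proposition~\ref{proposition-10} is formally stated for topological monoids, whereas Theorem~\ref{theorem-16-0} does not assume a unit; however, the proofs of the items you invoke, $(iv)$ and $(vii)$, never use the unit, and the paper itself applies that proposition in the same unit-free setting, so this does not affect correctness.
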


\begin{proof}
$(i)$ By Proposition~II.3 from \cite{EberhartSelden1969} we have
that $\operatorname{cl}_{T}(E(\mathscr{C}_{\mathbb{Z}}))=E(T)$ and
since the closure of a linearly ordered subsemilattice in a
topological semilattice is a linearly ordered subsemilattice too
(see \cite[Lemma~1]{GutikRepovs2008}) we get that $E(T)$ is a
linearly ordered semilattice. Then the semilattice operation in
$E(T)$ implies that the sets
$E(T)\setminus\!\displaystyle\bigcup_{e\in
E(\mathscr{C}_{\mathbb{Z}})}\!{\downarrow}e$ and
$E(T)\setminus\!\displaystyle\bigcup_{e\in
E(\mathscr{C}_{\mathbb{Z}})}\!{\uparrow}e$ are either singleton or
empty. This completes the proof of our assertion.

Assertion $(ii)$ follows from assertion $(i)$.

$(iii)$ Since $T$ is an inverse semigroup and $\overline{e}$ is a
minimal idempotent in $E(T)$ we conclude that the
$\mathscr{H}$-class $H_{\overline{e}}$ which contains $\overline{e}$
coincides with the ideal $I= T\setminus
{\uparrow}\mathscr{C}_{\mathbb{Z}}$. Indeed, if there exist $x\in I$
and an $\mathscr{H}$-class $H_x\subseteq I$ in $T$ such that $x\in
H_x\neq H_{\overline{e}}$, then since $T$ is an inverse semigroup we
have that there exists an idempotent $e\in T$ such that either
$xx^{-1}=e\in {\uparrow}\mathscr{C}_{\mathbb{Z}}$ or $x^{-1}x=e\in
{\uparrow}\mathscr{C}_{\mathbb{Z}}$. If $xx^{-1}=e\in
{\uparrow}\mathscr{C}_{\mathbb{Z}}$, then we have that
$x=xx^{-1}x=ex\in eT$, and since $T$ is an inverse semigroup
Theorem~1.17 from~\cite{CP} implies $e\in xT$, a contradiction.
Similar arguments show that $x^{-1}x\neq e\in
{\uparrow}\mathscr{C}_{\mathbb{Z}}$. Hence assertion $(ii)$ implies
that $xx^{-1}=x^{-1}x=\overline{e}$ and hence $x\in H_x=
H_{\overline{e}}$.
\end{proof}

The following theorem describes the structure of a closure of the
semigroup $\mathscr{C}_{\mathbb{Z}}$ in a locally compact
topological inverse semigroup $T$, i.e., it gives the description of
the non-empty ideal $I=T\setminus{\uparrow}\mathscr{C}_{\mathbb{Z}}$
in the remainder of $\mathscr{C}_{\mathbb{Z}}$ in $T$.

\begin{theorem}\label{theorem-16}
Let $T$ be a Hausdorff locally compact topological inverse
semigroup. If $T$ contains $\mathscr{C}_{\mathbb{Z}}$ as a dense
subsemigroup and $I=T\setminus{\uparrow}\mathscr{C}_{\mathbb{Z}}\neq
\varnothing$, then the following assertions hold:
\begin{itemize}
  \item[$(i)$] ${\downarrow}e_n$ is a compact subsemilattice in
   $E(T)$ for every idempotent $e_n=(n,n)\in
   \mathscr{C}_{\mathbb{Z}}$, $n\in\mathbb{Z}$;

  \item[$(ii)$] $T\setminus{\uparrow}\mathscr{C}_{\mathbb{Z}}$ is
   isomorphic to the discrete additive group of integers;

  \item[$(iii)$] if $\overline{e}$ is a unit of
   $T\setminus{\uparrow}\mathscr{C}_{\mathbb{Z}}$, then the map
   $\mathfrak{h}\colon\mathscr{C}_{\mathbb{Z}}\rightarrow
   T\setminus{\uparrow}\mathscr{C}_{\mathbb{Z}}$ which is defined by
   the formula $((a,b))\mathfrak{h}=(a,b)\cdot\overline{e}$ is the
   natural homomorphism generated by the minimal group congruence
   $\mathfrak{C}_{mg}$ on the semigroup $\mathscr{C}_{\mathbb{Z}}$;

  \item[$(iv)$] the subsemigroup $S=\mathscr{C}_{\mathbb{Z}}\cup I$
   is topologically isomorphic to the topological inverse semigroup
   $(S_3,\tau_3)$ from Example~\ref{example-13}.
\end{itemize}
\end{theorem}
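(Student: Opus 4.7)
The plan is to prove parts (i)--(iv) in order, each building on its predecessor.

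For (i), I would first show that $e_m = (m,m) \to \overline{e}$ as $m \to +\infty$. Any cluster point $f$ of the sequence $\{e_m\}$ lies in the closed set $E(T)$, and by continuity of right multiplication together with $e_m \cdot e_n = e_m$ for $m \geqslant n$ one gets $f \cdot e_n = f$, hence $f \leqslant e_n$ in the linearly ordered semilattice $E(T)$ for every $n \in \mathbb{Z}$. Theorem~\ref{theorem-16-0}$(ii)$ forces $f = \overline{e}$. Local compactness then upgrades uniqueness of the cluster point to convergence: in a compact neighborhood $K$ of $\overline{e}$, any subsequence of $\{e_m\}$ sitting outside an open neighborhood of $\overline{e}$ would yield a cluster point different from $\overline{e}$, a contradiction. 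Hence ${\downarrow}e_n = \{e_m : m \geqslant n\} \cup \{\overline{e}\}$ is a convergent sequence together with its limit, and so is compact.

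For (ii) and (iii), I define $\mathfrak{h}(a,b) = (a,b) \cdot \overline{e}$. A direct calculation via formula~(\ref{f1}) shows that for large $m$ both $(a,b)\cdot e_m$ and $e_m \cdot (a,b)$ collapse to the common tail $\{(a+k, b+k)\}_{k \in \mathbb{N}}$, so Hausdorffness and part~(i) give the commutation $\overline{e}(a,b) = (a,b)\overline{e}$. This commutation immediately yields the homomorphism identity $\mathfrak{h}(a,b) \cdot \mathfrak{h}(c,d) = (a,b)(c,d)\overline{e} = \mathfrak{h}((a,b)(c,d))$. The same sequence description yields $\mathfrak{h}(a,b) = \mathfrak{h}(c,d) \Leftrightarrow a - b = c - d$, so $\ker \mathfrak{h} = \mathfrak{C}_{mg}$; thus $H := \mathfrak{h}(\mathscr{C}_{\mathbb{Z}}) \cong \mathbb{Z}$. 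By continuity of right multiplication by $\overline{e}$ and density of $\mathscr{C}_{\mathbb{Z}}$ in $T$, we get $\overline{H} \supseteq T\overline{e} = I$, so $H$ is dense in $I$.

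To finish (ii), I must show $I$ is discrete; then density forces $H = I \cong \mathbb{Z}$. The key claim is that $\overline{e}$ admits a neighborhood $V$ in $T$ with $V \cap \mathscr{C}_{\mathbb{Z}} \subseteq E(\mathscr{C}_{\mathbb{Z}})$. Granted this, any $x \in V \cap I$ with $x \neq \overline{e}$ would be a limit of idempotents from $V$, so $x \in E(T) \cap I = \{\overline{e}\}$ by Theorem~\ref{theorem-16-0}$(ii)$, absurd; thus $\overline{e}$ is isolated in $I$, and by homogeneity $I$ is discrete. For the claim I argue by contradiction with a sequence $(a_k, b_k) \to \overline{e}$ having $a_k \neq b_k$. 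In the bounded-slope case, a subsequence of constant slope $s \neq 0$ would give $(b_k + s, b_k) \to \overline{e}$ while simultaneously $(b_k + s, b_k) \to \mathfrak{h}(s, 0) = h_s \neq \overline{e}$ (by the sequence description of $\mathfrak{h}$), contradicting Hausdorffness. The unbounded-slope case is the \textbf{main obstacle}: applying right multiplication by $\overline{e}$ sends the sequence into the compact set $K \cdot \overline{e}$ and produces the cluster relations $h_{s_k + j} \to h_j$ for every $j$; a careful gap-analysis on $\{s_k\}$, combined with continuity of multiplication by each $h_j$, is needed to rule out any exotic dense-cyclic-subgroup structure on a compact-like $I$ and thereby to extract the required contradiction.

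For (iv), define $\Phi\colon S = \mathscr{C}_{\mathbb{Z}} \cup I \to S_3 = \mathscr{C}_{\mathbb{Z}} \sqcup G_0$ by the identity on $\mathscr{C}_{\mathbb{Z}}$ and $\mathfrak{h}(a,b) \mapsto b - a \in G_0$. The algebraic isomorphism is a routine check against the operation in Example~\ref{example-13} using Proposition~\ref{proposition-1}$(vi)$; topologically, the convergence $(a, a+k) \to \mathfrak{h}(0, k)$ in $T$ (from part~(ii)) together with Corollary~\ref{corollary-4} matches precisely the basic neighborhoods $U_n(k) \in \mathscr{B}_3(k)$ at each $k \in G_0$, so $\Phi$ is a homeomorphism.
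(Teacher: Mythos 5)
Your proposal has two genuine gaps, and they sit exactly at the places where the paper's proof does real work. First, in part $(i)$: the step ``local compactness upgrades uniqueness of the cluster point to convergence'' is not valid. A subsequence of $\{e_m\}$ lying outside an open neighbourhood of $\overline{e}$ need not lie in your chosen compact neighbourhood $K$, and in a locally compact but non-compact space such a subsequence may have no cluster point at all (it can be closed and discrete in $T$, escaping to infinity). Compactness of ${\downarrow}e_0$, equivalently $e_m\to\overline{e}$, is precisely the nontrivial content of $(i)$: the paper excludes the escape scenario by taking a compact open neighbourhood $U(\overline{e})$ inside the closed semilattice $E(T)$, extracting an infinite set $M$ of idempotents $(j+1,j+1)\in U(\overline{e})$ with $(j,j)\notin U(\overline{e})$, and observing that the continuous map $x\mapsto(0,1)\cdot x\cdot(1,0)$ sends $\operatorname{cl}_T(M)\ni\overline{e}$ into the closed set ${\downarrow}e_0\setminus U(\overline{e})$, a contradiction. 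Nothing in your sketch replaces this argument; you have essentially assumed the conclusion of $(i)$.

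Second, in parts $(ii)$--$(iii)$: the claim that ``the same sequence description yields $\mathfrak{h}(a,b)=\mathfrak{h}(c,d)\Leftrightarrow a-b=c-d$'' proves only the implication $a-b=c-d\Rightarrow\mathfrak{h}(a,b)=\mathfrak{h}(c,d)$. The converse (equivalently $h_s\neq\overline{e}$ for $s\neq 0$) is not automatic: by Proposition~\ref{proposition-2} the image of $\mathscr{C}_{\mathbb{Z}}$ in the group $I$ is a priori only a cyclic group, possibly finite or trivial, so $\ker\mathfrak{h}$ could be strictly coarser than $\mathfrak{C}_{mg}$. In the paper this injectivity is assertion $(iii)$ and is deduced only \emph{after} $(ii)$ is known. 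Your bounded-slope case already invokes the unproven $h_s\neq\overline{e}$, and you explicitly leave the unbounded-slope case open; but that case is exactly where one must exclude a compact (monothetic-type) or finite remainder, and your neighbourhood claim for $\overline{e}$ cannot be obtained by elementary gap-analysis alone. The paper's route is different and essential here: $I$ is a closed, hence locally compact, topological group containing a dense cyclic subgroup, so by the cited Pontryagin--Weil theorem it is either compact or discrete; compactness is excluded because the Rees quotient $S/I$ with $S=\mathscr{C}_{\mathbb{N}}[0]\cup I$ would then be a locally compact topological inverse semigroup isomorphic to the bicyclic semigroup with adjoined zero, contradicting Proposition~V.3 of Eberhart--Selden. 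Without this (or an equivalent substitute), your proofs of $(ii)$, $(iii)$ and consequently $(iv)$ do not go through; part $(iv)$, granted $(ii)$ and $(iii)$, is in line with the paper's argument.
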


\begin{proof}
$(i)$ We show that ${\downarrow}e_0$ is a compact subset in $E(T)$
for $e_0=(0,0)$. By assertion $(ii)$ of Theorem~\ref{theorem-16-0}
we get that the set
$E(T)\cap\left(T\setminus{\uparrow}\mathscr{C}_{\mathbb{Z}}\right)$
is singleton and we put $\left\{\overline{e}\right\}= E(T)\cap
\left(T\setminus {\uparrow}\mathscr{C}_{\mathbb{Z}}\right)$. Then
$\overline{e}$ is a smallest idempotent in $E(T)$. By Theorem~1.5
from \cite[Vol.~1]{CHK} we have that $E(T)$ is a closed subset in
$T$, and hence by Theorem~3.3.9 from \cite{Engelking1989} we get
that $E(T)$ is a locally compact space. Suppose the contrary:
${\downarrow}e_0$ is not a compact subset in $E(T)$. Since
Corollary~\ref{corollary-4} implies that every element of the
semigroup $\mathscr{C}_{\mathbb{Z}}$ is an isolated point in $T$ and
hence so it is in $E(T)$, we get that there exists an open
neighbourhood $U(\overline{e})$ of $\overline{e}$ in $E(T)$ such
that the set ${\downarrow}e_0\setminus U(\overline{e})$ is an
infinite discrete subspace of $E(T)$, $U(\overline{e})\subseteq
E(T)\setminus{\uparrow}e_0$ and
$\operatorname{cl}_{E(T)}(U(\overline{e}))=U(\overline{e})$ is a
compact subset of $E(T)$. Then for every positive integer $i$ there
exists an integer $j\geqslant i$ such that $(j,j)\notin
U(\overline{e})$ and $(j+1,j+1)\in U(\overline{e})$. Then the
semigroup operation in $\mathscr{C}_{\mathbb{Z}}$ implies that by
induction we can construct an infinite subset $M\subseteq
{\downarrow}e_0\setminus\left\{\overline{e}\right\}$ of $E(T)$ such
that $M\subseteq U(\overline{e})\setminus \left\{ \overline{e}
\right\}$ and $\{(0,1)\}\cdot M\cdot\{(1,0)\}\subseteq
{\downarrow}e_0 \setminus U(\overline{e})$. Since the set
$U(\overline{e})$ is compact and the set $M\subseteq
U(\overline{e})\setminus \left\{ \overline{e}\right\}$ contains only
isolated points from $E(\mathscr{C}_{\mathbb{Z}})$, we conclude that
$\overline{e}\in \operatorname{cl}_{T}(M)$. Since
${\downarrow}e_0\setminus U(\overline{e})$ is a closed subset of
$E(T)$ we have that the continuity of the semigroup operation in $T$
and Proposition~1.4.1 from \cite{Engelking1989} imply that
\begin{equation*}
 \overline{e}\in \{(0,1)\}\cdot\operatorname{cl}_{T}(M)
 \cdot\{(1,0)\}\subseteq \operatorname{cl}_{T}\left(\{(0,1)\}\cdot M
 \cdot\{(1,0)\}\right)\subseteq{\downarrow}e_0\setminus
 U(\overline{e}),
\end{equation*}
which contradicts $\overline{e}\in U(\overline{e})$. The obtained
contradiction implies that the set ${\downarrow}e_0\setminus
U(\overline{e})$ is finite, and hence the set ${\downarrow}e_0$ is
compact. Since for every integer $n$ the set
${\downarrow}e_n\setminus{\downarrow}e_0$ is either finite or empty
and $e_n$ is an isolated point in $E(T)$ we conclude that
${\downarrow}e_n$ is a compact subsemilattice of $E(T)$.

$(ii)$ By assertion $(i)$ we have that $\overline{e}$ is an
accumulation point of the subsemigroup $\mathscr{C}_{\mathbb{N}}[0]$
in $T$. Since by Theorem~3.3.9 from \cite{Engelking1989} a closed
subset of a locally compact space is a locally compact subspace too,
and by Proposition~\ref{proposition-1}$(viii)$ the semigroup
$\mathscr{C}_{\mathbb{N}}[0]$ is  isomorphic to the bicyclic
semigroup, Proposition~V.3 from \cite{EberhartSelden1969} implies
that the subset
$\operatorname{cl}_{T}\left(\mathscr{C}_{\mathbb{N}}[0]\right)\setminus
\mathscr{C}_{\mathbb{N}}[0]$ is a non-singleton subgroup of $T$. By
Corollary~\ref{corollary-4} we get that $\mathscr{C}_{\mathbb{Z}}$
is an open discrete subsemigroup of $T$ and hence we get that
$\operatorname{cl}_{T}\left(\mathscr{C}_{\mathbb{N}}[0]\right)\setminus
\mathscr{C}_{\mathbb{N}}[0]\subseteq
\operatorname{cl}_{T}\left(\mathscr{C}_{\mathbb{Z}}\right)\setminus
\mathscr{C}_{\mathbb{Z}}$.

By assertion $(iii)$ of Theorem~\ref{theorem-16-0} we have that
$I=T\setminus {\uparrow}\mathscr{C}_{\mathbb{Z}}$ is a non-singleton
subgroup in $T$. Since $T$ is a topological inverse semigroup we get
that $I$ is a topological group. Then by
Proposition~\ref{proposition-10}$(xi)$ we have that $I$ is a closed
subset of $T$ and hence by Theorem~3.3.9 from \cite{Engelking1989}
we get that $I$ is a locally compact topological group.

Later we show that $(a,b)\cdot\overline{e}=\overline{e}\cdot(a,b)$
for every $(a,b)\in\mathscr{C}_{\mathbb{Z}}$. Suppose the contrary:
there exists $(a,b)\in\mathscr{C}_{\mathbb{Z}}$ such that
$(a,b)\cdot\overline{e}\neq\overline{e}\cdot(a,b)$. Without loss of
generality we can assume that $a\leqslant b$ in $\mathbb{Z}$. Then
the Hausdorffness of the space $T$ implies that there exist open
neighbourhoods $U((a,b)\cdot\overline{e})$ and
$U(\overline{e}\cdot(a,b))$ of the points $(a,b)\cdot\overline{e}$
and $\overline{e}\cdot(a,b)$ in $T$ such that
$U((a,b)\cdot\overline{e})\cap U(\overline{e}\cdot(a,b))=
\varnothing$. Then the continuity of the semigroup operation of $T$
implies that there exists an open neighbourhood $V(\overline{e})$ of
$\overline{e}$ in $T$ such that the following conditions hold:
\begin{equation*}
    \{(a,b)\}\cdot V(\overline{e})\subseteq
    U((a,b)\cdot\overline{e}) \qquad \hbox{ and } \qquad
    V(\overline{e})\cdot\{(a,b)\}\subseteq
    U(\overline{e}\cdot(a,b)).
\end{equation*}
By assertion $(i)$ we get that without loss of generality we can
assume that $V(\overline{e})\cap E(T)$ is a compact subset in $T$
and there exists a positive integer $n_0\geqslant\max\{a,b\}$ such
that $(n,n)\in V(\overline{e})\cap E(T)$ for all integers
$n\geqslant n_0$. Then for $n=2n_0-a$ and $k=2n_0-b$ we get that
$(n,n),(k,k)\in V(\overline{e})\cap E(T)$. But we have
\begin{equation*}
(a,b)\cdot(n,n)=(a,b)\cdot(2n_0-a,2n_0-a)=(2n_0-a-b+a,2n_0-a)=
(2n_0-b,2n_0-a)
\end{equation*}
and
\begin{equation*}
(k,k)\cdot(a,b)=(2n_0-b,2n_0-b)\cdot(a,b)=(2n_0-b,2n_0-b-a+b)=
(2n_0-b,2n_0-a),
\end{equation*}
which contradicts $U((a,b)\cdot\overline{e})\cap
U(\overline{e}\cdot(a,b))= \varnothing$. The obtained contradiction
implies that $(a,b)\cdot\overline{e}=\overline{e}\cdot(a,b)$ for
every $(a,b)\in\mathscr{C}_{\mathbb{Z}}$.

Next we show that $x\cdot\overline{e}=\overline{e}\cdot x$ for every
$x\in T\setminus\mathscr{C}_{\mathbb{Z}}$. Suppose contrary: there
exists $x\in T\setminus\mathscr{C}_{\mathbb{Z}}$ such that
$x\cdot\overline{e}\neq\overline{e}\cdot x$. Then the Hausdorffness
of the space $T$ implies that there exist open neighbourhoods
$U(x\cdot\overline{e})$ and $U(\overline{e}\cdot x)$ of the points
$x\cdot\overline{e}$ and $\overline{e}\cdot x$ in $T$ such that
$U(x\cdot\overline{e})\cap U(\overline{e}\cdot x)= \varnothing$. The
continuity of the semigroup operation of $T$ implies that there
exists an open neighbourhood $V(x)$ of $x$ in $T$ such that the
following conditions hold:
\begin{equation*}
    V(x)\cdot\left\{\overline{e}\right\}\subseteq
    U(x\cdot\overline{e}) \qquad \hbox{ and } \qquad
    \left\{\overline{e}\right\}\cdot V(x)\subseteq
    U(\overline{e}\cdot x).
\end{equation*}
Since $\mathscr{C}_{\mathbb{Z}}$ is a dense subsemigroup of $T$ we
conclude that there exists $(a,b)\in\mathscr{C}_{\mathbb{Z}}$ such
that $(a,b)\in V(x)$. Then we get that $(a,b)\cdot\overline{e}=
\overline{e}\cdot (a,b)$, which contradicts
$U(x\cdot\overline{e})\cap U(\overline{e}\cdot x)= \varnothing$. The
obtained contradiction implies that
$x\cdot\overline{e}=\overline{e}\cdot x$ for every $x\in T$.

We define a map $\mathfrak{h}\colon T\rightarrow I$ by the formula
$(x)\mathfrak{h}=x\cdot\overline{e}$. Since $x\cdot\overline{e}=
\overline{e}\cdot x$ for every $x\in T$ we get that $\mathfrak{h}$
is a homomorphism. Since $\mathscr{C}_{\mathbb{Z}}$ is a dense
subsemigroup of $T$, Proposition~\ref{proposition-2} and assertion
$(iii)$ of Theorem~\ref{theorem-16-0} imply that the topological
group $I$ contains a dense cyclic subgroup. Since $I$ is a locally
compact topological group, Pontryagin-Weil Theorem (see \cite[p.~71,
Theorem~19]{Morris1977}) implies that either $I$ is compact or $I$
is discrete. If $I$ is compact, then by
Proposition~\ref{proposition-10}$(viii)$ we get that
\begin{equation*}
    S=T\setminus\bigcup_{(a,b)\notin
    \mathscr{C}_{\mathbb{N}}[0]}{\uparrow}(a,b)
\end{equation*}
is a closed subset in $T$. Then by Theorem~3.3.9 from
\cite{Engelking1989} $S$ is a locally compact space. Obviously,
$S=\mathscr{C}_{\mathbb{N}}[0]\cup I$. Since $I$ is a locally
compact ideal in $T$, Proposition~\ref{proposition-1}$(viii)$ and
Proposition~II.4 from \cite{EberhartSelden1969} imply that the Rees
quotient semigroup $S/I$ with  the quotient topology is locally
compact topological inverse semigroup which is isomorphic to the
bicyclic semigroup with an adjoined zero. This contradicts
Proposition~V.3 from \cite{EberhartSelden1969}. The obtained
contradiction implies that the group $I$ is discrete and hence $I$
is a discrete additive group of integers.

$(iii)$ Let $(a,b),(c,d)\in\mathscr{C}_{\mathbb{Z}}$ such that
$(a,b)\mathfrak{C}_{mg}(c,d)$. Then there exists an idempotent
$(n,n)\in\mathscr{C}_{\mathbb{Z}}$ such that $(a,b)\cdot(n,n)=
(c,d)\cdot(n,n)$. Since $(i,i)\cdot\overline{e}=\overline{e}$ for
every idempotent $(i,i)\in\mathscr{C}_{\mathbb{Z}}$ we get that
$((a,b))\mathfrak{h}=((c,d))\mathfrak{h}$.

Let $(a,b),(c,d)\in\mathscr{C}_{\mathbb{Z}}$ such that
$((a,b))\mathfrak{h}=((c,d))\mathfrak{h}$. Suppose the contrary:
$(a,b)\cdot(n,n)\neq(c,d)\cdot(n,n)$ for any idempotent
$(n,n)\in\mathscr{C}_{\mathbb{Z}}$. If
$(a,b)\cdot(n_1,n_1)=(c,d)\cdot(n_2,n_2)$ for some idempotents
$(n_1,n_1),(n_2,n_2)\in\mathscr{C}_{\mathbb{Z}}$, then we have that
\begin{equation*}
\begin{split}
  (a,b)\cdot(n_1,n_1)\cdot(n_2,n_2)= &\;
  (a,b)\cdot(n_1,n_1)\cdot(n_1,n_1)\cdot(n_2,n_2)\\
   = &\;
  (c,d)\cdot(n_2,n_2)\cdot(n_1,n_1)\cdot(n_2,n_2)\\
   = &\; (c,d)\cdot(n_1,n_1)\cdot(n_2,n_2).
\end{split}
\end{equation*}
Therefore we get that $(a,b)\cdot(n_1,n_1)\neq(c,d)\cdot(n_2,n_2)$
for all idempotents $(n_1,n_1),(n_2,n_2)\in
\mathscr{C}_{\mathbb{Z}}$. Then
Proposition~\ref{proposition-1}$(vi)$ implies that $b-a\neq d-c$,
and hence by the proof of Proposition~\ref{proposition-2} we get
that the congruence on the semigroup $\mathscr{C}_{\mathbb{Z}}$
which is generated by the homomorphism $\mathfrak{h}$ distincts from
the minimal group congruence $\mathfrak{C}_{mg}$ on
$\mathscr{C}_{\mathbb{Z}}$. Then the ideal $I$ is not isomorphic to
the additive group of integers $\mathbb{Z}$ and hence by
Proposition~\ref{proposition-2} we have that the ideal $I$ contains
a finite cyclic group. This contradicts assertion $(ii)$. The
obtained contradiction implies our assertion.

$(iv)$ Assertions~$(ii)$ and $(iii)$ imply that the subsemigroup
$S=\mathscr{C}_{\mathbb{Z}}\cup I$ of $T$ is algebraically
isomorphic to the inverse semigroup  $S_3$ from
Example~\ref{example-13}. We identify the group $I$ with $G_0$ and
put $\overline{e}=0\in G_0$.

By $\tau$ we denote the topology of the topological inverse
semigroup $T$. Since $G_0$ is a discrete subgroup of $T$,
assertion~$(i)$ implies that there exists a compact open
neighbourhood $U(0)$ of $0$ in $T$ with the following property:
\begin{itemize}
  \item[] $U(0)\subseteq E(T)$ and there is a positive integer
   $n_0$ such that $n_0=\max\{(n,n)\in
   E(\mathscr{C}_{\mathbb{Z}})\mid(n,n)\in U(0)\}$ and $(i,i)\in
   U(0)$ for all integers $i\geqslant n_0$.
\end{itemize}

Hence, we get that $\mathscr{B}_3(0)=\{U_n(0)\mid n\in\mathbb{N}\}$
is a base of the topology of the space $T$ at the point $0\in
G_0\subseteq T$, where $U_n(0)=\{0\}\cup\{(n+i,n+i)\mid
i\in\mathbb{N}\}$.

We fix an arbitrary element $k\in G_0$. Without loss of generality
we can assume that $k\geqslant 0$. Then $k^{-1}=-k\in
\mathbb{Z}=G_0$. Since $G_0$ is a discrete subgroup of $T$, the
continuity of the homomorphism $\mathfrak{h}\colon T\rightarrow
G_0\colon x\mapsto x\cdot\overline{e}=x\cdot 0$ implies that
$(k)\mathfrak{h}^{-1}$ is an open subset in $T$. We observe that,
since the homomorphism $\mathfrak{h}$ generates the minimal group
congruence on $\mathscr{C}_{\mathbb{Z}}$ (see assertion $(iii)$) we
get that $(k)\mathfrak{h}^{-1}\cap \mathscr{C}_{\mathbb{Z}}
=\{(a,b)\in\mathscr{C}_{\mathbb{Z}}\mid b-a=k\}$. Also, since
\begin{equation*}
    {\uparrow}(a,b)=\{(x,y)\in\mathscr{C}_{\mathbb{Z}}\mid
    (x,y)\cdot(b,b)=(a,b)\},
\end{equation*}
for every $(a,b)\in\mathscr{C}_{\mathbb{Z}}$,
Proposition~\ref{proposition-10}$(viii)$ implies that
${\uparrow}(a,b)$ is a closed-and-open subset in $T$ for every
$(a,b)\in\mathscr{C}_{\mathbb{Z}}$. Hence we get that
$\{k\}\cup\{(i,i+k)\in\mathscr{C}_{\mathbb{Z}}\mid i=1,2,3,\ldots\}$
is an open subset in $T$.

We fix an arbitrary positive integer $i$. Since $(i+k,i)\cdot k=0\in
G_0$, the continuity of the semigroup operation in $T$ implies that
for every $U_i(0)\in\mathscr{B}_3(0)$ there exists an open
neighbourhood
\begin{equation*}
V(k)\subseteq \{k\}\cup\{(i,i+k)\in\mathscr{C}_{\mathbb{Z}}\mid
i=1,2,3,\ldots\}
\end{equation*}
of $k$ in $T$ such that $(i+k,i)\cdot V(k)\subseteq U_i(0)$. Then
the semigroup operation of $\mathscr{C}_{\mathbb{Z}}$ implies that
$V(k)\subseteq U_i(k)$ for $U_i(k)\in\mathscr{B}_3(k)$.

We observe that for every $k\in G_0$ and for every positive integer
$i$ we have that
\begin{equation*}
    0\cdot (i,k+i)=k \qquad \hbox{ and } \qquad
    U_i(0)\cdot\{(i,i+k)\}=U_ i(k),
\end{equation*}
where $U_i(0)\in\mathscr{B}_3(0)$ and $U_i(k)\in\mathscr{B}_3(k)$.
Then the continuity of the semigroup operation in $T$ implies that
for every open neighbourhood $W(k)$ of $k$ in $T$ there exists
$U_i(0)\in\mathscr{B}_3(0)$ such that
\begin{equation*}
    U_i(0)\cdot\{(i,i+k)\}=U_ i(k)\subseteq W(k).
\end{equation*}
This implies that the bases of topologies $\tau$ and $\tau_3$ at the
point $k\in T$ coincide.

In the case when $k<0$ the proof is similar. This completes the
proof of our assertion.
\end{proof}

Theorem~\ref{theorem-16} implies the following:

\begin{corollary}\label{corollary-17}
Let $T$ be a Hausdorff locally compact topological inverse
semigroup. If $T$ contains $\mathscr{C}_{\mathbb{Z}}$ as a dense
subsemigroup such that $I=
T\setminus{\uparrow}\mathscr{C}_{\mathbb{Z}}\neq \varnothing$ and
${\uparrow}\mathscr{C}_{\mathbb{Z}}=\mathscr{C}_{\mathbb{Z}}$, then
$T$ is topologically isomorphic to the topological inverse semigroup
$(S_3,\tau_3)$ from Example~\ref{example-13}.
\end{corollary}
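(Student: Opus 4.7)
The plan is to reduce Corollary~\ref{corollary-17} directly to assertion~$(iv)$ of Theorem~\ref{theorem-16}. The extra hypothesis ${\uparrow}\mathscr{C}_{\mathbb{Z}}=\mathscr{C}_{\mathbb{Z}}$ has been added specifically so that the ambient semigroup $T$ coincides with the subsemigroup $S=\mathscr{C}_{\mathbb{Z}}\cup I$ studied in that theorem; once this identification is made, the conclusion is simply a restatement of Theorem~\ref{theorem-16}$(iv)$.

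First I would observe that, by Proposition~\ref{proposition-10}$(v)$, the set ${\uparrow}\mathscr{C}_{\mathbb{Z}}$ is a subsemigroup of $T$, and by Proposition~\ref{proposition-10}$(vii)$ the complementary set $I=T\setminus{\uparrow}\mathscr{C}_{\mathbb{Z}}$ is an ideal, so that as sets one has the disjoint union
\begin{equation*}
    T={\uparrow}\mathscr{C}_{\mathbb{Z}}\cup I.
\end{equation*}
Under the assumption ${\uparrow}\mathscr{C}_{\mathbb{Z}}=\mathscr{C}_{\mathbb{Z}}$, this becomes $T=\mathscr{C}_{\mathbb{Z}}\cup I=S$, where $S$ is the subsemigroup appearing in Theorem~\ref{theorem-16}$(iv)$.

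Next I would verify that $T$ satisfies all hypotheses of Theorem~\ref{theorem-16}: $T$ is a Hausdorff locally compact topological inverse semigroup containing $\mathscr{C}_{\mathbb{Z}}$ as a dense subsemigroup with $I\neq\varnothing$. Applying assertion~$(iv)$ of that theorem yields a topological isomorphism between $S=\mathscr{C}_{\mathbb{Z}}\cup I$ and $(S_3,\tau_3)$. Since $T=S$ by the preceding paragraph, this is exactly the required topological isomorphism $T\cong(S_3,\tau_3)$.

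I do not anticipate a genuine obstacle here; the only minor point worth being careful about is ensuring that the subspace topology on $S\subseteq T$ really is the topology used in Theorem~\ref{theorem-16}$(iv)$. But since under the extra hypothesis $S$ equals $T$ on the nose, there is nothing to check: the two topologies coincide by definition, and the statement of the corollary follows at once.
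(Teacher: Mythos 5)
Your proposal is correct and matches the paper's intent exactly: the paper gives no separate argument, stating only that Theorem~\ref{theorem-16} implies the corollary, and your reduction (using ${\uparrow}\mathscr{C}_{\mathbb{Z}}=\mathscr{C}_{\mathbb{Z}}$ to identify $T$ with $S=\mathscr{C}_{\mathbb{Z}}\cup I$ and then invoking Theorem~\ref{theorem-16}$(iv)$) is precisely that implication spelled out.
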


\begin{theorem}\label{theorem-18}
Let $(T,\tau)$ be a Hausdorff locally compact topological inverse
monoid with unit $1_T$. If $\mathscr{C}_{\mathbb{Z}}$ is a dense
subsemigroup of $T$ such that ${\uparrow}\mathscr{C}_{\mathbb{Z}}=T$
and the group of units of $T$ is singleton, then there exists a
decreasing sequence of negative integers $\{m_i\}_{i\in\mathbb{N}}$
such that $(T,\tau)$ is topologically isomorphic to the semigroup
$(S_1,\tau_1)$ from Example~\ref{example-11}.
\end{theorem}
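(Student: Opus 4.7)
The plan is to first establish the algebraic structure $T=\mathscr{C}_{\mathbb{Z}}\cup\{1_T\}$ (matching $S_1$), and then to show the topology at $1_T$ coincides with the one-point compactification of a discrete sequence of diagonal idempotents, matching $\tau_1$.

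For the algebraic part, the hypothesis ${\uparrow}\mathscr{C}_{\mathbb{Z}}=T$ gives $I=\varnothing$ and hence $T=\mathscr{C}_{\mathbb{Z}}\cup L_{\mathscr{C}_{\mathbb{Z}}}$ by Proposition~\ref{proposition-10}$(v)$. Since $T$ is an inverse semigroup, if $x\in L_{\mathscr{C}_{\mathbb{Z}}}$ with $xy\in\mathscr{C}_{\mathbb{Z}}$ for some $y\in\mathscr{C}_{\mathbb{Z}}$, then $(xy)^{-1}=y^{-1}x^{-1}\in\mathscr{C}_{\mathbb{Z}}$, so $x^{-1}\in R_{\mathscr{C}_{\mathbb{Z}}}=L_{\mathscr{C}_{\mathbb{Z}}}$ by Proposition~\ref{proposition-10}$(iv)$; thus $L_{\mathscr{C}_{\mathbb{Z}}}$ is an inverse subsemigroup. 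The monomorphism $\mathfrak{h}\colon L_{\mathscr{C}_{\mathbb{Z}}}\to\mathbb{Z}$ of Proposition~\ref{proposition-10}$(x)$ forces at most one idempotent in $L_{\mathscr{C}_{\mathbb{Z}}}$ (the preimage of $0$), so $L_{\mathscr{C}_{\mathbb{Z}}}$ is a group; it contains $1_T$ and sits inside $H(1_T)=\{1_T\}$, giving $L_{\mathscr{C}_{\mathbb{Z}}}=\{1_T\}$ and therefore $T=\mathscr{C}_{\mathbb{Z}}\cup\{1_T\}$.

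For the topology, Corollary~\ref{corollary-4} makes every element of $\mathscr{C}_{\mathbb{Z}}$ isolated, so $1_T$ is the sole non-isolated point. For each $a\in\mathbb{Z}$, the continuous map $x\mapsto x\cdot(a,a)$ sends $1_T$ to the isolated point $(a,a)$, so its preimage of $\{(a,a)\}$ is open in $T$; a routine computation using~(\ref{f1}) identifies this preimage as $\{1_T\}\cup\{(n,n)\mid n\leqslant a\}$. Combining Lemma~\ref{lemma-8}$(i)$ with local compactness, choose a compact neighbourhood $V$ of $1_T$ with $V\cap\mathscr{C}_{\mathbb{Z}}\subseteq E(\mathscr{C}_{\mathbb{Z}})$. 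The set $\{(n,n)\mid n\geqslant 0\}$ is open (union of isolated points) and its complement contains the open neighbourhood $\{1_T\}\cup\{(n,n)\mid n\leqslant -1\}$ of $1_T$, so it is clopen; hence $V\cap\{(n,n)\mid n\geqslant 0\}$ is compact and discrete, therefore finite, and removing this finite set yields a smaller compact neighbourhood (still called $V$) with $V\cap\mathscr{C}_{\mathbb{Z}}\subseteq\{(n,n)\mid n\leqslant -1\}$.

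Set $M=\{n\mid(n,n)\in V\}$. Since $1_T$ is non-isolated, each intersection $V\cap\bigl(\{1_T\}\cup\{(n,n)\mid n\leqslant a\}\bigr)$ is again a neighbourhood of $1_T$, hence infinite, so $M\cap(-\infty,a]$ is infinite for every $a\in\mathbb{Z}$; enumerating $M$ in decreasing order yields a sequence $m_1>m_2>\cdots$ of negative integers with $m_i\to-\infty$. Because $V\setminus\{1_T\}$ is discrete with unique accumulation point $1_T$ in the compact Hausdorff $V$, the space $V$ is the one-point compactification of $\{(m_i,m_i)\}_{i\in\mathbb{N}}$, so the sets $U_k(1_T)=\{1_T\}\cup\{(m_i,m_i)\mid i\geqslant k\}$ form a neighbourhood base at $1_T$ inside $V$, and since $W\cap V$ is a neighbourhood of $1_T$ in $V$ for every neighbourhood $W(1_T)$ in $T$, they form a base in $T$ as well. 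The bijection $T\to S_1$ acting as the identity on $\mathscr{C}_{\mathbb{Z}}$ and sending $1_T\mapsto e_1$ is then the required topological isomorphism onto $(S_1,\tau_1)$. The main technical obstacle is precisely this passage from local compactness to the one-point-compactification form of the neighbourhood base at $1_T$: it requires proving $M$ is unbounded below (via the density of $\mathscr{C}_{\mathbb{Z}}$ and the openness of the sets $\{1_T\}\cup\{(n,n)\mid n\leqslant a\}$) and can be arranged to lie below $-1$ (via the clopen argument), so that the resulting sequence is exactly of the type underlying Example~\ref{example-11}.
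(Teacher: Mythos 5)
Your proposal is correct and follows essentially the same route as the paper: Lemma~\ref{lemma-8}$(i)$ together with local compactness produces a compact neighbourhood of $1_T$ meeting $\mathscr{C}_{\mathbb{Z}}$ only in negative diagonal idempotents, and the tails of the resulting decreasing sequence $\{(m_i,m_i)\}$ form a neighbourhood base at $1_T$, which yields the topological isomorphism with $(S_1,\tau_1)$. The differences are only presentational: you spell out the reduction $T\setminus\mathscr{C}_{\mathbb{Z}}=\{1_T\}$ via Proposition~\ref{proposition-10} (which the paper simply asserts from its hypotheses), and you replace the paper's appeals to Theorem~1.7 of \cite{CHK} and to a countable nested base of compact-open sets by a direct one-point-compactification argument inside the chosen compact neighbourhood.
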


\begin{proof}
By the assumption of the theorem we have that $T\setminus
\mathscr{C}_{\mathbb{Z}}=\left\{1_T\right\}$. Then
Lemma~\ref{lemma-8}$(i)$ implies that there exists a base
$\mathscr{B}(1_T)$ of the topology $\tau$ at the unit $1_T$ such
that $U(1_T)\subseteq E(\mathscr{C}_{\mathbb{Z}})$ for any
$U(1_T)\in\mathscr{B}(1_T)$. Also statements $(c)$ and $(d)$ of
Theorem~1.7 from \cite[Vol.~1]{CHK} imply that we can assume that
$(n,n)\in U(1_T)$ if and only if $n$ is a negative integer. Since by
Corollary~\ref{corollary-4} every element of the semigroup
$\mathscr{C}_{\mathbb{Z}}$ is an isolated point of $T$, without loss
of generality we can assume that all elements of the base
$\mathscr{B}(1_T)$ are closed-and-open subsets of $T$. Also, the
local compactness of $T$ implies that without loss of generality we
can assume that the base $\mathscr{B}(1_T)$ consists of compact
subsets, and Corollary~3.3.6 from \cite{Engelking1989} implies that
the base $\mathscr{B}(1_T)$ is countable.

We suppose that $\mathscr{B}(1_T)=\left\{U_n(1_T)\mid
n=1,2,3,\ldots\right\}$. We put
\begin{equation*}
    W_1(1_T)=U_1(1_T) \qquad \hbox{ and } \qquad
    W_i(1_T)=W_{i-1}(1_T)\cap U_i(1_T),
\end{equation*}
for all $i=2,3,4,\ldots$. We observe that
$\widetilde{\mathscr{B}}(1_T)=\left\{W_n(1_T)\mid
n=1,2,3,\ldots\right\}$ is a base of the topology $\tau$ at the unit
$1_T$ of $T$ such that $W_{n+1}(1_T)\subsetneqq W_n(1_T)$ for every
positive integer $n$. Then the compactness of $U_i(1_T)$,
$i=1,2,3,\ldots$, and the discreteness of the space
$\mathscr{C}_{\mathbb{Z}}$ imply that the family
$\widetilde{\mathscr{B}}(1_T)$ consists of compact-and-open subsets
of $T$. Let $\left\{m_i\right\}_{i\in\mathbb{N}}$ be a decreasing
sequence of negative integers such that
$\displaystyle\bigcup_{i=1}^\infty\left\{(m_i,m_i)\right\}=
W_1(1_T)\setminus\left\{ 1_T\right\}$. We put
$V_n=\left\{1_T\right\}\cup
\left\{(m_i,m_i)\in\mathscr{C}_{\mathbb{Z}}\mid i\geqslant
n\right\}$ for every positive integer $n$. Since every element of
the family $\widetilde{\mathscr{B}}(1_T)$ is a compact subset of
$T$, Corollary~\ref{corollary-4} implies that the family
\begin{equation*}
    \overline{\mathscr{B}}(1_T)=
    \left\{V_n\mid n=1,2,3,\ldots\right\}
\end{equation*}
is a base of the topology $\tau$ at $1_T$ of $T$ and this completes
the proof of our theorem.
\end{proof}

Theorems~\ref{theorem-16} and ~\ref{theorem-18} imply the following:

\begin{corollary}\label{corollary-19}
Let $(T,\tau)$ be a Hausdorff locally compact topological inverse
semigroup. If $\mathscr{C}_{\mathbb{Z}}$ is a dense subsemigroup of
$T$ such that the group of units of $T$ is singleton, then there
exists a decreasing sequence of negative integers
$\{m_i\}_{i\in\mathbb{N}}$ such that $(T,\tau)$ is topologically
isomorphic either to the semigroup $(S_1,\tau_1)$ from
Example~\ref{example-11} or to the semigroup $(S_4,\tau_4)$ from
Example~\ref{example-14}.
\end{corollary}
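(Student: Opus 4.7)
The plan is to dichotomize on whether the ideal $I := T \setminus {\uparrow}\mathscr{C}_{\mathbb{Z}}$ is empty, identifying $T$ with $(S_1,\tau_1)$ or $(S_4,\tau_4)$ accordingly by combining Theorem~\ref{theorem-16} (for the structure of $\mathscr{C}_{\mathbb{Z}} \cup I$ when $I \neq \varnothing$) with Theorem~\ref{theorem-18} (for the topology at $1_T$). The case $I = \varnothing$ is immediate: then ${\uparrow}\mathscr{C}_{\mathbb{Z}} = T$ and Theorem~\ref{theorem-18} applies verbatim to produce a decreasing sequence $\{m_i\}_{i\in\mathbb{N}}$ of negative integers and a topological isomorphism $(T,\tau) \cong (S_1,\tau_1)$.

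Suppose now $I \neq \varnothing$. Theorem~\ref{theorem-16} identifies $I$ with the discrete additive group of integers and the subsemigroup $\mathscr{C}_{\mathbb{Z}} \cup I$ with $(S_3,\tau_3)$. Since $1_T \notin I$ (as $I$ is an ideal) and $1_T \notin \mathscr{C}_{\mathbb{Z}}$ (the extended bicyclic semigroup has no identity), the unit sits in $L_{\mathscr{C}_{\mathbb{Z}}} := {\uparrow}\mathscr{C}_{\mathbb{Z}} \setminus \mathscr{C}_{\mathbb{Z}}$, and the crux of the argument is to show $L_{\mathscr{C}_{\mathbb{Z}}} = \{1_T\}$. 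I would first verify that $L_{\mathscr{C}_{\mathbb{Z}}}$ is stable under the inversion of $T$: whenever $x \cdot (n,n) = (p,q) \in \mathscr{C}_{\mathbb{Z}}$, taking inverses gives $(n,n) \cdot x^{-1} = (q,p) \in \mathscr{C}_{\mathbb{Z}}$, so $x^{-1} \in R_{\mathscr{C}_{\mathbb{Z}}} = L_{\mathscr{C}_{\mathbb{Z}}}$ by Proposition~\ref{proposition-10}(iv). Since the monomorphism $\mathfrak{h} : L_{\mathscr{C}_{\mathbb{Z}}} \hookrightarrow \mathbb{Z}$ of Proposition~\ref{proposition-10}(x) is multiplicative and vanishes on idempotents, $\mathfrak{h}(x^{-1}) = -\mathfrak{h}(x)$; hence the image of $\mathfrak{h}$ is a subgroup of $\mathbb{Z}$, so $L_{\mathscr{C}_{\mathbb{Z}}}$ is itself a group whose identity is its unique idempotent $1_T$. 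This group is then necessarily $H(1_T) = \{1_T\}$, which is the only real obstacle in the argument.

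It remains to read off the topology at $1_T$ and check that gluing it to the $(S_3,\tau_3)$ topology on $\mathscr{C}_{\mathbb{Z}} \cup I$ reproduces $\tau_4$. Since $I$ is closed in $T$ by Proposition~\ref{proposition-10}(xi) and $1_T \notin I$, the construction of Theorem~\ref{theorem-18} transfers verbatim to the open submonoid $T \setminus I = \mathscr{C}_{\mathbb{Z}} \cup \{1_T\}$: using Lemma~\ref{lemma-8}(i), local compactness, and the isolation of every $(n,n)$, I would extract a countable base $\{V_n\}$ at $1_T$ of compact-and-open sets of the form $V_n = \{1_T\} \cup \{(m_i,m_i) : i \geq n\}$ for some decreasing sequence $\{m_i\}$ of negative integers. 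Juxtaposing this base with the $(S_3,\tau_3)$ topology on $\mathscr{C}_{\mathbb{Z}} \cup I$ produces precisely the topology $\tau_4$ of Example~\ref{example-14}, giving $(T,\tau) \cong (S_4,\tau_4)$ as required.
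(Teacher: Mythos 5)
Your proof is correct and follows exactly the route the paper intends: Corollary~\ref{corollary-19} is presented as an immediate consequence of Theorems~\ref{theorem-16} and~\ref{theorem-18}, split according to whether $I=T\setminus{\uparrow}\mathscr{C}_{\mathbb{Z}}$ is empty or not. The extra verifications you supply --- that $L_{\mathscr{C}_{\mathbb{Z}}}$ is closed under inversion, hence (via the monomorphism into $\mathbb{Z}$) a subgroup with identity $1_T$ and therefore equal to $H(1_T)=\{1_T\}$, and that Theorem~\ref{theorem-18} applies to the open submonoid $T\setminus I=\mathscr{C}_{\mathbb{Z}}\cup\{1_T\}$ before gluing with the $(S_3,\tau_3)$ description from Theorem~\ref{theorem-16}$(iv)$ --- are precisely the details the paper leaves implicit, and they check out.
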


\begin{theorem}\label{theorem-20}
Let $(T,\tau)$ be a Hausdorff locally compact topological inverse
monoid with unit $1_T$. Suppose that $\mathscr{C}_{\mathbb{Z}}$ is a
dense subsemigroup of $T$ such that the following conditions hold:
\begin{itemize}
  \item[$(i)$] ${\uparrow}\mathscr{C}_{\mathbb{Z}}=T$;
  \item[$(ii)$] the group of units $H(1_T)$ of $T$ is non-singleton;
   \; and
  \item[$(iii)$] there exists an integer $j$ such that
   $K=\left\{1_T\right\}\cup\left\{(i,i)\in\mathscr{C}_{\mathbb{Z}}
   \mid i\geqslant j\right\}$ is a compact subset of $T$.
\end{itemize}
Then there exists a decreasing sequence of negative integers
$\{m_i\}_{i\in\mathbb{N}}$ such that $m_{i+1}=m_i-1$ for every
positive integer $i$ and $(T,\tau)$ is topologically isomorphic to
the semigroup $(S_2,\tau_2)$ for $n=1$ from
Example~\ref{example-12}.
\end{theorem}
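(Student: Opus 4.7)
The plan is to parallel the proof of Theorem~\ref{theorem-18}, folding in the non-trivial group of units $H(1_T)$ and the compactness furnished by hypothesis~(iii). Condition~(i) together with Proposition~\ref{proposition-10}$(xii)$ gives $T=\mathscr{C}_{\mathbb{Z}}\sqcup H(1_T)$; Lemma~\ref{lemma-8}$(vi)$ identifies $H(1_T)$ with a discrete infinite cyclic subgroup of $T$, and the monomorphism $\varphi\colon H(1_T)\to\mathbb{Z}$ of that lemma has image $k\mathbb{Z}=G_1(k)$ for a unique positive integer $k$. I would identify each $y\in H(1_T)$ with $-\varphi(y)\in G_1(k)$. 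Shrinking a neighbourhood $V(y)$ by Proposition~\ref{proposition-10}$(iii)$ so that every $(n,m)\in V(y)\cap\mathscr{C}_{\mathbb{Z}}$ has $n-m=\varphi(y)$ with $n,m$ bounded above by an integer that can be taken arbitrarily negative, formula~(\ref{f1}) collapses to $(n,m)\cdot(a,b)=(\varphi(y)+a,b)$ and $(a,b)\cdot(n,m)=(a,b-\varphi(y))$; continuity of multiplication then forces $y\cdot(a,b)=(-ki+a,b)$ and $(a,b)\cdot y=(a,b+ki)$ when $y\leftrightarrow ki$. This exhibits an algebraic isomorphism $T\cong S_2$ for this $k$.

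Next I would pin down the topology at $1_T$. Lemma~\ref{lemma-8}$(i)$ supplies a neighbourhood $U(1_T)$ with $U(1_T)\cap\mathscr{C}_{\mathbb{Z}}\subseteq E(\mathscr{C}_{\mathbb{Z}})$; combined with Proposition~\ref{proposition-1}$(i)$ and Theorem~1.7$(c),(d)$ of~\cite[Vol.~1]{CHK}, $U(1_T)$ can be shrunk so that only idempotents with negative index remain. Local compactness, Corollary~3.3.6 from~\cite{Engelking1989}, and the intersection construction used in the proof of Theorem~\ref{theorem-18} produce a strictly nested countable base $\widetilde{\mathscr{B}}(1_T)=\{W_r(1_T)\}_{r\in\mathbb{N}}$ of compact-and-open neighbourhoods of $1_T$, and listing $W_1(1_T)\setminus\{1_T\}$ in decreasing order of index yields a strictly decreasing sequence $\{m_i\}_{i\in\mathbb{N}}$ of negative integers with $W_1(1_T)=\{1_T\}\cup\{(m_i,m_i)\mid i\in\mathbb{N}\}$.

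The principal obstacle, and the new ingredient beyond Theorem~\ref{theorem-18}, is the equality $m_{i+1}=m_i-1$, which is what forces the parameter $n$ in Example~\ref{example-12} to equal $1$. This is where hypothesis~(iii) is crucial: by Corollary~\ref{corollary-4} every $(i,i)$ is isolated in $T$, so the compact set $K$ of~(iii) has $1_T$ as its only non-isolated point, and every neighbourhood of $1_T$ therefore contains all but finitely many of the idempotents parametrised by~(iii). Hence every integer sufficiently far along the accumulation direction occurs as some $m_i$, and listing these in decreasing order produces $m_{i+1}=m_i-1$, so $W_1(1_T)\setminus\{1_T\}$ is an initial segment of a doubly-infinite block of consecutive negative idempotents.

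Finally, for $y\in H(1_T)\setminus\{1_T\}$ the right translation $\rho_y\colon x\mapsto x\cdot y$ is a homeomorphism of $T$ with continuous inverse $\rho_{y^{-1}}$, because $T$ is a topological inverse monoid. Thus $\{W_r(1_T)\cdot y\mid r\in\mathbb{N}\}$ is a base of $\tau$ at $y$, and the multiplication formula of the first paragraph rewrites this as $\{y\}\cup\{(m_i,m_i+ki)\mid i\geqslant r\}$ under $y\leftrightarrow ki$; after the substitution $q=-m_i$ (valid because $m_{i+1}=m_i-1$), these coincide with the neighbourhoods $U_j^1(ki)$ of Example~\ref{example-12}. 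Together with the algebraic isomorphism this yields the required topological isomorphism $(T,\tau)\cong(S_2,\tau_2)$ for $n=1$, completing the proof.
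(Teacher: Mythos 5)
Your proposal is correct and follows essentially the same route as the paper's proof: build a countable nested compact-open base at $1_T$ as in Theorem~\ref{theorem-18}, use the compactness hypothesis~$(iii)$ to force the consecutive idempotents $m_{i+1}=m_i-1$, identify $H(1_T)$ with a discrete cyclic group via Lemma~\ref{lemma-8}, and transport the base at $1_T$ by translations to obtain the neighbourhoods $U_j^1(ki)$ of Example~\ref{example-12}. The only (inessential) local difference is that you obtain the mixed products $y\cdot(a,b)$ and $(a,b)\cdot y$ by a direct continuity argument from Proposition~\ref{proposition-10}, where the paper computes them via the inverse-semigroup identities leading to its equalities (\ref{f-3}) and (\ref{f-4}).
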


\begin{proof}
As in the proof of Theorem~\ref{theorem-18} we construct a
decreasing sequence of negative integers $\{m_i\}_{i\in\mathbb{N}}$
such that the family
\begin{equation*}
    \mathscr{B}(1_T)=
    \left\{U_i(1_T)\mid i=1,2,3,\ldots\right\}
\end{equation*}
determines a base of the topology $\tau$ at the point $1_T$ of $T$,
where
\begin{equation*}
    U_j(1_T)=\left\{1_T\right\}\cup
   \left\{(m_i,m_i)\in\mathscr{C}_{\mathbb{Z}}\mid i\geqslant
   j\right\}.
\end{equation*}
The compactness of the set $K$ implies that we can construct a
sequence of negative integers $\{m_i\}_{i\in\mathbb{N}}$ such that
$m_{i+1}=m_i-1$ for every positive integer $i$.

Then for every element $x$ of the group of units $H(1_T)$ left and
right translations $\lambda_x\colon T\rightarrow T: s\mapsto x\cdot
s$ and $\rho_x\colon T\rightarrow T: s\mapsto s\cdot x$ are
homeomorphisms of the topological space $T$ (see \cite[Vol.~1,
P.~19]{CHK}), and hence the following families
\begin{equation*}
    \mathscr{B}_l(x)=\left\{x\cdot U_i(1_T)\mid
    U_i(1_T)\in\mathscr{B}(1_T)\right\} \qquad
\end{equation*}
     {and}
\begin{equation*}
     \qquad
    \mathscr{B}_r(x)=\left\{U_i(1_T)\cdot x\mid
    U_i(1_T)\in\mathscr{B}(1_T)\right\}
\end{equation*}
are bases of the topology $\tau$ at the point $1_T$ of $T$. Also, we
observe that the family
\begin{equation*}
    \mathscr{B}(x)=\left\{U\cap V\mid
    U\in\mathscr{B}_l(x) \hbox{ and }
    V\in\mathscr{B}_r(x)\right\}
\end{equation*}
is a base of the topology $\tau$ at the point $1_T$ of $T$.

Then Lemma~\ref{lemma-8} and Proposition~\ref{proposition-10} imply
that the group of units $H(1_T)$ of $T$ is topologically isomorphic
to the discrete additive group of integers $\mathbb{Z}_+$. Let $g$
be a generator of $\mathbb{Z}_+$. Then by Lemma~\ref{lemma-8}$(iii)$
there exist an open neighbourhood $U(g)$ of the point $g$ in $T$ and
an integer $k$ such that $a-b=k$ for all $(a,b)\in U(g)\cap
\mathscr{C}_{\mathbb{Z}}$. Without loss of generality we can assume
that $g$ is a positive integer and $k<0$. Then we have that
\begin{equation}\label{f-3}
    g\cdot U_i(1_T)=\left\{(m_i+k,m_i)\mid (m_i,m_i)\in
    U_i(1_T)\right\}\cup\left\{g\right\}
\end{equation}
and
\begin{equation}\label{f-4}
    U_i(1_T)\cdot g=\left\{(m_i,m_i-k)\mid (m_i,m_i)\in
    U_i(1_T)\right\}\cup\left\{g\right\}
\end{equation}
We shall show that equality (\ref{f-4}) holds. Let be $(m_i,m_i)\in
U_i(1_T)$. Then we get
\begin{equation*}
     \left((m_i,m_i)\cdot g\right)\cdot\left((m_i,m_i)\cdot
     g\right)^{-1}=(m_i,m_i)\cdot g\cdot g^{-1}\cdot(m_i,m_i)^{-1}=
     (m_i,m_i)\cdot 1_T\cdot(m_i,m_i)=(m_i,m_i).
\end{equation*}
Since $(m_i,m_i)\cdot g\in \mathscr{C}_{\mathbb{Z}}$ and
$\mathscr{C}_{\mathbb{Z}}$ is an inverse semigroup we conclude that
$(m_i,m_i)\cdot g=(m_i,a)$ for some integer $a$, and by
Lemma~\ref{lemma-8}$(vi)$ we have that $(m_i,m_i)\cdot
g=(m_i,m_i-k)$. This completes the proof of equality (\ref{f-4}).
The proof of equality (\ref{f-3}) is similar. Then
Lemma~\ref{lemma-8}$(vi)$, equalities (\ref{f-3}) and (\ref{f-4})
imply that $T$ is topologically isomorphic to the semigroup
$(S_2,\tau_2)$ for $n=1$ from Example~\ref{example-12}. This
completes the proof of the theorem.
\end{proof}

Theorems~\ref{theorem-16} and ~\ref{theorem-20} imply the following:

\begin{corollary}\label{corollary-21}
Let $(T,\tau)$ be a Hausdorff locally compact topological inverse
monoid with unit $1_T$. Suppose that $\mathscr{C}_{\mathbb{Z}}$ is a
dense subsemigroup of $T$ such that the following conditions hold:
\begin{itemize}
  \item[$(i)$] the group of units $H(1_T)$ of $T$ is non-singleton;
   \; and
  \item[$(ii)$] there exists an integer $j$ such that
   $K=\left\{1_T\right\}\cup\left\{(i,i)\in\mathscr{C}_{\mathbb{Z}}
   \mid i\geqslant j\right\}$ is a compact subset of $T$.
\end{itemize}
Then there exists a decreasing sequence of negative integers
$\{m_i\}_{i\in\mathbb{N}}$ such that $m_{i+1}=m_i-1$ for every
positive integer $i$ and $(T,\tau)$ is topologically isomorphic
either to the semigroup $(S_2,\tau_2)$ from Example~\ref{example-12}
or to the semigroup $(S_5,\tau_5)$ from Example~\ref{example-15}.
\end{corollary}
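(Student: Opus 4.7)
By Proposition~\ref{proposition-10}$(xi)$, the set ${\uparrow}\mathscr{C}_{\mathbb{Z}}$ is open in $T$, so $I := T \setminus {\uparrow}\mathscr{C}_{\mathbb{Z}}$ is either empty or a closed ideal, and the proof splits accordingly.

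If $I = \varnothing$, i.e., ${\uparrow}\mathscr{C}_{\mathbb{Z}} = T$, then the hypotheses of Theorem~\ref{theorem-20} hold verbatim and produce the decreasing sequence $\{m_i\}_{i\in\mathbb{N}}$ with $m_{i+1} = m_i - 1$ together with a topological isomorphism $(T,\tau) \cong (S_2,\tau_2)$ for $n=1$.

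Suppose $I \neq \varnothing$. Theorem~\ref{theorem-16} identifies the subsemigroup $S := \mathscr{C}_{\mathbb{Z}} \cup I$ topologically with $(S_3,\tau_3)$, realizing $I$ as the discrete additive group $G_0 = \mathbb{Z}$ with unit $\overline{e}$, and Proposition~\ref{proposition-10}$(xii)$ yields the set decomposition $T = S \sqcup H(1_T)$. The plan is now to transplant the argument of Theorem~\ref{theorem-20} into the present setting: condition $(ii)$ combined with Lemma~\ref{lemma-8}$(i)$ produces a countable compact-and-open neighborhood base at $1_T$ of the form $U_j(1_T) = \{1_T\} \cup \{(m_i,m_i) \mid i \geqslant j\}$ with $\{m_i\}$ a decreasing sequence of negative integers; the compactness of $K$ forces $m_{i+1} = m_i - 1$ (so we are in the case $n = 1$ of Example~\ref{example-12}), and Lemma~\ref{lemma-8} together with the equalities (\ref{f-3})–(\ref{f-4}) identifies $H(1_T) \cup \mathscr{C}_{\mathbb{Z}}$ topologically with the $S_2$-piece of $S_5$ at a chosen generator $g \in H(1_T)$.

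It then remains to verify two points: (a) the mixed products satisfy $(ki)^1 \cdot (n)^0 = (n)^0 \cdot (ki)^1 = (n+ki)^0$ as in Example~\ref{example-15}, and (b) the $(S_3,\tau_3)$-neighborhood base at each $n \in I$ persists as a base in the ambient space $T$. For (a), I would approximate $g \in H(1_T)$ by a net $(a_\alpha,b_\alpha) \in \mathscr{C}_{\mathbb{Z}}$ with constant difference $b_\alpha - a_\alpha$ equal to the integer representing $g$ (Lemma~\ref{lemma-8}$(vi)$) and use joint continuity together with the $(S_3)$-rule $(a,b) \cdot n = n + b - a$ to compute $g \cdot n = g + n \in I$, and symmetrically for $n \cdot g$. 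For (b), the key observation is that no net in $H(1_T)$ can converge to any $n \in I$: if $x_\alpha \to n$ then joint continuity of inversion in the topological inverse monoid $T$ forces $x_\alpha^{-1} \to n^{-1} \in I$ and hence $1_T = x_\alpha \cdot x_\alpha^{-1} \to n \cdot n^{-1} = \overline{e}$, contradicting the Hausdorff separation $1_T \neq \overline{e}$; consequently the base from $(S_3,\tau_3)$ at $n$ remains a base at $n$ in $T$. The main obstacle is precisely this final gluing step: confirming that the three local topological descriptions — discreteness on $\mathscr{C}_{\mathbb{Z}}$ (Corollary~\ref{corollary-4}), the $(S_3,\tau_3)$-structure on $I$ (Theorem~\ref{theorem-16}), and the $(S_2,\tau_2)$-structure around $H(1_T)$ (the Theorem~\ref{theorem-20}-style construction) — assemble consistently into precisely the topology $\tau_5$ of Example~\ref{example-15}.
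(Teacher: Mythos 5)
Your proposal is correct and takes essentially the same route as the paper: the paper gives no separate proof of this corollary, deriving it directly from Theorems~\ref{theorem-16} and~\ref{theorem-20} via exactly your case split $I=T\setminus{\uparrow}\mathscr{C}_{\mathbb{Z}}=\varnothing$ (apply Theorem~\ref{theorem-20}) versus $I\neq\varnothing$ (combine Theorem~\ref{theorem-16} with the Theorem~\ref{theorem-20}-style analysis of $H(1_T)$ to obtain $(S_5,\tau_5)$). The extra verifications you supply for the gluing --- the mixed products $H(1_T)\cdot I$ computed by approximating a unit with a constant-difference net from $\mathscr{C}_{\mathbb{Z}}$, and the fact that $H(1_T)$ cannot accumulate at $I$ because $x_\alpha\to n$ would force $1_T=x_\alpha x_\alpha^{-1}\to n n^{-1}=\overline{e}$ --- are sound and merely make explicit what the paper leaves implicit.
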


\end{document}